\numberwithin{equation}{subsection}
\theoremstyle{plain}
\newtheorem{theorem}[subsection]{Theorem}
\newtheorem{proposition}[subsection]{Proposition}
\newtheorem{lemma}[subsection]{Lemma}
\newtheorem{corollary}[subsection]{Corollary}
\theoremstyle{definition}
\newtheorem{definition}[subsection]{Definition}
\newtheorem{instance}[subsection]{Example}
\newtheorem{situation}[subsection]{}
\author{Dingxin Zhang}
\title{The Monsky--Washnitzer topos}
\begin{document}

\maketitle

Monsky and Washnitzer defined a good ``\(p\)-adic'' cohomology theory for smooth
affine varieties~\cite{monsky-washnitzer:formal-cohomology-1} over a field
of characteristic \(p > 0\). For arbitrary \(k\)-varieties, Berthelot defined
another cohomology theory known as the rigid cohomology, which agrees with the
cohomology of Monsky--Washnitzer for smooth affine \(k\)-varieties. The original
definition of rigid cohomology was not site-theoretic. For a proper
\(k\)-variety \(X\), Ogus defined a ``convergent
site''~\cite{ogus:convergent-topos}, and the cohomology of a certain sheaf on
this site agrees with the rigid cohomology of \(X\). For arbitrary \(X\),
Le~Stum had defined an ``overconvergent
site''~\cite{lestum:overconvergent}, and cohomology of a sheaf on this site
computes the rigid cohomology of \(X\).

In this paper, we define, for any
\(k\)-variety \(X\), another site, the \emph{Monsky--Washnitzer site} of
\(X\). We prove the cohomology of a sheaf on this site also computes the
a certain analytic cohomology of \(X\). According to Große-Klönne~\cite[Theorem~5.1(c)]{grosse-klonne:rigid-analytic-spaces-with-overconvergent-structure-sheaf},
these analytic cohomology groups agree with rigid cohomology groups.

Comparing with Le~Stum's overconvergent site,
we feel the Monsky--Washnitzer site is more related to the construction of
Monsky--Washnitzer (whereas Le~Stum's site is more related to Berthelot's
original theory).

\medskip{}
The idea is to mimic the every construction made by Ogus in his convergent
theory~\cite{ogus:convergent-topos} (also Shiho's logarithmic
version of Ogus's theory~\cite{shiho:crystalline-fundamental-group-2}), with
formal schemes replaced by weak formal schemes defined by
Meredith~\cite{meredith:weak-formal-scheme}.

According to~\cite[\S2.3]{grothendieck:crystals-de-rham-cohomology},
it seems such a
theory has been perceived by Monsky and Washnitzer. Although Grothendieck
commented that the method of Monsky--Washnitzer is ``too closely bound to
differential forms, which practically limits its applications to smooth
schemes'', the difficulty is overcome by embedding the scheme in question into a
smooth one. the method of differential forms can then be extended to possibly
singular schemes, just like Grothendieck and Berthelot did to crystalline
cohomology, or Ogus did for convergent cohomology.

Most of results in the paper have parallel versions in the convergent topos, and
the proofs of these do not contain surprise. One exception is that the ``Theorem
B'' for quasi-Stein dagger spaces cannot be deduced from the original version by
the standard argument (it is a much harder problem). In the special situation we
need, such a theorem is made available by
Bambozzi~\cite{bambozzi:theorem-b-dagger-quasi-stein}.

The main result of this note is that the cohomology of the Monsky--Washnitzer
topos of a certain natural sheaf agrees with an ``analytic cohomology'', which in
turn agrees with rigid cohomology of Berthelot, see
Corollary~\ref{corollary:comparison}. It is also possible to treat twisted
coefficients, but we do not pursue this further.

\medskip{}\noindent
\textbf{Notation.}
Throughout this paper we fix the following notation.

\begin{itemize}
\item Let \(k\) be a field of characteristic \(p > 0\).
\item Let \(V\) be a complete discrete valuation ring of characteristic \(0\) with a uniformizer \(\varpi\) such that \(V/\varpi V = k\).
\item Let \(K\) be the fraction field of \(V\), equipped with the standard absolute
value \(|\cdot|\) such that \(|p| = p^{-1}\).
\item Let \(K^{\text{a}}\) be an algebraic closure of \(K\).
\item For an extension \(L\) of \(K\) with an absolute
value \(|\cdot|\) extending the one on \(K\), we use \(\mathcal{O}_{L}\) to
denote the valuation ring of \(L\), i.e., \(\{x \in L : |x| \leq 1\}\). In
particular \(V = \mathcal{O}_K\).
\item For each \(m\)-tuple of positive numbers
\(\lambda = (\lambda_1,\ldots,\lambda_m)\),
\(\lambda_i\in |K^\times| \otimes \mathbb{Q}\), we set
\(|\lambda| = \max \lambda_i\), and
\begin{equation*}
\mathfrak{T}_m(\lambda) = \left\{\sum_{\substack{I = {i_1,\ldots,i_m} \\ i_1, \ldots i_m \geq 0}}
a_I t_1^{i_1}\cdots t_m^{i_m} \in K [\![t_1,\ldots,t_m]\!]:
|a_{I}| \lambda_i^{i_1} \cdots \lambda_n^{i_m} \to 0 \right\}.
\end{equation*}
Then \(\mathfrak{T}_m(1,\ldots,1)\), which will simply be denoted by
\(\mathfrak{T}_m\), is just the usual Tate algebra
\(K\langle t_1,\ldots,t_m\rangle\).
\item If \(X\) is a site, \(\textit{Shv}(X)\) or \(X^{\sim}\) are used to denote the
sheaf topos of \(X\).
\end{itemize}

\medskip{}\noindent
\textbf{Acknowledgment.} I thank Shizhang Li, who taught me rigid analytic geometry in his unique
and refreshing way, and answered many of my questions when I was preparing the
manuscript. I also thank him for his constant persuasion that spurred me to
finish the project. I am grateful to B.~Lian, T.-J.~Lee, A.~Huang and
S.-T.~Yau, and Yang~Zhou.

\section{Weak formal schemes}
\label{sec:orgcaff186}

We shall review the notion of weakly complete, finitely generated
algebras à la Monsky--Washnitzer and the notion of weak formal schemes à la
Meredith.
\begin{situation}
\label{situation:affinoid-vs-dagger-algebras}
Let \(K\langle t_1,\ldots, t_n \rangle\) be the Tate algebra of restricted power
series of \(n\)-variables with coefficients in \(K\):
\begin{equation*}
K\langle t_1,\ldots, t_n \rangle = \{f \in K [\![t]\!] : |f(a_1,\ldots,a_n)| < \infty,
\forall (a_1,\ldots,a_n) \in \mathcal{O}_{K^{\text{a}}}^{n}\}.
\end{equation*}
Thus \(K\langle t_1,\ldots, t_n\rangle\) is the ring of ``analytic functions''
defined over \(K\), that are convergent on the unit polydisk in
\(K^{\text{a}}\). We equip \(K\langle t_1,\ldots, t_n \rangle\) the \(p\)-adic
topology. We use \(V\langle t_1,\ldots, t_n\rangle\) to denote the ring of power
bounded elements (which are precisely power series with coefficients in \(V\))
in \(K\langle t_1,\ldots, t_n \rangle\).
An \emph{affinoid} algebra is a homomorphic image of a Tate algebra.
Basic properties of affnoid algebras are recorded nicely
in~\cite{bosch:formal-rigid}; in our exposition we assume some familiarity
with these notions.

We say \(f \in K\langle t_1, \ldots, t_n \rangle\) is \emph{overconvergent} if there
exists a number \(\rho > 1\) (depending on \(f\)) such that
\begin{equation*}
|f(a_1,\ldots,a_n)| < \infty, \quad \forall (a_1,\ldots,a_n) \in K^n,  |a_i| \leq \rho.
\end{equation*}
Clearly all overconvergent elements in \(K\langle t_1,\ldots, t_n\rangle\) form
dense subring of \(K\langle t_1,\ldots, t_n \rangle\), which is called the
\emph{Monsky--Washnitzer algebra}, and is denoted by
\(K\langle t_1,\ldots,t_n \rangle^{\dagger}\). We also set
\begin{equation*}
V\langle t_1,\ldots,t_n\rangle^{\dagger} = K\langle t_1,\ldots,t_n \rangle^{\dagger}\cap V\langle t_1,\ldots,t_n \rangle
\end{equation*}
and call it the \emph{integral Monsky--Washnitzer algebra}, and is usually for
convenience denoted by \(W_{m}(K)\) or just \(W_m\).
A \emph{dagger algebra} is a homomorphic image of a Monsky--Washnitzer algebra. \label{org6d45d32}

Dagger algebras admit two different topologies, the topology inherited
from the residual norm of a presentation (which is called the \emph{affinoid topology} in the sequel),
and a colimit topology. The latter is defined as follows: write \(A = W_m/I\), where
\(I\) is generated by elements in \(\mathfrak{T}_m(\rho_0)\) for some \(\rho_0 > 1\),
then we can write \(A=\mathrm{colim}_{1<\rho<\rho_0}\mathfrak{T}_m(\rho)/I\mathfrak{T}_m(\rho)\).
Thereby \(A\) receives a colimit topology (which is called the ``fringe topology''
by K.~S.~Kedlaya) from this presentation. Both the
affinoid topology and the colimit topology of \(A\) is independent of the choice
of the presentation. We will mostly use the affinoid topology, henceforth
referred to as \emph{the} topology; when the colimit topology is used we will explicitly
specify.

Let \(A\) be a dagger algebra. Then its affinoid topology is not complete (i.e., there
exists Cauchy sequence that is not convergent). The \emph{completion} \(\widehat{A}\)
of \(A\) can be characterized as follows:
\end{situation}

\begin{lemma}
\label{lemma:completion-vs-presentation}
If \(A = W_m/(f_1,\ldots,f_r)\) be a (presented) dagger algebra.
Then the completion of \(A\) is given by
\(\widehat{A} = K\langle t_1,\ldots,t_m\rangle / (f_1,\ldots,f_r)\).
\end{lemma}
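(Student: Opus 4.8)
The plan is to deduce the statement from two inputs: the density of the Monsky--Washnitzer algebra in the Tate algebra, recorded in Situation~\ref{situation:affinoid-vs-dagger-algebras}, and the closedness of finitely generated ideals in the Tate algebra. I would write \(B = K\langle t_1,\ldots,t_m\rangle\) for the Tate algebra with its Gauss norm \(|\cdot|\), let \(B_0 \subseteq B\) denote the Monsky--Washnitzer algebra \(W_m\) (so that \(A = B_0/I\) carries the residual topology, i.e.\ the quotient norm), and set \(I = (f_1,\ldots,f_r)B_0\) and \(I^{\mathrm e} = (f_1,\ldots,f_r)B\). The goal is then to identify the completion \(\widehat A\) with \(B/I^{\mathrm e}\).

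First I would isolate the soft functional-analytic fact that, for any normed ring \(G\) and ideal \(H \subseteq G\), the completion of \(G/H\) in the quotient norm is canonically \(\widehat G/\overline H\), where \(\overline H\) is the closure of the image of \(H\) inside \(\widehat G\). The only thing requiring verification is that the natural map \(G/H \to \widehat G/\overline H\) is isometric with dense image onto the (complete) target: density is immediate from the density of \(G\) in \(\widehat G\), and the isometry follows from the elementary observation that the distance from a point to \(H\) equals its distance to the closure \(\overline H\).

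Applying this with \(G = B_0\) and \(H = I\), it then remains to identify the two ingredients. Since \(B_0\) is dense in \(B\) and \(B\) is complete (being a Tate algebra), one has \(\widehat{B_0} = B\). For the closure, multiplication by \(f_i\) is continuous, so \(\overline{B_0 f_i}\) contains \(B f_i\); hence the closure \(\overline I\) of \(I\) in \(B\) contains \(\sum_i B f_i = I^{\mathrm e}\), while \(I \subseteq I^{\mathrm e}\) gives \(\overline I \subseteq \overline{I^{\mathrm e}}\). Combining these yields \(\overline I = I^{\mathrm e}\) as soon as \(I^{\mathrm e}\) is closed in \(B\), and then \(\widehat A = \widehat{B_0}/\overline I = B/I^{\mathrm e} = K\langle t_1,\ldots,t_m\rangle/(f_1,\ldots,f_r)\), as asserted.

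The hard part is the single nonformal input, namely that the finitely generated ideal \(I^{\mathrm e} = (f_1,\ldots,f_r)\) is closed in the Tate algebra \(B\) (which is also what guarantees \(B/I^{\mathrm e}\) is complete). This is not elementary, but it is a standard result of affinoid geometry---every ideal of an affinoid algebra is closed, cf.~\cite{bosch:formal-rigid}---and I would simply invoke it. Everything else in the argument is formal manipulation of quotient norms, so this closedness statement is the only place where genuine rigid-analytic input enters.
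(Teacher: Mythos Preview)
Your proof is correct and follows essentially the same route as the paper's: both rely on the single nontrivial input that ideals of affinoid algebras are closed (Bosch, (2.3/8)), then use density of $W_m$ in the Tate algebra to conclude. Your packaging via the general lemma $\widehat{G/H}\cong\widehat G/\overline H$ and the explicit verification that $\overline I = I^{\mathrm e}$ is just a more careful unpacking of what the paper compresses into the one-line assertion that $A \to K\langle t_1,\ldots,t_m\rangle/(f_1,\ldots,f_r)$ is injective with dense image; in particular your isometry check fills in what the paper leaves implicit.
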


\begin{proof}
The algebra \(K\langle t_1,\ldots,t_m\rangle / (f_1,\ldots,f_r)\), being a
quotient of the Tate algebra by an ideal (necessarily
closed~\cite[(2.3/8)]{bosch:formal-rigid}), is a complete Banach algebra.
The assertion then follows from the fact that the natural map
\(A \to K\langle t_1,\ldots,t_m\rangle / (f_1,\ldots,f_r)\) is injective and has
dense image.
\end{proof}

We would like to record the universal property of the Monsky--Washnitzer algebra.
Recall that an element \(a \in A\) in a topological \(k\)-algebra is \emph{power bounded} if for
any neighborhood \(U\) of \(1_A\), there exists a neighborhood \(V\) of \(1_A\),
such that \(V \cdot \{a^n: n\in \mathbb{N}\}\) is contained in \(U\). An element
\(a\in A\) is \emph{weakly power bounded} if \(\lambda a\) is power bounded for some
\(\lambda \in K\), \(|\lambda| > 1\). The set of
power bounded elements is denoted by \(A^{\circ}\). The set of weakly power
bounded elements in \(A\) is denoted by \(A^{\diamond}\). Let \(A\) be a dagger
algebra with completion \(\widehat{A}\), then it is not hard to see that
\(A^{\circ} = \widehat{A}^{\diamond}\).

If \(A\) is an affinoid algebra over \(K\), then \(A^{\circ}\) is the subring of
\(A\) consisting of elements whose supremum seminorm is less than or equal to
\(1\).

\begin{lemma}
\label{lemma:universal-property-rational-mw-algebra}
Let \(R\) be a dagger \(K\)-algebra.
Let \(r_1,\ldots,r_m\) be a collection of power bounded elements in \(R\).
Then there exists a unique continuous homomorphism
\(\mathrm{ev}_{r_1,\ldots,r_m}: K\langle t_1,\ldots,t_m\rangle^{\dagger}\to R\)
such that \(t_i \mapsto r_i\). Thus, in the category of dagger algebras, the
functor \(R \mapsto (R^{\circ})^{n}\) is represented by \(W_m\).
\end{lemma}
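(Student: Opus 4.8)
The plan is to construct the homomorphism by first mapping into the completion \(\widehat R\), where the universal property of the Tate algebra is available, and then to show that overconvergence of \(f\) together with power-boundedness of the \(r_i\) forces the image back into \(R\). Uniqueness is soft and I would treat it first: any continuous \(K\)-algebra homomorphism \(K\langle t_1,\ldots,t_m\rangle^{\dagger}\to R\) is determined by its restriction to the dense subalgebra \(K[t_1,\ldots,t_m]\) (the truncations of any \(f\) converge to \(f\) already in the affinoid topology), and there it is determined by the images of the \(t_i\). As \(R\) is Hausdorff, two such homomorphisms that agree on the \(t_i\) coincide; this also pins down the candidate map as \(f\mapsto\sum_I a_I r^I\).

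For existence I would first produce a continuous homomorphism \(\hat\phi\colon K\langle t_1,\ldots,t_m\rangle\to\widehat R\) sending \(t_i\mapsto r_i\). This is the universal property of the Tate algebra over the Banach algebra \(\widehat R\); it applies because boundedness is preserved by the continuous embedding \(R\hookrightarrow\widehat R\), so the \(r_i\) land in \(\widehat R^{\circ}\), whose elements have supremum norm \(\le 1\) (as recalled before the lemma for affinoid algebras). Explicitly \(\hat\phi(f)=\sum_I a_I r^I\), which converges in \(\widehat R\) because \(|r^I|\le\prod_i(\sup_n|r_i^n|)<\infty\) is bounded in \(I\) while \(|a_I|\to 0\). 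Since the affinoid topologies on \(K\langle t\rangle^{\dagger}\) and on \(R\) are the subspace topologies from \(K\langle t\rangle\) and \(\widehat R\), the restriction \(\operatorname{ev}:=\hat\phi|_{K\langle t\rangle^{\dagger}}\) is automatically a continuous homomorphism, and the entire content of the lemma is reduced to the single claim that \(\hat\phi(f)\in R\) for overconvergent \(f\).

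Fix \(\rho>1\) with \(f\in\mathfrak{T}_m(\rho)\), so \(|a_I|\rho^{|I|}\to 0\), and write \(R=W_N/J\) with \(J\) generated in \(\mathfrak{T}_N(\rho_0)\), so that \(R=\operatorname{colim}_{1<\sigma<\rho_0}A_\sigma\) with \(A_\sigma=\mathfrak{T}_N(\sigma)/J\mathfrak{T}_N(\sigma)\). For \(\sigma\) small the \(r_i\) already lie in the step \(A_\sigma\), and it suffices to show \(\sum_I a_I r^I\) converges in the Banach algebra \(A_\sigma\), since its image in \(\widehat R\) is then \(\hat\phi(f)\), now visibly an element of \(A_\sigma\subset R\). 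Power-boundedness gives \(|r_i|_{\sup,\mathrm{Sp}\widehat R}\le 1<\rho\), and the geometric heart of the matter is that this persists on a small strict neighborhood, \(|r_i|_{\sup,A_\sigma}<\rho\) for \(\sigma\) near \(1\); granting it, \(|a_I r^I|_{\sup,A_\sigma}\le(|a_I|\rho^{|I|})(\max_i|r_i|_{\sup,A_\sigma}/\rho)^{|I|}\to 0\) geometrically. The step I expect to be the main obstacle is that this decay is measured in the spectral norm, whereas membership in \(A_\sigma\) is controlled by the residual Banach norm; I plan to bridge the two as in the classical theory, by reducing to the case that \(R\) is reduced (its nilradical is overconvergent and topologically nilpotent, so the general case follows by dévissage), for then the spectral norm on \(A_\sigma\) is an equivalent Banach norm and geometric spectral decay yields genuine convergence. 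The last non-formal ingredient is the right-continuity of \(\sigma\mapsto|r_i|_{\sup,A_\sigma}\) at \(\sigma=1\), which I would obtain from the convexity (Newton-polygon behaviour) of that function, or by a direct estimate on the coefficients of the overconvergent \(r_i\).

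Finally, the representability assertion is a repackaging of what precedes: the assignment \(\phi\mapsto(\phi(t_1),\ldots,\phi(t_m))\) is, by the previous paragraphs, a bijection from continuous dagger-\(K\)-algebra homomorphisms out of the Monsky--Washnitzer algebra onto \((R^{\circ})^{m}\), and it is natural in \(R\) because any dagger-algebra homomorphism carries power-bounded elements to power-bounded elements and commutes with evaluation. I would close by noting that this is precisely the stated universal property of \(W_m\).
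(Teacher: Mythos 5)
Your route is genuinely different from the paper's. You stay downstairs: you map into \(\widehat R\) by the universal property of the Tate algebra and then fight to show that the value of an overconvergent series lands back in \(R\), working with the fringe algebras \(A_\sigma\) of \(R\) itself and with the supremum norm on \(\mathrm{Sp}(A_\sigma)\). The paper instead goes upstairs: in a presentation \(R=W_N/J\) it picks a power-bounded preimage \(f\in W_N\) of the given element, factors the evaluation map as \(\mathfrak{T}_1\to\mathfrak{T}_N\to\widehat R\), and shows purely by Gauß-norm estimates on polydisks (continuity of \(\lambda\mapsto\) the \(\lambda\)-Gauß norm of \(f\), together with the \(1\)-Gauß norm being \(\le 1\)) that \(\mathfrak{T}_1(\rho)\) is carried into some \(\mathfrak{T}_N(\lambda)\) with \(\lambda>1\). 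The paper's device buys it exactly the thing you identify as the main obstacle --- on a polydisk the Gauß norm is simultaneously the residue norm and the spectral norm, so there is nothing to bridge --- at the price of asserting that a power-bounded element of \(R\) admits a power-bounded lift to \(W_N\), which is the same difficulty in disguise when \(R\) is not reduced. Your remaining ingredient, the right-continuity of \(\sigma\mapsto|r_i|_{\sup,A_\sigma}\) at \(\sigma=1\), is a real but standard fact (maximum principle; Große-Klönne's comparison of dagger and affinoid supremum norms).

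The one step I do not believe as written is the dévissage to the reduced case. Knowing that \(\sum_I a_I\bar r^I\) converges in a fringe algebra of \(R/N\) does not formally yield convergence in a fringe algebra of \(R\): the partial-sum differences are only controlled modulo \(N\), the terms \(a_Ir^I\) do not respect the nilpotency filtration in any useful way, and topological nilpotence constrains powers of a single element rather than the residue norms of the mixed products you must bound. The clean repair, which also removes the case distinction entirely: once \(|r_i|_{\sup,A_\sigma}\le\rho'<\rho\), the element \(r_i/c\) with \(|c|=\rho'\) has supremum norm \(\le 1\) in the affinoid algebra \(A_\sigma\), hence is power bounded there by the standard characterization of power-bounded elements of affinoid algebras (via integrality over the image of \(V\langle t\rangle\); no reducedness is needed), so \(|r_i^n|_{A_\sigma}\le C\rho'^{\,n}\) for the residue norm of \(A_\sigma\); submultiplicativity then gives
\begin{equation*}
|a_Ir^I|_{A_\sigma}\;\le\;C^{m}\bigl(|a_I|\rho^{|I|}\bigr)\bigl(\rho'/\rho\bigr)^{|I|}\longrightarrow 0,
\end{equation*}
and the series converges in the Banach algebra \(A_\sigma\), whose image in \(\widehat R\) lies in \(R\). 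With that substitution your argument is complete.
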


\begin{proof}
Let \(A\) be a dagger \(K\)-algebra.
Let \(0 \to I \to W_m \to A \to 0\) be a presentation of \(A\) by a
Monsky--Washnitzer algebra.
For each \(\lambda \in (|K^{\times}| \otimes \mathbb{Q})^{m}\),
\(|\lambda| > 1\), there is an inclusion \(\mathfrak{T}_{m}(\lambda) \to W_m\).
Denote by \(A(\lambda)\) the image of \(\mathfrak{T}_{m}(\lambda)\) in \(A\). Then
\(A = \bigcup_{|\lambda|>1} A(\lambda)\).

Let \(a \in A\) be a power bounded element. Then \(a\) induces a ring
homomorphism \(\varphi: \mathfrak{T}_1 = K\langle t \rangle \to \widehat{A}\)
sending \(t\) to \(a\), where \(\widehat{A} = \mathfrak{T}_m/I\mathfrak{T}_m\) is the affinoid algebra
attached to \(A\). For each \(\rho \in |K^{\times}| \otimes \mathbb{Q}\),
\(\rho > 1\), we must prove that \(\varphi(\mathfrak{T}_1(\rho))\) is contained in
\(A(\lambda)\) for some suitable \(\lambda\).

Let \(f(t_1,\ldots,t_m)\) be a power bounded preimage of \(a\) in
\(W_m \subset \mathfrak{T}_m\). Then there exists \(\lambda'\) such that
\(f(t_1,\ldots,t_n) \in \mathfrak{T}_m(\lambda')\), and is power bounded there. This means
that the supremum norm of \(f(t_1,\ldots,t_n)\) on the polydisk of radius
\(\lambda'\) is \(\leq 1\).

By our choice, the map \(\mathfrak{T}_1 \to \widehat{A}\) factors as
\begin{equation*}
\mathfrak{T}_1 \xrightarrow{t \mapsto f} \mathfrak{T}_m \to \widehat{A}.
\end{equation*}
Thus it suffices to prove the map \(\mathfrak{T}_1 \to \mathfrak{T}_m\) sends \(\mathfrak{T}_1(\rho)\) into
some \(\mathfrak{T}_m(\lambda)\).

Let \(g \in \mathfrak{T}_1(\rho)\) be arbitrary.
We must prove that \(g(f(t_1,\ldots,t_m))\) is
convergent on some closed disk of radius \(|\lambda| > 1\).
Since the \(\lambda'\)-Gauß norm of \(f(t_1,\ldots,t_m)\) is a continuous
function in \(\lambda'\), and since the 1-Gauß norm of \(f\) is at most \(1\)
(this is the condition that \(f\) is power bounded in \(W_m\)), it follows that
there exists \(\lambda\) such that the \(\lambda\)-Gauß norm of \(f\) is at most
\(\rho\). It follows that \(g(f(t_1,\ldots,t_m))\) falls in
\(\mathfrak{T}_m(\lambda)\): indeed, as supremum norm of \(\mathfrak{T}_m(\lambda)\) is the same as
the \(\lambda\)-Gauß norm on \(\mathfrak{T}_{m}(\lambda)\), for each point
\((c_1,\ldots,c_m) \in \mathrm{Sp}(\mathfrak{T}_m(\lambda))\), we have
\(C = |f(c_1,\ldots,c_m)| < \rho\). Thus
\(|g(f(c_1,\ldots,c_m))| = |\sum b_j C^j| < \infty\). This proves the lemma.
\end{proof}
\begin{situation}
\label{situation:weakly-complete}
\textbf{Weakly complete algebras.}
Let \(R\) be topological \(V\)-algebra with \(\varpi\)-adic topology. Recall
that \(R\) is \emph{complete} if \(R = \lim_n R/\varpi^{n}R\) (in particular, we \emph{do}
require \(R\) to be \(\varpi\)-adically separated).
In this case, for any
\(r_1, \ldots, r_n \in R\) and any \(f \in V\langle t_1,\ldots,t_n \rangle\),
we can always ``evaluate'' \(f\) at \((r_1,\ldots, r_n)\), and get back an element
\(f(r_1,\ldots, r_n) \in R\).
We say that (the \(\varpi\)-adic topology of) \(R\) is \emph{weakly complete}, if it is
\(\varpi\)-adically separated (therefore the canonical map \(R \to \widehat{R}\)
is injective), and for any finite collection of elements \(r_1,\ldots,r_n \in
R\), any \emph{overconvergent} power series
\(f \in V\langle t_1,\ldots,t_n \rangle^{\dagger}\) with coefficients in \(V\),
the element \(f(r_1,\ldots,r_n) \in \widehat{R}\) falls in \(R\).

For example, any \(\varpi\)-adically complete \(V\)-algebra is weakly complete.
The integral Monsky--Washnitzer algebra \(V\langle t_1,\ldots,t_n \rangle^{\dagger}\) is
weakly complete. The polynomial algebra \(V[t_1,\ldots,t_n]\) is not weakly
complete.

The \emph{weak completion} of a \(\varpi\)-adic \(V\)-algebra \(R\) is the smallest
subalgebra \(R^{\dagger}\) of its \(\varpi\)-adic completion \(\widehat{R}\)
that is weakly complete. For example, the weak completion of
\(V[t_1,\ldots,t_n]\) is \(V\langle t_1,\ldots,t_n \rangle^{\dagger}\). If \(R\)
is already weakly complete, then \(R = R^{\dagger}\) by fiat.

We say a subset \(S\) of a \(\varpi\)-adic \(V\)-algebra \emph{weakly generates} \(R\),
if for any \(r \in R\), there exist finitely many elements
\(s_1,\ldots, s_m \in S\) (\(m\) depends on \(r\))
and an element \(f \in V\langle t_1,\ldots, t_m\rangle^{\dagger}\), such that
\(r = f(s_1,\ldots,s_m)\). We say \(R\) is a \emph{weakly complete, finitely generated}
\(V\)-algebra if \(R\) is weakly generated by a finite subset. Hence, weakly
complete, finitely generated algebras are precisely the homomorphic images of
some integral Monsky--Washnitzer algebras. Therefore, if \(R\) is a weakly
complete, finitely generated algebra over \(V\), \(R[1/\varpi]\) is a \hyperref[org6d45d32]{dagger
\(K\)-algebra}.

To keep synchronized with the world of formal schemes, we shall say a weakly
complete, finitely generated algebra \(R\) over \(V\) is \emph{admissible} if it is
flat over \(V\). This is equivalent to saying \(R\) has no \(\varpi\)-torsion
elements.
\end{situation}

\begin{situation}
\label{situation:weak-completion-tensor-product}
\textbf{Weak completed tensor product.}
The category of weakly complete, finitely generated algebras admits tensor
products. Let \(A_1\) and \(A_2\) be two weakly complete, finitely generated
algebras over a weakly complete, finitely generated algebra \(B\) over \(V\).
Then we define \(A_1 \otimes_{B}^{w} A_2\) to be the smallest weakly
complete subalgebra  of
\(\widehat{A}_1\widehat{\otimes}_{\widehat{B}}\widehat{A}_2\) containing
the image of \(A_1 \otimes_B A_2\).
\end{situation}

\begin{lemma}
\label{lemma:completion-commutes-with-inverting}
Let \(R\) be a weakly complete, finitely generated algebra over
\(V\). Then the completion of the dagger algebra \(R[1/\varpi]\) is
\(\widehat{R}[1/\varpi]\).
\end{lemma}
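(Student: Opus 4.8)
The plan is to reduce everything to Lemma~\ref{lemma:completion-vs-presentation} by fixing a presentation of \(R\) and tracking how the two operations --- \(\varpi\)-adic completion and inverting \(\varpi\) --- interact with it. Since \(R\) is weakly complete and finitely generated, it is a homomorphic image of an integral Monsky--Washnitzer algebra, so I would choose a presentation \(0 \to \mathfrak{a} \to W_m \to R \to 0\). Inverting \(\varpi\) is an exact (flat) operation, so it gives \(R[1/\varpi] = W_m[1/\varpi]/\mathfrak{a}[1/\varpi] = K\langle t_1,\ldots,t_m\rangle^{\dagger}/I\), where \(I = \mathfrak{a}[1/\varpi]\). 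Choosing generators \(f_1,\ldots,f_r\) of \(\mathfrak{a}\) (possible by Noetherianity, see below) we thus present \(R[1/\varpi]\) as \(K\langle t\rangle^{\dagger}/(f_1,\ldots,f_r)\), and Lemma~\ref{lemma:completion-vs-presentation} identifies its affinoid completion with \(K\langle t_1,\ldots,t_m\rangle/(f_1,\ldots,f_r) = \mathfrak{T}_m/(f_1,\ldots,f_r)\).

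The core computation is then to show that the right-hand side, \(\widehat{R}[1/\varpi]\), admits the same description. Here I would use that the integral Monsky--Washnitzer algebra \(W_m\) is Noetherian~\cite{monsky-washnitzer:formal-cohomology-1}, hence so is its \(\varpi\)-adic completion \(\widehat{W_m} = V\langle t_1,\ldots,t_m\rangle\). Over a Noetherian ring the \(\varpi\)-adic completion of a finitely generated module is exact and agrees with \(-\otimes_{W_m}\widehat{W_m}\); applying this to the presentation above yields \(\widehat{R} = \widehat{W_m}/\mathfrak{a}\widehat{W_m} = V\langle t\rangle/\mathfrak{a}V\langle t\rangle\). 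Inverting \(\varpi\) once more, and using that \(\widehat{W_m}[1/\varpi] = K\langle t\rangle = \mathfrak{T}_m\), I obtain \(\widehat{R}[1/\varpi] = \mathfrak{T}_m/\mathfrak{a}\mathfrak{T}_m\).

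It remains to match the two ideals: \(\mathfrak{a}\mathfrak{T}_m\) (the ideal of \(\mathfrak{T}_m\) generated by \(\mathfrak{a} \subset W_m\)) equals \((f_1,\ldots,f_r)\mathfrak{T}_m\). This is immediate because the \(f_i\) generate \(\mathfrak{a}\) over \(W_m\) and the inclusions \(W_m \subset W_m[1/\varpi] \subset \mathfrak{T}_m\) only rescale the generators by units \(\varpi^{n}\); thus \(\mathfrak{a}\mathfrak{T}_m = I\mathfrak{T}_m = (f_1,\ldots,f_r)\). Comparing with the first paragraph gives \(\widehat{R}[1/\varpi] = \mathfrak{T}_m/(f_1,\ldots,f_r)\), which is precisely the completion of \(R[1/\varpi]\), proving the lemma.

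The step I expect to be the main obstacle is the clean commutation of \(\varpi\)-adic completion with the quotient, i.e. the identity \(\widehat{R} = \widehat{W_m}/\mathfrak{a}\widehat{W_m}\). This rests on the Noetherianity of \(W_m\) (equivalently of \(V\langle t\rangle\)), which guarantees that \(\mathfrak{a}\) is finitely generated and that the closure of \(\mathfrak{a}\widehat{W_m}\) equals \(\mathfrak{a}\widehat{W_m}\) itself, so no completed ideal intervenes. Once this is in hand, every other step is a formal consequence of the flatness of \([1/\varpi]\) and of Lemma~\ref{lemma:completion-vs-presentation}.
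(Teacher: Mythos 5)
Your proof is correct and follows essentially the same route as the paper: fix a presentation of \(R\) as a quotient of the integral Monsky--Washnitzer algebra, identify \(\widehat{R}\) with \(V\langle t_1,\ldots,t_m\rangle/(f_1,\ldots,f_r)\), and conclude via Lemma~\ref{lemma:completion-vs-presentation}. The only difference is that you justify the commutation of \(\varpi\)-adic completion with the quotient by Noetherianity and exactness of completion, whereas the paper compresses this into the one-line observation that \(R\) and \(V\langle t_1,\ldots,t_m\rangle/(f_1,\ldots,f_r)\) have the same reductions modulo \(\varpi^{N}\).
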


\begin{proof}
Let \(R = W_m^{\circ}/(f_1,\ldots,f_r)\) be a presentation of \(R\). Then the
\(\varpi\)-adic completion of \(R\) is
\(V\langle t_1,\ldots,t_m\rangle/(f_1,\ldots,f_r)\), since both algebras have the same
quotient modulo \(\varpi^{N}\). Then the present lemma follows from
Lemma~\ref{lemma:completion-vs-presentation}.
\end{proof}

\begin{lemma}
\label{lemma:finite-quotient-mw-algebra}
Let \(R\) be an admissible weakly complete, finitely generated algebra over \(V\). Then the
set of maximal ideals of \(R[1/\varpi]\) is in bijective correspondence with
\(R\)-algebras \(V'\) such that
\begin{enumerate}
\item \(R \to V'\) is surjecitve,
\item the composition \(V \to R \to V'\) is an integral extension, and
\item \(V'\) has no \(\varpi\)-torsion elements.
\end{enumerate}
\end{lemma}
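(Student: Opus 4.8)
The plan is to reduce everything to the affinoid situation and invoke the Nullstellensatz. Write \(A = R[1/\varpi]\), a dagger algebra, and let \(\widehat{A} = \widehat{R}[1/\varpi]\) be its completion, an affinoid algebra by Lemma~\ref{lemma:completion-commutes-with-inverting}. First I would recall that a dagger algebra and its affinoid completion have the same maximal spectrum: the contraction \(\mathfrak{n} \mapsto \mathfrak{n} \cap A\) is a bijection \(\mathrm{Max}(\widehat{A}) \xrightarrow{\sim} \mathrm{Max}(A)\), and at corresponding maximal ideals the residue fields coincide, \(A/(\mathfrak{n}\cap A) = \widehat{A}/\mathfrak{n} =: L\). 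Here \(L\) is a finite extension of \(K\) by the affinoid Nullstellensatz, and the equality of residue fields follows since \(A\) is dense in \(\widehat{A}\) while a finite-dimensional \(K\)-Banach space has no proper dense \(K\)-subspace. This lets me work inside \(\widehat{A}\) whenever convenient.

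For the forward map, given \(\mathfrak{m} \in \mathrm{Max}(A)\) with \(L = A/\mathfrak{m}\), I set \(V' := \mathrm{image}(R \to A/\mathfrak{m}) = R/(\mathfrak{m}\cap R)\); condition (1) then holds by construction. The key observation for (2) is that every element of \(R\) is power bounded in \(A\) --- indeed \(R \hookrightarrow \widehat{R}\), whose elements are bounded in \(\widehat{A}\) --- so its image in \(L\) lies in the valuation ring \(\mathcal{O}_L\), because the supremum seminorm dominates the absolute value at each point of \(\mathrm{Sp}(\widehat{A})\). Thus \(V \subseteq V' \subseteq \mathcal{O}_L\); as \(L/K\) is finite and \(V\) is a complete (hence Noetherian) discrete valuation ring, \(\mathcal{O}_L\) is module-finite over \(V\), so the intermediate subalgebra \(V'\) is itself module-finite, hence integral, over \(V\), giving (2). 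Condition (3) is automatic since \(V' \subseteq L\), a field of characteristic \(0\).

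For the inverse, given \(V'\) satisfying (1)--(3), I would put \(\mathfrak{q} = \ker(R \to V')\) and \(\mathfrak{m} := \mathfrak{q}A\). Condition (3) gives \(V' \hookrightarrow V'[1/\varpi] = A/\mathfrak{m}\), and since \(V'\) is a domain integral over \(V\) the finite \(K\)-algebra \(V'[1/\varpi]\) is a field, so \(\mathfrak{m}\) is maximal. I would then check the two constructions are mutually inverse: \((\mathfrak{m}\cap R)A = \mathfrak{m}\) because every element of \(\mathfrak{m} \subseteq R[1/\varpi]\) is \(\varpi^{-n}\) times an element of \(\mathfrak{m}\cap R\); and conversely \(\mathfrak{q}A \cap R = \mathfrak{q}\), precisely because condition (3) makes \(R/\mathfrak{q}\) inject into \((R/\mathfrak{q})[1/\varpi]\). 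Thus (3) is exactly what forces the round trip to close up. I expect the main obstacle to be the input from the theory of dagger spaces --- that \(\mathrm{Max}(A)\) agrees with \(\mathrm{Max}(\widehat{A})\) with matching residue fields; once this is granted, the remainder is the familiar affinoid fact that power-bounded functions take values in the valuation ring of each residue field.
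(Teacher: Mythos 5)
Your proof is correct, but it is organized differently from the paper's. Both arguments rest on the same external input, namely Gro{\ss}e-Kl\"onne's identification of \(\mathrm{Max}(R[1/\varpi])\) with \(\mathrm{Max}(\widehat{R}[1/\varpi])\) together with Lemma~\ref{lemma:completion-commutes-with-inverting}. From there the paper quotes the formal-scheme statement from Bosch that points of \(\mathrm{Sp}(\widehat{R}[1/\varpi])\) correspond to \(V\)-rig-points of \(\widehat{R}\), and reduces the lemma to showing that rig-points of \(R\) and of \(\widehat{R}\) coincide; the only nontrivial direction is that a surjection \(\widehat{R}\to V'\) restricts to a surjection from \(R\), which the paper proves by lifting to \(W_m\) and observing that a restricted power series evaluated at elements of a finite \(V\)-algebra is already a polynomial in them. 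You instead build the correspondence directly over \(R\): your forward map sends \(\mathfrak{m}\) to the image of \(R\) in the residue field \(L\), so surjectivity is free, and the work shifts to showing that this image is finite over \(V\), which you get from power-boundedness of \(R\) in \(\widehat{A}\) (giving \(V\subseteq V'\subseteq\mathcal{O}_L\)) plus finiteness of \(\mathcal{O}_L\) over the complete discrete valuation ring \(V\) and Noetherianity; you then verify explicitly that \(\mathfrak{m}\mapsto\mathfrak{m}\cap R\) and \(\mathfrak{q}\mapsto\mathfrak{q}A\) are mutually inverse, correctly isolating condition (3) as exactly what makes \(\mathfrak{q}A\cap R=\mathfrak{q}\). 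Your route avoids the black box from Bosch (8.3/3), (8.3/6) and the polynomial trick, at the cost of the \(\mathcal{O}_L\)-finiteness input and the round-trip bookkeeping; the paper's route is shorter given its references but leaves the mutual-inverse check implicit in the cited results. One small point common to both proofs: in the inverse direction you (like the paper, which silently treats the \(V'\) as rig-points) use that \(V'\) is a domain to conclude that \(V'[1/\varpi]\) is a field, whereas conditions (1)--(3) as literally stated do not force this --- the quotient \(V[t]/(t^2-\varpi^2)\) of \(V\langle t\rangle^{\dagger}\) satisfies all three yet corresponds to two maximal ideals --- so the statement implicitly intends ``integral'' in condition (2) to include being an integral domain, as in the paper's definition of a \(V\)-rig-point.
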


In the sequel, a quotient of \(R\) or \(\widehat{R}\) of the form \(V'\) in the
statement will be called a \(V\)-\emph{rig-point} of \(R\) or \(\widehat{R}\).

\begin{proof}
By
\cite[Theorem~1.7(2)]{grosse-klonne:rigid-analytic-spaces-with-overconvergent-structure-sheaf},
the maximal ideals of \(R[1/\varpi]\) are in one-to-one correspondence with the
maximal ideals of its completion. By
Lemma~\ref{lemma:completion-commutes-with-inverting}, the completion is
\(\widehat{R}[1/\varpi]\). Since \(R\) is admissible, so is \(\widehat{R}\),
since the latter is faithfully flat over \(R\)
\cite[Theorem~1.7(1)]{grosse-klonne:rigid-analytic-spaces-with-overconvergent-structure-sheaf}.
Since the points of the rigid analytic space
\(\mathrm{Sp}(\widehat{R}[1/\varpi])\) are in bijective correspondence with
\(V\)-rig-point of \(\widehat{R}\)
\cite[(8.3/3), (8.3/6)]{bosch:formal-rigid},
it suffices to prove that \(V\)-rig-point of \(R\) and \(\widehat{R}\)
agree. First, as any finite \(V\)-algebra is automatically \(p\)-adically complete,
a homomorphism \(R \to V'\) naturally factors through \(\widehat{R}\). So it
suffices to prove that for any surjective map
\(\widehat{\varphi}: \widehat{R}\to V'\), the composition
\(\varphi: R \to \widehat{R} \to V'\) remains surjective. Lifting to a
presentation, it suffices to assume \(R = W_m\). Let \(c_i \in V'\) be the image
of \(t_i\). Since \(V'\) is finite over \(V\), any restricted power series of
\(c_i\) turns to be a polynomial of \(c_i\). This means that \(c_i\) generate
\(V'\) not only topologically, but also algebraically. Hence the composition
\begin{equation*}
V[t_1,\ldots,t_m] \to V\langle t_1,\ldots,t_m\rangle \to V'
\end{equation*}
is surjective. This completes the proof.
\end{proof}
Next, we recall the notion of weak formal schemes of
Meredith~\cite{meredith:weak-formal-scheme}.

\begin{situation}
\label{situation:weak-formal-affine}
Let \(R\) be a weakly complete, finitely generated algebra. In this paragraph
we, following Meredith, define a locally topologically ringed space, which is
denoted by \(\mathrm{Spwf}(R)\).

The ambient set of \(\mathrm{Spwf}(R)\) is the set of open prime ideals of
\(R\). Since \(R\) has \(\varpi\)-adic topology, an ideal is open if and only if
it contains some power of \(\varpi\). This implies that the ambient set of \(R\)
agrees with \(\mathrm{Spec}(R\otimes_V k)\). We give \(\mathrm{Spwf}(R)\) the
induced Zariski topology.

Let \(f \in R\), define the \emph{dagger localization} of \(R\) at \(f\) to be
\begin{equation*}
R\left\langle \frac{1}{f} \right\rangle{}^{\dagger} = R\langle t \rangle^{\dagger}/(tf-1).
\end{equation*}
Plainly, the ambient set of \(\mathrm{Spwf}(R\langle 1/f\rangle^{\dagger})\) is
the same as \(\mathrm{Spec}(R[1/f] \otimes_V k)\).
For a finitely generated \(R\)-module \(M\), define \(M\langle 1/f\rangle^{\dagger}\) to
be \(M \otimes_R R\langle 1/f\rangle^{\dagger}\).
\end{situation}

\begin{theorem}[Meredith]
\label{theorem:dagger-localization-define-sheaf-on-basis}
Let \(R\) be a weakly complete, finitely generated algebra over \(V\).
Let \(M\) be a finitely generated \(R\)-module.
\begin{enumerate}
\item For any nonzero \(f \in R\), \(R\langle 1/f \rangle^{\dagger}\) is flat over
\(R\).
\item The presheaf
\(\mathrm{Spec}(R[1/f]\otimes_V k) \mapsto M\langle 1/f\rangle^{\dagger}\)
on the category of principal open affine subschemes of
\(\mathrm{Spec}(R\otimes_{V}k)\) is a sheaf-on-a-basis, thus defines a sheaf
on \(\mathrm{Spwf}(R)\), which is denoted by \(\widetilde{M}\).
\item \(\mathrm{H}^{i}(\mathrm{Spwf}(R),\widetilde{M})=0\) for all \(i > 0\).
\end{enumerate}
\end{theorem}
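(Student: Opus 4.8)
The plan is to imitate the proofs of the corresponding statements for ordinary affine schemes and for $\varpi$-adic formal schemes, using reduction modulo $\varpi^n$ to transport each assertion to the classical theory of quasi-coherent sheaves on the affine scheme $\mathrm{Spec}(R\otimes_V k)$. Everything rests on a single computation: for every finitely generated $R$-module $M$ and every $f\in R$,
\[
M\langle 1/f\rangle^{\dagger}\otimes_V V/\varpi^n V \;\cong\; (M/\varpi^n M)[1/f],
\]
that is, a dagger localization becomes an honest localization after reduction modulo $\varpi^n$. I would prove this by noting that an integral overconvergent power series $\sum_i a_i t^i\in R\langle t\rangle^{\dagger}$ has coefficients whose $\varpi$-adic valuations grow at least linearly in $i$, so that modulo $\varpi^n$ only finitely many terms survive and $R\langle t\rangle^{\dagger}/\varpi^n=(R/\varpi^n)[t]$; imposing $tf=1$ and tensoring with $M$ gives the claim.

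For (1), the identity shows $R\langle 1/f\rangle^{\dagger}/\varpi^n$ is a localization, hence flat, over $R/\varpi^n$ for every $n$. Since $R$ and $R\langle 1/f\rangle^{\dagger}$ are $\varpi$-adically separated with $\varpi$ a non-zero-divisor, I would deduce $R$-flatness from the local criterion for flatness in its ``reduction modulo a non-zero-divisor'' form; the ideal-separatedness hypothesis is automatic because all modules in sight are finitely generated and $\varpi$-adically separated. For (2), because the underlying space $\mathrm{Spwf}(R)=\mathrm{Spec}(R\otimes_V k)$ is spectral and the basic opens $D(\bar f)$ are stable under intersection, it suffices to verify the sheaf condition for a finite cover of a basic open $D(\bar f)$ by basic opens $D(\overline{fg_i})$. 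This amounts to exactness of the augmented Čech complex
\[
0 \to M\langle 1/f\rangle^{\dagger} \to \prod_i M\langle 1/fg_i\rangle^{\dagger} \to \prod_{i<j} M\langle 1/fg_ig_j\rangle^{\dagger}.
\]
Reducing modulo $\varpi^n$ and invoking the key identity turns this into the ordinary Čech complex of $\widetilde{M/\varpi^n}$ for a distinguished-open cover of $D(\bar f)\subseteq\mathrm{Spec}(R/\varpi^n)$, which is exact by the classical affine theory; injectivity of the augmentation is then immediate from $\varpi$-adic separatedness.

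Exactness in the middle is where I expect the genuine difficulty, and where the weakly complete (rather than $\varpi$-adically complete) nature of the rings is felt. Exactness modulo every $\varpi^n$ produces, for a given cocycle, successive approximate liftings, but the naive successive-approximation limit lands only in the $\varpi$-adic completion $\widehat{M\langle 1/f\rangle}$ rather than in the weakly complete module $M\langle 1/f\rangle^{\dagger}$; one must show the lift can be chosen overconvergent, i.e. that the overconvergence radii of the approximations do not degenerate to $1$. I would handle this by writing $R\langle 1/f\rangle^{\dagger}$ as a filtered colimit, over overconvergence radii $\rho\downarrow 1$, of flat algebras of $\mathfrak{T}_m(\rho)$-type satisfying Tate's acyclicity theorem, so that the Čech complex becomes a filtered colimit of exact complexes and exactness is preserved; alternatively one invokes the overconvergent acyclicity results of Grosse-Klönne.

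Finally, (3) follows formally once (2) is known. The basic opens $D(\bar f)$ are themselves weak formal affines $\mathrm{Spwf}(R\langle 1/f\rangle^{\dagger})$, they are closed under intersection, and on such covers the Čech complex is exact by the argument above. The Čech-to-derived-functor spectral sequence (equivalently Cartan's criterion for a base closed under finite intersection) then gives $\mathrm{H}^i(\mathrm{Spwf}(R),\widetilde{M})=0$ for all $i>0$. The main obstacle, to reiterate, is the middle-exactness lifting in the previous paragraph: in the complete (formal-scheme) setting it is a routine convergence argument, but controlling overconvergence radii so that the limit remains in the dagger algebra is the one step that is genuinely harder in the weakly complete world.
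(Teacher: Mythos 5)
The paper offers no proof of this theorem: it is quoted verbatim from Meredith (\emph{Weak formal schemes}, Theorem~14), so your argument has to stand on its own. Your organizing identity \(M\langle 1/f\rangle^{\dagger}\otimes_V V/\varpi^n \cong (M/\varpi^n M)[1/f]\) is correct (coefficients of an integral overconvergent series do have linearly growing valuations), and it is the right reduction: it gives injectivity of the augmented \v{C}ech complex by \(\varpi\)-adic separatedness, and combined with the facts that weakly complete, finitely generated algebras are Noetherian (Fulton) and \(\varpi\)-adically Zariskian (\(1+\varpi a\) is inverted by an overconvergent series), it makes the local-criterion argument for (1) go through. One correction there: \(R\langle 1/f\rangle^{\dagger}\) is \emph{not} a finitely generated \(R\)-module, so ideal-separatedness is not ``automatic from finite generation''; you need that \(\mathfrak{a}\otimes_R R\langle 1/f\rangle^{\dagger}\) is a finite module over the Noetherian Zariskian ring \(R\langle 1/f\rangle^{\dagger}\), hence separated by Krull's intersection theorem.

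The genuine gap is the one you flagged yourself, and neither proposed patch closes it. Reduction modulo every \(\varpi^n\) produces a (unique) bounding element only in the completion \(\widehat{M\langle 1/f\rangle}\); the entire content of the theorem is that this element is overconvergent. Your first patch claims the \v{C}ech complex is a filtered colimit over \(\rho\downarrow 1\) of \emph{exact} complexes of \(\mathfrak{T}_m(\rho)\)-type algebras; but exactness at each fixed \(\rho>1\) is not available --- indeed at radius \(\rho>1\) the sets \(\{|g_i|\geq 1\}\) need not even cover the \(\rho\)-fringe space, since its points need not specialize into \(\bigcup D(g_i)\). What is actually true, and what Meredith proves by an explicit partition-of-unity computation, is the quantitative ind-statement that a cocycle defined at radius \(\rho\) bounds at some smaller \(\rho'\in(1,\rho)\); that radius bookkeeping \emph{is} the proof, not a detail to be outsourced. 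Your second patch, Grosse-Kl\"onne's acyclicity, is a statement about dagger algebras over \(K\): it yields exactness of the rational \v{C}ech complex, and descending to the integral modules \(M\langle 1/f\rangle^{\dagger}\) needs a separate denominator-bounding argument; it also says nothing about \(\varpi\)-torsion \(M\), which the theorem covers. Finally, for (3) Cartan's criterion requires vanishing of \v{C}ech cohomology in \emph{all} positive degrees, not just the sheaf axiom, so the unproved lifting step recurs in every degree.
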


\begin{proof}
See~\cite[Theorem 14]{meredith:weak-formal-scheme}.
\end{proof}

\begin{definition}
\label{definition:weak-formal-scheme}
A \emph{weak formal scheme} over \(V\) is a locally topologically ringed space which is
locally isomorphic to \((\mathrm{Spwf}(R), \widetilde{R})\) for some weakly
complete, finitely generated algebra \(R\) over \(V\).
\end{definition}

\begin{situation}
\label{situation:completion-weak-formal-scheme}
\textbf{Weak completion.}
(i) Let \(X\) be a separated \(V\)-scheme of finite type. Then we can also
define the \emph{weak completion} \(X^{\dagger}\) of \(X\) along its special fiber
\(X \otimes_V k\). This will be a weak formal scheme obtained by gluing the weak
completion of open affine schemes of \(X\).
For example,
\(\mathbb{A}^{n,\dagger}_V=\mathrm{Spwf}(V\langle{}t_1,\ldots,t_n\rangle^{\dagger})\).

(ii) Let \(\mathfrak{X}\) be a weak formal scheme over \(V\). Using the completion
functor \(R \mapsto \widehat{R}\), and gluing, we can always define the
completion \(\widehat{\mathfrak X}\) of a weak formal scheme \(\mathfrak{X}\),
which is a formal scheme over \(V\). There is a canonical morphism of locally
and topologically ringed spaces \(\widehat{\mathfrak X} \to \mathfrak{X}\).

(iii) Let \(X\) be a separated finite type \(V\)-scheme, with weak completion
\(X^{\dagger}\). Then the completion of \(X\) along its special fiber is the
same as the completion \(\widehat{X^{\dagger}}\) of the weak formal scheme
\(X^{\dagger}\).
\end{situation}

\section{Dagger spaces and generic fibers of weak formal schemes}
\label{sec:orgd31decc}
In this section we discuss how to associate a generic fiber to a weak formal
scheme. We then define the specialization functor which can be used to define a
crucial notion in this paper, the tubular neighborhood. We then adapt
Berthelot's weak fibration theorem in the present context.

\begin{situation}
\label{situation:dagger-spaces}
\textbf{Dagger spaces.}
Just like formal schemes have rigid analytic spaces as generic fibers, one can
define generic fibers for weak formal schemes. These geometric gadgets are known
as \emph{dagger spaces}. The foundation of dagger spaces appeared
in~\cite{grosse-klonne:rigid-analytic-spaces-with-overconvergent-structure-sheaf}.
Just like rigid analytic spaces, dagger spaces are not genuine ringed spaces,
but spaces with Grothendieck topology. The local pieces of a dagger space are
dagger algebras, just like affinoid algebras are local pieces of rigid analytic
spaces. We shall generally refer the reader to Große-Klönne's paper for basic
properties of dagger spaces.

Introducing dagger spaces into the story is needed in computing the ``analytic
cohomology'' of a \(k\)-variety. The procedure is as follows. Start with a
\(k\)-variety \(X\) that embeds into the special fiber a smooth weak formal
scheme \(P\) over \(V\). Then as we shall define shortly, we can take the ``tube''
\((X)_P\) of \(X\) in \(P\), which will be a dagger space over \(K\). The
analytic cohomology of \(X\) is then the de~Rham cohomology of the dagger
space \((X)_P\). This depends on the choice of the ``frame'' \(P\), but as we
shall show in \S\ref{sec:org0ac363b}, the analytic
cohomology agrees with the sheaf cohomology of the Monsky--Washnitzer
site.
In~\cite[Theorem~5.1(c)]{grosse-klonne:rigid-analytic-spaces-with-overconvergent-structure-sheaf},
it is shown that the analytic cohomology defined above agrees with Berthelot's
rigid cohomology. Combining these results we shall get our main result.
\end{situation}

\begin{situation}
\label{situation:completion-dagger-space}
\textbf{Associated rigid analytic spaces.}
Let \(X\) be a dagger space.
According to~\cite[Theorem~2.19]{grosse-klonne:rigid-analytic-spaces-with-overconvergent-structure-sheaf},
there exists a rigid analytic space \(\widehat{X}\),
called the \emph{completion} of \(X\) or \emph{associated rigid analytic space} of \(X\),
together with a morphism of locally G-ringed spaces \(\widehat{X} \to X\).
This morphism is the terminal object in the category of morphisms of
G-ringed spaces \(Y \to X\) where \(Y\) is a rigid analytic space. The
local construction is as follows: if \(X = \mathrm{Sp}(A)\),
then \(\widehat{X} = \mathrm{Sp}(\widehat{A})\), where \(\widehat{A}\) is the completion
of topological ring \(A\)
(cf.~Lemma~\ref{lemma:completion-vs-presentation}).
\end{situation}

\begin{instance}
\label{example:weierstrass-domain}
Let \(X = \mathrm{Sp}(A)\) be an affinoid dagger space. We say an affinoid
subspace \(Y\) of \(X\) is a Weierstrass domain, if \(\widehat{Y}\) is a
Weierstraß domain of \(\widehat{X}\). Hence, Weierstraß subdomains of \(A\) are
affinoid  subdomains defined by the dagger algebras of the form
\begin{equation*}
B = A\langle t_1,\ldots,t_n \rangle^{\dagger}/(t_i - f_i:i=1,2,\ldots n)
\end{equation*}
for some \(f_i \in A\). Here are two observations about Weierstraß domains.
\begin{enumerate}
\item The ambient set of \(\mathrm{Sp}(B)\) is \(\{x \in X : |f_i(x)|\leq 1 \}\).
This is because passing to completion does not change the ambient set and the
rigid analytic space \(\mathrm{Sp}(\widehat{B})\) has the said form.
\item \emph{The map \(A \to B\) has dense image}. This is because the quotient of the
smaller ring \(A[t_1,\ldots,t_n]/(t_i-f_i)\) is dense in the completion
\(\widehat{B}\).
\end{enumerate}
\end{instance}

\begin{situation}
\label{situation:rigid-space-attached-to-formal-scheme}
\textbf{Generic fiber of weak formal scheme.}
Any weak formal \(V\)-scheme \(\mathfrak{X}\) admits a dagger space
\(\mathfrak{X}_K\) as its ``generic fiber''. The construction is essentially the
same as the Raynaud generic fiber of a formal scheme. We give the construction
in steps (cf.~\cite[\S0.2]{berthelot:rigid-cohomology-compact-support}).
\end{situation}

(1) If \(\mathfrak{X}=\mathrm{Spwf}(A)\) is an affine weak formal scheme, then
\(A[1/\varpi] = A\otimes K\) is a dagger algebra
(see~\ref{situation:weakly-complete}). In this case, \(\mathfrak{X}_K\) is
just the affinoid dagger space \(\mathrm{Sp}(A \otimes K)\) (whose ambient set
is the set of maximal ideals of \(A \otimes K\). As we have argued in
Lemma~\ref{lemma:finite-quotient-mw-algebra}, the points of
\(\mathfrak{X}_K\) [which are the same as the points of
\(\widehat{\mathfrak{X}}_{K}\)
(\cite[Theorem~1.7(2)]{grosse-klonne:rigid-analytic-spaces-with-overconvergent-structure-sheaf})]
are in a bijection with the quotients of \(A\) that are integral, finite, flat
over \(V\), i.e., \emph{rig-points} of \(A\).

If \(V'\) is a rig-point, then \(V'\otimes K\) is a finite extension of
\(K\), defining a maximal ideal of \(\widehat{A} \otimes K\) which  in turn
determines a unique maximal of \(A \otimes K\). Conversely, if \(K'\) is a
finite extension of \(K\) defined by a maximal ideal of \(A \otimes K\), the
image \(R\) of \(A\) in \(K'\) is an integral flat \(V\)-algebra, i.e., a rig
point of \(A\) (since \(A\) and \(\widehat{A}\) have the same image on \(K'\),
by Lemma~\ref{lemma:finite-quotient-mw-algebra}).

If \(t_1,\ldots,t_n\) are weak generators of the \(V\)-algebra \(A\),
then their images in \(K'\) contained in the valuation ring of \(K'\), and are
consequently integral over \(V\). The ring \(V'\) generated by them is thus
finite over \(V\). Since \(V\) is henselian, and \(V'\) is an integral domain
and finite, it follows that \(V'\) is a local \(V\)-algebra, defining a
formal subscheme \(\mathrm{Spwf}(V') \subset \mathrm{Spwf}(A)\) supported at a
single closed point of \(\mathfrak{X}\), called the \emph{specialization} of the point
\(x\in\mathrm{Sp}(A\otimes{}K)\) corresponding to \(V'\).

\medskip
(2) Now suppose that \(\mathfrak{X}\) is a weak formal \(V\)-scheme.
We define \(\mathfrak{X}_K\), as a set, to be the set
of all closed formal subschemes \(Z\) of \(\mathfrak{X}\) that are integral,
finite, flat over \(V\). The support of such a subscheme \(Z\) is a closed point
of \(\mathfrak{X}\), which will be called the \emph{specialization} of the point
\(x\in\mathfrak{X}_K\) corresponding to \(Z\). By associating to any point
\(x\in\mathfrak{X}_K\) its specialization, we get a set-theoretic map
\begin{equation*}
\mathrm{sp}: \mathfrak{X}_K \longrightarrow \mathfrak{X}.
\end{equation*}
For all open affine \(U = \mathrm{Spwf}(A) \subset \mathfrak{X}\),
\(\mathrm{sp}^{-1}(U)\) is in bijection with \(\mathrm{Sp}(A\otimes{}K)=U_K\),
which has a structure of dagger space. The dagger structure of
\(\mathfrak{X}_K\) is furnished by the following proposition.

\begin{proposition}[Cf.~\cite{berthelot:rigid-cohomology-compact-support}, Proposition (0.2.3)]
\label{proposition:defining-raynaud-generic-fiber}
Let \(\mathfrak{X}\) be a weak formal \(V\)-scheme.
Then there is a unique structure of dagger space on
the set \(\mathfrak{X}_K\) defined
in~(\ref{situation:rigid-space-attached-to-formal-scheme}) above, such that
the following conditions hold.
\begin{enumerate}
\item The inverse image of an open subscheme under the map
\(\mathrm{sp}: \mathfrak{X}_K \to \mathfrak{X}\) is an open subspace of
\(\mathfrak{X}_K\).
\item The inverse image of an open covering under \(\mathrm{sp}\) is an admissible
covering for \(\mathfrak{X}_K\).
\item For all open affine \(U \subset X\), the structure induced by
\(\mathfrak{X}_K\) on \(\mathrm{sp}^{-1}(U)\) agree with \(U_K\) defined
in~(\ref{situation:rigid-space-attached-to-formal-scheme}) above.
\end{enumerate}
Moreover, the set-theoretic map \(\mathrm{sp}\) induces a morphism of ringed
topoi
\begin{equation*}
\mathrm{sp}: (\textit{Shv}(\mathfrak{X}_K),\mathcal{O}_{\mathfrak{X}_K})
\to (\mathfrak{X}_{\text{Zar}}^{\sim},\mathcal{O}_{\mathfrak{X}}).
\end{equation*}
The dagger space \(\mathfrak{X}_K\) is called the \emph{generic fiber}
of \(\mathfrak{X}\).
\end{proposition}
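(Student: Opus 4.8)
The plan is to imitate Berthelot's proof of Proposition~(0.2.3): reduce the global assertion to a gluing problem over an affine open cover, and transport the delicate admissibility statements to the completion, where Berthelot's original result (or its rigid-analytic inputs) applies. The local building block is the behavior of dagger localizations under the generic-fibre construction. For $f \in A$, I would first show that the generic fibre of $\mathrm{Spwf}(A\langle 1/f\rangle^{\dagger})$ is the Laurent subdomain $\{x \in U_K : |f(x)| \geq 1\}$ of $U_K = \mathrm{Sp}(A \otimes K)$, and that this set is exactly $\mathrm{sp}^{-1}(D(\bar f))$. Indeed, since $f \in A$ is power bounded we have $|f(x)| \leq 1$ everywhere, so $\{|f| \geq 1\} = \{|f| = 1\}$; and a rig-point $x$ corresponding to $V'$ satisfies $|f(x)| = 1$ precisely when $f(x)$ is a unit in $V'$, i.e.\ when its specialization lies in the principal open $D(\bar f) \subset \mathrm{Spec}(A \otimes_V k)$. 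The identification of this tube with the affinoid dagger space $\mathrm{Sp}(A\langle 1/f\rangle^{\dagger} \otimes K)$ then follows from the description of Weierstraß/Laurent subdomains in Example~\ref{example:weierstrass-domain}, using that $A\langle 1/f\rangle^{\dagger}$ adjoins $1/f$ as a power-bounded element. This already establishes condition~(1) on a basis of principal opens.

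Next I would treat gluing and uniqueness together. Choosing an affine open cover $\mathfrak{X} = \bigcup_i U_i$ with $U_i = \mathrm{Spwf}(A_i)$, condition~(3) forces the dagger structure on each $\mathrm{sp}^{-1}(U_i)$ to be $U_{i,K}$; once condition~(2) is known, an admissible cover determines the dagger space, which gives uniqueness. For existence I must verify that these structures agree on overlaps. Covering $U_i \cap U_j$ by opens that are principal in both $U_i$ and $U_j$—possible as for ordinary schemes, since their special fibres are ordinary schemes—the previous paragraph identifies both induced dagger structures with the \emph{same} Laurent subdomain, because the dagger localization $A\langle 1/f\rangle^{\dagger}$ is compatible with the maps $A_i \to A_j$. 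The cocycle conditions are then automatic, as all transition maps are the canonical identifications of these localizations. This produces a dagger space $\mathfrak{X}_K$ satisfying~(3).

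The main obstacle is condition~(2)—the admissibility of $\mathrm{sp}^{-1}$ of an open covering—and condition~(1) in full generality. Here I would pass to the completion. By Große-Klönne~\cite[Theorem~1.7(2)]{grosse-klonne:rigid-analytic-spaces-with-overconvergent-structure-sheaf} the points of $\mathfrak{X}_K$ coincide with those of the Raynaud generic fibre of the formal completion $\widehat{\mathfrak{X}}$ (cf.~\ref{situation:completion-weak-formal-scheme}), and by~\cite[Theorem~2.19]{grosse-klonne:rigid-analytic-spaces-with-overconvergent-structure-sheaf} a dagger space and its completion share the same underlying G-topology, so that admissibility of a family of subspaces may be checked on the associated rigid analytic space. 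Conditions~(1) and~(2) for $\mathrm{sp}$ therefore follow from Berthelot's Proposition~(0.2.3) applied to $\widehat{\mathfrak{X}}$: a cover of $\mathrm{Spec}(A \otimes_V k)$ by principal opens $D(\bar f_\alpha)$ with the $\bar f_\alpha$ generating the unit ideal pulls back to the Laurent covering $\{\,|f_\alpha| = 1\,\}$, which covers $U_K$ because at every rig-point $x$ some $|f_\alpha(x)| = 1$, and which is admissible as a finite Laurent covering.

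Finally, the morphism of ringed topoi is assembled from these ingredients. Conditions~(1) and~(2) say precisely that $\mathrm{sp}$ is continuous for the relevant Grothendieck topologies—sending open subschemes to admissible opens and open coverings to admissible coverings—so it induces a morphism of topoi $\textit{Shv}(\mathfrak{X}_K) \to \mathfrak{X}_{\mathrm{Zar}}^{\sim}$. The accompanying map of structure sheaves $\mathcal{O}_{\mathfrak{X}} \to \mathrm{sp}_{*}\mathcal{O}_{\mathfrak{X}_K}$ is the one given locally over $U = \mathrm{Spwf}(A)$ by the canonical homomorphism $A \to A \otimes K = \Gamma(U_K, \mathcal{O}_{\mathfrak{X}_K})$; these are compatible with the dagger localizations identified above, hence glue. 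I expect the only genuinely nontrivial step to be the admissibility in the third paragraph, and my intention is to dispatch it by the completion dictionary rather than re-proving it intrinsically on the dagger side.
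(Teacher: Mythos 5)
Your proposal is correct, and its skeleton---identify $\mathrm{sp}^{-1}(D(f))$ with the dagger Laurent domain $\{x : |f(x)| = 1\} = \mathrm{Sp}\bigl((A\otimes K)\langle T\rangle^{\dagger}/(fT-1)\bigr)$, glue over an affine cover by refining each overlap $U_i\cap U_j$ into opens principal on both sides, then check (1)--(3) locally---is the same as the paper's. The two arguments part ways only on the admissibility statements. The paper proves (1) and (2) intrinsically on the dagger side: every open of the noetherian space $\mathrm{Spwf}(A)$ is a finite union of $D(f)$'s, and the resulting finite coverings by special subdomains are admissible by the definition of the weak G-topology; this direct route also yields, essentially for free, the identification $\mathrm{sp}_{\ast}\mathcal{O}_{\mathfrak{X}_K} = \mathcal{O}_{\mathfrak{X}}\otimes K$ (using noetherianness to see that $V\mapsto\Gamma(V,\mathcal{O}_{\mathfrak{X}})\otimes K$ is already a sheaf). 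You instead transport admissibility to the formal completion $\widehat{\mathfrak{X}}$ via Gro{\ss}e-Kl\"onne's Theorem~2.19 and quote Berthelot's Proposition~(0.2.3) there; this is legitimate and shorter, but it silently uses that the completion of the glued dagger space is the Raynaud generic fiber of $\widehat{\mathfrak{X}}$, which should be recorded (locally it is Lemma~\ref{lemma:completion-commutes-with-inverting}). Two small slips worth fixing: the domains of Example~\ref{example:weierstrass-domain} are Weierstra{\ss} domains $\{|f|\le 1\}$, whereas $\{|f|\ge 1\}$ is a Laurent domain, so that citation should be adjusted; and the compatibility on overlaps is not mediated by ``maps $A_i\to A_j$'' (there are none in general) but by the equality of both dagger localizations with $\Gamma(W,\mathcal{O}_{\mathfrak{X}})\otimes K$ for an open $W$ simultaneously principal in $U_i$ and $U_j$.
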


\begin{proof}
The uniqueness is clear. For the existence, we choose an open covering of
\(\mathfrak{X}\) by affine weak formal schemes \(U_i = \mathrm{Spwf}(A_i)\). For
each \(f \in A_i\) there is an open affine \(D(f) \subset U_i\). Regard \(f\) as an
element in \(A_i \otimes K = \Gamma(U_{iK},\mathcal{O}_{U_{iK}})\), we then have,
set-theoretically
\begin{equation}
\label{eq:inverse-specialization-of-principal-open}
\mathrm{sp}^{-1}(D(f)) = \{x \in U_{iK} : |f(x)| \geq 1\}.
\end{equation}
In fact, if \(x \in U_{iK}\) corresponds to the quotient \(R\) of \(A_i\), then
the point \(\mathrm{sp}(x)\) is in \(D(f)\) if and only if \(f\) is \emph{not} in the
maximal ideal of \(R\), which means that \(f \in A \otimes K\) is not in the
maximal ideal of the valuation ring \(V(x) \subset K(x)\). Therefore, we infer
that \(x \in U_{iK}\) if and only if \(|f(x)| \geq 1\). On the other hand,
Moreover, we have
\begin{equation*}
\Gamma(D(f),\mathcal{O}_{\mathfrak{X}}) = A_i\langle{}T\rangle^{\dagger}/(fT - 1),
\end{equation*}
hence \(D(f)_K\) is a dagger space associated to
\((A_i\otimes{}K)\langle{}T\rangle^{\dagger}/(fT-1)\), which indeed underlies
the set \(\mathrm{sp}^{-1}(D(f))\).

Since the ambient topological spaces of \(U_i\) are noetherian, the open
\(U_i\cap U_j\) is quasi-compact, hence is a finite union of open subsets of the
form \(D(f)\), with \(f \in A_i\) (resp.~\(f \in A_j\)); it follows that
\begin{equation*}
U_{iK} \cap U_{jK} = \textstyle\bigcup_{f} D(f)_K
\end{equation*}
is open in \(U_{iK}\) (resp.~\(U_{jK}\)). Fix a finite open cover of
\(U_i\cap U_j\) by the opens \(D(f_\alpha)\), \(f_{\alpha} \in A_i\), and, for
all \(\alpha\), a finite open covering \(D(f_{\alpha\beta})\) of \(D(f_\alpha)\)
with \(f_{\alpha\beta} \in A_j\). Then \(\mathrm{sp}^{-1}(D(f_{\alpha\beta}))\)
form a finite open covering of \(U_{iK} \cap U_{jK}\) by the special domains of
\(U_{iK}\) and \(D(f_\alpha)_K\); but as \(D(f_\alpha)_K\) is a special domain
in \(U_{jK}\), \(\mathrm{sp}^{-1}(D(f_{\alpha\beta}))\) are also special domains
of \(U_{iK}\). Therefore, \(\mathrm{sp}^{-1}(D(f_{\alpha\beta}))\) form an
admissible covering of \(U_{iK} \cap U_{jK}\) in \(U_{iK}\) and \(U_{jK}\). It
follows that the structures of rigid analytic space on the domains
\(D(f_{\alpha\beta})\) induced by \(U_{iK}\) and \(U_{jK}\) are the same. We can
thus define a rigid analytic space structure on \(\mathfrak{X}_K\).

Recall that saturated subset of a set \(A\) with respect to a map \(f: A \to B\)
are subsets of \(A\) of the form \(f^{-1}(C)\) for \(C \subset B\). In this
terminology, the open subsets described above defining \(\mathfrak{X}_K\) are
saturated with respect to the specialization map
\(\mathrm{sp}:\mathfrak{X}_K\to\mathfrak{X}\). Therefore to check the condition
(1) and (2) we can work locally on \(\mathfrak{X}\), hence we can assume that
\(\mathfrak{X} = \mathrm{Spwf}(A)\). Since the open subsets \(D(f)\) form a basis
of the topology of \(\mathrm{Spwf}(A)\), and since \(\mathrm{sp}^{-1}(D(f))\) are
indeed admissible open subsets of \(\mathrm{Sp}(A)\). This proves (1). (2) is
proven similarly. The condition (3) is automatic by our construction.

Finally, by (1) and (2), \(\mathrm{sp}\) induces a morphism of categories with
pretopologies, hence a morphism of sheaf topoi
\(\mathrm{sp}:\textit{Shv}(\mathfrak{X}_{K})\to\mathfrak{X}_{\text{Zar}}^{\sim}\).
If \(U\) is an open affine of \(\mathfrak{X}\), we have by construction
\begin{equation*}
  \Gamma(U,\mathrm{sp}_\ast \mathcal{O}_{\mathfrak{X}_K})
= \Gamma(\mathrm{sp}^{-1}U,\mathcal{O}_{\mathfrak{X}_K})
= \Gamma(U,\mathcal{O}_{\mathfrak{X}}) \otimes K.
\end{equation*}
The presheaf \(V \mapsto \Gamma(V,\mathcal{O}_{\mathfrak{X}}) \otimes K\) is a
sheaf on \(U\), because the subspace \(U\) is noetherian. We therefore get an
identification
\begin{equation*}
\mathrm{sp}_\ast \mathcal{O}_{\mathfrak{X}_K} = \mathcal{O}_{\mathfrak{X}} \otimes K,
\end{equation*}
and, for all open subset \(U\), the homomorphism
\begin{equation*}
\Gamma(U,\mathcal{O}_{\mathfrak{X}}) \otimes K
\to \Gamma(U,\mathrm{sp}_\ast\mathcal{O}_{\mathfrak{X}_K})
\end{equation*}
is an isomorphism whenever \(U\) is quasi-compact (hence noetherian). In
particular, the morphism \(\mathrm{sp}\) is a morphism of ringed topoi.
\end{proof}

\begin{situation}
\label{situation:tubular-neighborhood}
Let \(\mathcal{I}\) be a coherent open ideal of a weak formal scheme
\(\mathfrak{X}\). Let \(Z\) be the vanishing scheme of \(\mathcal{I}\). The \emph{open
tubular neighborhood} of \(Z\) in \(\mathfrak{X}\), notation
\((Z)_{\mathfrak{X}}\), is the subset of \(\mathfrak{X}_K\) defined by
\(\mathrm{sp}^{-1}(Z)\). (As a side comment, we shall not need tubular
neighborhoods for locally closed immersions.)
A priori the tubular neighborhood is only a subset of \(\mathfrak{X}_{K}\), but
we claim \((Z)_{\mathfrak{X}}\) \emph{is an admissible open subspace of}
\(\mathfrak{X}_{K}\). The problem being local, so we can assume \(\mathfrak{X}\)
is affine. Then the next lemma then justifies our assertion.
\end{situation}

\begin{lemma}
\label{lemma:equation-and-tube}
Let the notation be as in~\ref{situation:tubular-neighborhood}.
Assume that \(\mathfrak{X} = \mathrm{Spwf}(R)\) is affine,
\(\Gamma(\mathfrak{X},\mathcal{I})\) is generated by \(f_1,\ldots,f_r \in R\).
Then
\begin{align*}
  (Z)_{\mathfrak{X}} &= \{x \in \mathfrak{X}_K: |f_i(x)|<1 \} \\
                     &= \{x \in \mathfrak{X}_K : |f(x)| < 1, \forall f \in I\}.
\end{align*}
is an admissible open subspace of \(\mathfrak{X}_{K}\) which is a nested union
of Weierstraß domains.
\end{lemma}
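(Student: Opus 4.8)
The plan is to treat the set-theoretic identity first and the admissibility claim second.

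For the set-theoretic equalities, I would unwind the definition of the specialization map from~\ref{situation:rigid-space-attached-to-formal-scheme}. A point \(x \in \mathfrak{X}_K = \mathrm{Sp}(R[1/\varpi])\) corresponds to a rig-point, i.e.\ a surjection \(R \twoheadrightarrow V'\) with \(V'\) integral, finite and flat over \(V\); as recorded there, \(V'\) is local, and under the embedding \(V' \hookrightarrow \mathcal{O}_{K(x)}\) its maximal ideal is \(\mathfrak{m}_{V'} = \{v \in V' : |v| < 1\}\). The specialization \(\mathrm{sp}(x)\) is the closed point of \(\mathfrak{X}\) cut out by the preimage \(\mathfrak{p} \subset R\) of \(\mathfrak{m}_{V'}\). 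Hence \(\mathrm{sp}(x) \in Z = V(\mathcal{I})\) if and only if \(f_i \in \mathfrak{p}\) for all \(i\), which is to say the image \(f_i(x)\) of \(f_i\) in \(V'\) lies in \(\mathfrak{m}_{V'}\), i.e.\ \(|f_i(x)| < 1\) for every \(i\). This gives \((Z)_{\mathfrak{X}} = \{x : |f_i(x)| < 1\}\). The second equality follows from the ultrametric inequality: any \(f \in I\) can be written \(f = \sum_i g_i f_i\) with \(g_i \in R\), so \(|g_i(x)| \leq 1\) and \(|f(x)| \leq \max_i |f_i(x)| < 1\); the reverse containment is trivial since each \(f_i \in I\).

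Next I would exhibit the nested exhaustion. For \(\rho \in |K^\times| \otimes \mathbb{Q}\) with \(\rho < 1\), put \(W_\rho = \{x \in \mathfrak{X}_K : |f_i(x)| \leq \rho,\ i = 1, \dots, r\}\). Choosing \(N \geq 1\) and \(c \in K^\times\) with \(\rho^N = |c|\), the inequality \(|f_i(x)| \leq \rho\) is equivalent to \(|(f_i^N/c)(x)| \leq 1\), so \(W_\rho\) is precisely the Weierstraß subdomain of \(\mathfrak{X}_K\) defined by the functions \(f_i^N/c \in R[1/\varpi]\) in the sense of Example~\ref{example:weierstrass-domain}. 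Clearly \(W_\rho \subseteq W_{\rho'}\) for \(\rho \leq \rho'\), and since \(|K^\times| \otimes \mathbb{Q}\) is dense in \(\mathbb{R}_{>0}\), every \(x\) with \(\max_i |f_i(x)| < 1\) lies in some \(W_\rho\). Therefore \((Z)_{\mathfrak{X}} = \bigcup_{\rho < 1} W_\rho\) is a nested union of Weierstraß domains, and choosing a cofinal sequence \(\rho_n \nearrow 1\) exhibits it as a countable such union.

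It remains to check that this union is an admissible open and that \(\{W_\rho\}\) is an admissible covering, which I expect to be the main obstacle. The key input is the maximum modulus principle for affinoid dagger algebras: given any morphism \(Y = \mathrm{Sp}(B) \to \mathfrak{X}_K\) from a dagger affinoid whose image lies in \((Z)_{\mathfrak{X}}\), each \(f_i\) pulls back to a function on \(Y\) with \(\sup_Y |f_i| < 1\) attained, so that \(Y \subseteq W_\rho\) once \(\rho\) is chosen below \(1\) but above \(\max_i \sup_Y |f_i|\). Thus every such \(Y\) factors through a single member \(W_\rho\) of the covering, which is exactly Tate's criterion guaranteeing that \(\{W_\rho\}\) is an admissible covering of the admissible open \((Z)_{\mathfrak{X}}\). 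The difficulty here lies entirely in invoking the G-topology formalism and the maximum modulus principle for dagger spaces from Große-Klönne's foundations; once these are granted, the set-theoretic steps above complete the proof.
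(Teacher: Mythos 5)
Your proof is correct and follows essentially the same route as the paper: the set-theoretic identities come from the description of the specialization map on rig-points (the paper cites its displayed formula for \(\mathrm{sp}^{-1}(D(f))\), which you re-derive, and handles the ``\(\forall f\in I\)'' version via complements of the \(D(f)\) rather than your ultrametric estimate), and the exhaustion by the Weierstraß domains \(\{x: |f_i(x)|\leq \rho\}\) with \(\rho \nearrow 1\) is exactly the paper's decomposition. The only substantive difference is that you additionally verify admissibility of the covering via the maximum modulus principle and Tate's criterion, a point the paper's proof leaves implicit.
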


\begin{proof}
Since
\(Z = \mathfrak{X}\setminus\bigcup D(f_i)=\mathfrak{X}\setminus\bigcup_{f\in I} D(f)\),
we have the set-theoretic equality
\(\mathrm{sp}^{-1}(Z)=\bigcap\mathfrak{X}_{K}\setminus\mathrm{sp}^{-1}(D(f_i))=\bigcap_{f\in{}I}\mathfrak{X}_{K}\setminus\mathrm{sp}^{-1}(D(f))\).
Then both equality immediately follows
from~\eqref{eq:inverse-specialization-of-principal-open}.
Clearly
\begin{equation}
\label{eq:tube-union-weierstrass}
\{x \in \mathfrak{X}_K: |f_i(x)|<1 \} = \bigcup_{n} \{x \in \mathfrak{X}_{K}: |f_i(x)| \leq \varepsilon_n\}
\end{equation}
where \(\varepsilon_n \in |K^{\times}| \otimes \mathbb{Q}\) is a sequence of
numbers converging to \(1\). Each subspace
\begin{equation*}
\{x \in \mathfrak{X}_{K}: |f_i(x)| \leq \varepsilon_n\}
\end{equation*}
is a Weierstraß domain of \(\mathrm{Sp}(R\otimes K)\). This justifies the second
assertion.
\end{proof}

\begin{lemma}
\label{lemma:completing-tubes-get-tubes}
Let the notation be as in \ref{situation:tubular-neighborhood}. The completion (see
\ref{situation:completion-dagger-space}) of the dagger space
\((Z)_{\mathfrak{X}}\) equals the rigid analytic tubular neighborhood of \(Z\)
in \(\widehat{\mathfrak{X}}\).
\end{lemma}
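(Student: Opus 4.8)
The plan is to reduce to the affine case and then realise both sides as one and the same increasing admissible union of Weierstraß domains, exploiting that the completion functor for affinoid dagger spaces was set up (Example~\ref{example:weierstrass-domain}) precisely so as to send a Weierstraß domain to the Weierstraß domain cut out by the same inequalities.

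Since the tube \((Z)_{\mathfrak{X}} = \mathrm{sp}^{-1}(Z)\) and the completion functor are both local on \(\mathfrak{X}\), I would first assume \(\mathfrak{X} = \mathrm{Spwf}(R)\) is affine with \(\Gamma(\mathfrak{X},\mathcal{I}) = (f_1,\ldots,f_r)\). Here \(\widehat{\mathfrak{X}} = \mathrm{Spf}(\widehat{R})\), the dagger generic fibre is \(\mathfrak{X}_K = \mathrm{Sp}(R[1/\varpi])\), and by Lemma~\ref{lemma:completion-commutes-with-inverting} together with the local description of the completion of a dagger space in~\ref{situation:completion-dagger-space} its completion is \(\mathrm{Sp}(\widehat{R[1/\varpi]}) = \mathrm{Sp}(\widehat{R}[1/\varpi]) = \widehat{\mathfrak{X}}_K\), the Raynaud generic fibre of \(\widehat{\mathfrak{X}}\). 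As \(R\otimes_V k = \widehat{R}\otimes_V k\), the (weak) formal schemes \(\mathfrak{X}\) and \(\widehat{\mathfrak{X}}\) share an underlying space and the two specialization maps are compatible through the canonical bijection \(\widehat{\mathfrak{X}}_K \to \mathfrak{X}_K\) on points; in particular the rigid tube \((Z)_{\widehat{\mathfrak{X}}}\) makes sense inside \(\widehat{\mathfrak{X}}_K\) and agrees set-theoretically with \((Z)_{\mathfrak{X}}\).

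Next I would feed the presentation into Lemma~\ref{lemma:equation-and-tube}, writing \((Z)_{\mathfrak{X}} = \bigcup_n W_n\) with \(W_n = \{x \in \mathfrak{X}_K : |f_i(x)| \leq \varepsilon_n\}\) an increasing chain of Weierstraß domains and \(\varepsilon_n \to 1\) from below in \(|K^{\times}| \otimes \mathbb{Q}\). By the definition of a Weierstraß domain of a dagger affinoid (Example~\ref{example:weierstrass-domain}), the completion \(\widehat{W_n}\) is exactly the rigid Weierstraß domain of \(\widehat{\mathfrak{X}}_K\) defined by the same inequalities \(|f_i| \leq \varepsilon_n\). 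The rigid-analytic analogue of Lemma~\ref{lemma:equation-and-tube}, that is, Berthelot's description of the tube in a formal scheme as a nested union of such Weierstraß domains, then yields \(\bigcup_n \widehat{W_n} = \{x \in \widehat{\mathfrak{X}}_K : |f_i(x)| < 1\} = (Z)_{\widehat{\mathfrak{X}}}\). It thus remains to identify the completion \(\widehat{(Z)_{\mathfrak{X}}}\) with \(\bigcup_n \widehat{W_n}\).

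This last identification is where the only real work lies: I must check that the completion functor commutes with the increasing admissible union \(\bigcup_n W_n\). The nested structure makes the gluing transparent, since \(W_n \cap W_m = W_{\min(m,n)}\), so no nontrivial overlap compatibilities intervene; what is genuinely used is that the completion of a dagger space is constructed locally along an admissible covering (this is the content of the existence statement cited in~\ref{situation:completion-dagger-space}) and is compatible with passage to admissible open subspaces. Applying this to the admissible covering \(\{W_n\}\) of \((Z)_{\mathfrak{X}}\) exhibits \(\widehat{(Z)_{\mathfrak{X}}}\) as the rigid analytic space glued from the affinoids \(\widehat{W_n}\) along the identifications \(\widehat{W_n} = \widehat{W_n \cap W_{n+1}}\), i.e. as \(\bigcup_n \widehat{W_n}\); moreover this union carries the correct underlying point set because completion is a bijection on points. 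Combining with the previous paragraph gives \(\widehat{(Z)_{\mathfrak{X}}} = \bigcup_n \widehat{W_n} = (Z)_{\widehat{\mathfrak{X}}}\), and the global case follows by gluing over an affine covering of \(\mathfrak{X}\).
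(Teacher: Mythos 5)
Your proposal is correct and follows essentially the same route as the paper: reduce to the affine case, write the tube as the nested union of Weierstraß domains $\{|f_i|\leq\varepsilon_n\}$ via Lemma~\ref{lemma:equation-and-tube}, and identify the completion of each such dagger Weierstraß domain with the corresponding rigid one (the paper deduces this last point from Lemma~\ref{lemma:completion-vs-presentation}). The only difference is that you spell out the compatibility of completion with the nested admissible union, a step the paper's proof leaves implicit.
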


\begin{proof}
We can assume \(\mathfrak{X}\) is affinoid. In the view
of~\eqref{eq:tube-union-weierstrass},
we further reduce ourselves to
proving that the dagger Weierstraß domains defined by \(|f_i| \leq \varepsilon\)
have the rigid analytic Weierstraß domains defined by \(|f_i| \leq \varepsilon\)
as their completions. This follows from
Lemma~\ref{lemma:completion-vs-presentation}.
\end{proof}

\begin{definition}
\label{definition:quasi-stein}
A \emph{quasi-Stein} dagger space \(U\) is a dagger space that can be written as a
nested admissible union of affinoid subdomains \(U_i\), each \(U_i\) is a
Weierstraß domain of \(U_{i+1}\). Let \(f: U' \to U\) be a morphism of dagger
spaces. We say that \(f\) is a \emph{quasi-Stein morphism}, if there exists an
admissible open covering of \(U\) by affinoid subdomains \(V_i\) such that
\(f^{-1}(V_i)\) are quasi-Stein spaces, i.e., spaces that are nested union of
Weierstraß domains.
\end{definition}

\begin{corollary}
\label{corollary:quasi-stein-immersion}
Let the notation be as in~\ref{situation:tubular-neighborhood}.
The morphism \((Z)_{\mathfrak{X}} \to \mathfrak{X}_{K}\) is a quasi-Stein
morphism.
\end{corollary}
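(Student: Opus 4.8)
The plan is to reduce everything to the affine computation already carried out in Lemma~\ref{lemma:equation-and-tube}. By Definition~\ref{definition:quasi-stein}, in order to show that the inclusion $f\colon (Z)_{\mathfrak{X}} \to \mathfrak{X}_K$ is a quasi-Stein morphism it suffices to produce a \emph{single} admissible covering of the target $\mathfrak{X}_K$ by affinoid subdomains $V_i$ whose preimages $f^{-1}(V_i)$ are nested unions of Weierstraß domains. The natural candidate is the covering induced by an affine covering of the weak formal scheme itself, so no global hypothesis beyond what is already in force in~\ref{situation:tubular-neighborhood} should be needed.

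Concretely, I would first choose an open covering of $\mathfrak{X}$ by affine weak formal schemes $U_i = \mathrm{Spwf}(R_i)$. By Proposition~\ref{proposition:defining-raynaud-generic-fiber}(2), the family $\{U_{iK}\}$ is the inverse image under $\mathrm{sp}$ of the open covering $\{U_i\}$ of $\mathfrak{X}$, hence is an admissible covering of $\mathfrak{X}_K$; moreover each $U_{iK} = \mathrm{Sp}(R_i \otimes K)$ is affinoid. I would then identify the preimages: since $(Z)_{\mathfrak{X}} = \mathrm{sp}^{-1}(Z)$ and $U_{iK} = \mathrm{sp}^{-1}(U_i)$ are both saturated with respect to $\mathrm{sp}$, their intersection is $\mathrm{sp}^{-1}(Z)\cap \mathrm{sp}^{-1}(U_i) = \mathrm{sp}^{-1}(Z \cap U_i)$, which is exactly the tube $(Z\cap U_i)_{U_i}$ of $Z \cap U_i$ inside the affine weak formal scheme $U_i$. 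Applying Lemma~\ref{lemma:equation-and-tube} to $U_i$, with $\mathcal{I}$ restricted to $U_i$ generated by finitely many $f_1,\ldots,f_r \in R_i$, this preimage is an admissible open which is a nested union of Weierstraß domains, i.e.\ a quasi-Stein dagger space. This verifies the defining condition of Definition~\ref{definition:quasi-stein} and would conclude the argument.

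The one point I expect to require care is the verification that the members of the covering genuinely qualify as \emph{affinoid subdomains} of $\mathfrak{X}_K$, rather than merely affinoid admissible opens. Here I would exploit saturatedness: refining each $U_{iK}$ by the principal opens $D(f)$ with $f \in R_i$, the sets $\mathrm{sp}^{-1}(D(f)) = \{x \in U_{iK} : |f(x)|\geq 1\}$ are cut out by the Weierstraß/Laurent-type conditions exhibited in the proof of Proposition~\ref{proposition:defining-raynaud-generic-fiber} (equation~\eqref{eq:inverse-specialization-of-principal-open}), and are therefore affinoid subdomains; these already form an admissible covering of $\mathfrak{X}_K$. Replacing $U_{iK}$ by this finer family makes the affinoid-subdomain requirement manifest, while the same computation $\mathrm{sp}^{-1}(Z)\cap \mathrm{sp}^{-1}(D(f)) = \mathrm{sp}^{-1}(Z\cap D(f))$ together with Lemma~\ref{lemma:equation-and-tube} applied to the affine weak formal scheme $D(f)$ shows that each preimage remains a nested union of Weierstraß domains. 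Apart from this bookkeeping, the corollary is a formal consequence of the compatibility of $\mathrm{sp}$ with intersections and of the affine case.
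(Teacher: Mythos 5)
Your proposal is correct and is exactly the argument the paper intends (the corollary is stated without proof precisely because it follows from Lemma~\ref{lemma:equation-and-tube} by covering \(\mathfrak{X}\) with affine weak formal schemes, taking the induced admissible affinoid covering of \(\mathfrak{X}_K\), and using that the tube is saturated under \(\mathrm{sp}\) so that preimages are the local tubes). Your extra care about the members of the covering being genuine affinoid subdomains is reasonable bookkeeping but not a substantive addition.
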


\begin{instance}
\label{example:affine-space-product-tube}
Let \(R\) be a weakly complete, finitely generated algebra. Let
\(I = (f_1,\ldots,f_r)\) be an open ideal of \(R\). Then the tubular
neighborhood of \(Z = \mathrm{Spec}(R/I)\) in
\(\mathfrak{X}=\mathbb{A}^{n,\dagger}_{R}\) is the
subspace in \(\mathrm{Sp}(R\otimes K\langle t_1,\ldots, t_s\rangle^{\dagger})\)
defined by \(|f_i| \leq 1\) and \(|t_j| < 1\), that is, the product
\((Z)_{\mathfrak{X}} \times D(0;1^{-})^{n}\). Here, \(D(0;1^{-})\) is the rigid
analytic unit \emph{open} disk. As the open unit disk is partially proper, there is no
need to specify whether we are considering the rigid analytic space or the
dagger space, as the two notion are equivalent for partially proper
spaces~\cite[Theorem~2.27]{grosse-klonne:rigid-analytic-spaces-with-overconvergent-structure-sheaf}.
\end{instance}

Next we prove Berthelot's weak fibration theorem in the context of weak formal
scheme following Berthelot's strategy.

\begin{lemma}
\label{lemma:etaleinvarinace-tubular}
Assume there exists a commutative diagram
\begin{equation*}
\xymatrix{
  & \mathfrak{X} \ar[dd]^{f} \\
  Z \ar[ru]^{u} \ar[rd]_{v} & \\
  & \mathfrak{Y}
}
\end{equation*}
in which \(f\) is a morphism of weak formal schemes, \(u\) and \(v\) are closed
embeddings of \(Z\) into the special fibers of \(\mathfrak{X}\) and
\(\mathfrak{Y}\) respectively. Assume that the completion \(\widehat{f}\) of
\(f\) is an étale morphism. Then
\(f\) induces an isomorphism of tubular neighborhoods
\begin{equation*}
(Z)_{\mathfrak{X}} \xrightarrow{\sim} (Z)_{\mathfrak{Y}}
\end{equation*}
\end{lemma}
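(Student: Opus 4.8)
The plan is to reduce to an explicit local model and then build an inverse to the tube map by a Hensel-type construction, the only delicate point being that the Hensel solution must be shown to be \emph{overconvergent}. Since being an isomorphism of dagger spaces is local on the target, and since the formation of $(Z)_{\mathfrak{X}}$ depends only on an open neighborhood of $Z$ in $\mathfrak{X}$ (see~\ref{situation:tubular-neighborhood}) while $\widehat f$ étale is in particular open, I would first localize to the affine case $\mathfrak{Y} = \mathrm{Spwf}(A)$, $\mathfrak{X} = \mathrm{Spwf}(B)$ with $A \to B$ inducing an étale map $\widehat A \to \widehat B$ of completions. Using the local structure theorem for étale morphisms applied to $\widehat f$, and shrinking $\mathfrak{X}$ further around $Z$, I would arrange that $B$ is standard étale over $A$: up to completion it is of the form $\widehat B = (\widehat A[t]/(P(t)))_h$ with $P$ monic and $P'$ a unit, where I may take $P \in A[t]$ and the adjoined coordinate $t$ together with $1/P'(t)$ to be overconvergent functions on $\mathfrak{X}_K$. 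The closed immersion $u\colon Z \hookrightarrow \mathfrak{X}$ lying over $v\colon Z \hookrightarrow \mathfrak{Y}$ singles out a root $\bar t_0$ of $\bar P$ over $Z$, which is simple because $P'$ is a unit. Note that $f_K$ does restrict to a map $(Z)_{\mathfrak{X}} \to (Z)_{\mathfrak{Y}}$, since $f \circ u = v$ forces $\mathrm{sp}(f_K(x)) = f(\mathrm{sp}(x)) \in Z$.

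The heart of the argument is to invert $f_K$. A point $y$ of $(Z)_{\mathfrak{Y}}$ specializes into $Z$, where $\bar P$ acquires the simple root $\bar t_0$; lifting $\bar t_0$ to a function $t_0 \in A$, one has, using Lemma~\ref{lemma:equation-and-tube}, that $|P(t_0)(y)| < 1$ while $|P'(t_0)(y)| = 1$ for every $y \in (Z)_{\mathfrak{Y}}$, because $P(t_0)$ reduces into the ideal of $Z$ whereas $P'(t_0)$ reduces to a unit there. Hensel's lemma then produces over $(Z)_{\mathfrak{Y}}$ a unique root $\tau$ of $P$ with $|\tau - t_0| < 1$, and the assignment $t \mapsto \tau$, $a \mapsto a$ (for $a \in A$) defines a set-theoretic section $s\colon (Z)_{\mathfrak{Y}} \to (Z)_{\mathfrak{X}}$ inverse to $f_K$: uniqueness of the Hensel root makes both composites the identity on points, and on functions one checks directly that $f_K^{\ast}\tau = t$ and $s^{\ast}a = a$.

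The main obstacle, and the one feature absent from the formal or rigid-analytic situation, is to show that $s$ is a morphism of \emph{dagger} spaces, i.e.\ that $\tau$ is overconvergent. By Lemma~\ref{lemma:equation-and-tube} the tube $(Z)_{\mathfrak{Y}}$ is a nested union of Weierstraß domains $W_n = \{|f_i| \le \varepsilon_n\}$ with $\varepsilon_n \uparrow 1$, and on $W_n$ one has a uniform bound $|P(t_0)| \le \delta_n < 1$ together with $|P'(t_0)| = 1$. The Newton iterates are overconvergent functions (polynomial expressions in the overconvergent $t_0$, the $f_i$, and $1/P'(t_0)$), and the geometric rate of convergence governed by $\delta_n$ forces the limit $\tau$ to converge not merely on $W_n$ but on a slightly larger Weierstraß domain $W_{n'}$ with $\varepsilon_{n'} > \varepsilon_n$; this is precisely the statement that $\tau$ lies in the dagger structure sheaf. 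Here the quasi-Stein structure of the inclusion $(Z)_{\mathfrak{Y}} \hookrightarrow \mathfrak{Y}_K$ (Corollary~\ref{corollary:quasi-stein-immersion}) and the overconvergence built into dagger algebras are what make the estimate possible, and I expect this to be the step requiring the most care.

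Granting this estimate, $s$ is a dagger morphism, so $f_K$ is an isomorphism of dagger spaces. As a consistency check one can pass to completions: by Lemma~\ref{lemma:completing-tubes-get-tubes} the completion of $f_K$ is the rigid-analytic tube map $\widehat f_K$, which is an isomorphism by Berthelot's fibration theorem for formal schemes, and since dagger algebras inject into their completions (Lemma~\ref{lemma:completion-vs-presentation}) a dagger morphism is determined by the map it induces on completions. Hence the identities $s \circ f_K = \mathrm{id}$ and $f_K \circ s = \mathrm{id}$, once verified on completions, descend to the dagger level, completing the proof.
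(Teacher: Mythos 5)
The paper's own proof is a three-line reduction: complete everything, quote Berthelot's Proposition~1.3.1 for the isomorphism of the rigid-analytic tubes, and then invoke Gro{\ss}e-Kl\"onne's Theorem~2.19(4), which says a morphism of dagger spaces is an isomorphism if and only if its completion is. Your Hensel-theoretic construction is a genuinely different route, but it has a real gap at exactly the step you flag as delicate. The inference ``the Newton iterates are overconvergent and converge geometrically, hence the limit $\tau$ is overconvergent'' is invalid: the dagger algebra is not complete for the affinoid norm, and a geometrically convergent sequence of overconvergent (even polynomial) functions can have a non-overconvergent limit --- for instance the partial sums of $\sum_j \varpi^j t^{\,j\cdot j!}$ converge geometrically in $V\langle t\rangle$, yet the limit lies in no $\mathfrak{T}_1(\rho)$ with $\rho>1$. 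What is actually needed is a \emph{uniform} bound on the $\rho$-Gau{\ss} norms of all the iterates for one fixed $\rho>1$, i.e.\ the Henselian property of weakly complete finitely generated algebras along $(\varpi)$; this is true (it is in Monsky--Washnitzer and van der Put, and is weaker than the Artin approximation statement (2.4.2) cited elsewhere in the paper), but it does not follow from the rate of convergence on $W_n$, and the phrase ``converge on a slightly larger Weierstra{\ss} domain $W_{n'}$'' also conflates enlarging the tube radius $\varepsilon_n$ with overconvergence in the ambient coordinates of $A$. A secondary, fixable gap is that descending the standard \'etale presentation of $\widehat{B}$ over $\widehat{A}$ to data $P\in A[t]$ at the weak formal level itself requires an approximation argument.

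Your closing ``consistency check'' is in fact nearly the paper's entire proof, but the fact you invoke --- that dagger algebras inject into their completions, so a dagger morphism is determined by its completion --- only gives \emph{uniqueness} of a dagger inverse, not existence: it does not let you descend the rigid-analytic inverse of $\widehat{f}_K$ to the dagger level. The statement that closes the argument with no Hensel estimate at all is the stronger Theorem~2.19(4) of Gro{\ss}e-Kl\"onne (isomorphism iff the completion is an isomorphism), combined with Lemma~\ref{lemma:completing-tubes-get-tubes} and Berthelot's Proposition~1.3.1. Either cite that theorem, or replace the rate-of-convergence argument with the Henselian property of weakly complete algebras; as written, the overconvergence of $\tau$ is not established.
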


\begin{proof}
By Lemma~\ref{lemma:completing-tubes-get-tubes}, the completion of the
tubular neighborhoods \((Z)_{\mathfrak{X}}\) and \((Z)_{\mathfrak{Y}}\) are the
rigid analytic tubular neighborhoods \((Z)_{\widehat{\mathfrak{X}}}\) and
\((Z)_{\widehat{\mathfrak{Y}}}\).
By~\cite[Proposition~1.3.1]{berthelot:rigid-cohomology-compact-support},
these rigid analytic tubular neighborhood are isomorphic. The lemma then follows
from~\cite[Theorem~2.19(4)]{grosse-klonne:rigid-analytic-spaces-with-overconvergent-structure-sheaf},
which asserts that a morphism of dagger spaces is an isomorphism if and only if
its completion is.
\end{proof}

\begin{lemma}
\label{lemma:weak-fibration-theorem}
Assume there exists a commutative diagram
\begin{equation*}
\xymatrix{
  & \mathfrak{X} \ar[dd]^{f} \\
  Z \ar[ru]^{u} \ar[rd]_{v} & \\
  & \mathfrak{Y}
}
\end{equation*}
in which \(f\) is a morphism of admissible weak formal schemes, \(u\) and \(v\) are closed
embeddings of \(Z\) into the special fibers of \(\mathfrak{X}\) and
\(\mathfrak{Y}\) respectively. Assume that the completion \(\widehat{f}\) of
\(f\) is a smooth morphism of formal schemes
relative dimension \(n\). Then
there exists an admissible open covering \(\{V_{\alpha}\}\) of
\((Z)_{\mathfrak{Y}}\) such that, if \(U_{\alpha}\) is the preimage of
\(V_{\alpha}\) under the natural morphism
\((Z)_{\mathfrak{X}} \to (Z)_{\mathfrak{Y}}\), then there is an
\(V_{\alpha}\)-isomorphism
\begin{equation*}
U_{\alpha} \xrightarrow{\sim} V_{\alpha} \times D(0;1^{-})^n
\end{equation*}
of dagger spaces over \(V_{\alpha}\).
Cf.~\cite[Théorème~1.3.2]{berthelot:rigid-cohomology-compact-support}.
\end{lemma}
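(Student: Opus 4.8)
The plan is to follow Berthelot's original strategy: factor \(f\) locally, up to an étale morphism, as the projection from relative weak formal affine space, and then combine the étale invariance of tubes (Lemma~\ref{lemma:etaleinvarinace-tubular}) with the explicit computation of the tube in \(\mathbb{A}^{n,\dagger}\) (Example~\ref{example:affine-space-product-tube}). Since the conclusion only asserts the existence of an admissible covering, I may work Zariski-locally on \(\mathfrak{X}\) near \(Z\); so I first reduce to the case where \(\mathfrak{X} = \mathrm{Spwf}(B)\) and \(\mathfrak{Y} = \mathrm{Spwf}(A)\) are affine with \(f(\mathfrak{X}) \subseteq \mathfrak{Y}\). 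Writing \(\mathfrak{X}_0, \mathfrak{Y}_0\) for the special fibers and \(f_0 \colon \mathfrak{X}_0 \to \mathfrak{Y}_0\) for the reduction of \(\widehat f\) (which is smooth of relative dimension \(n\)), the goal becomes to produce, after shrinking, a morphism of weak formal schemes \(g \colon \mathfrak{X} \to \mathbb{A}^{n,\dagger}_{\mathfrak{Y}}\) whose completion is étale and under which \(Z\) maps into the zero section.

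The heart of the matter is the construction of the coordinates. Fix \(z \in Z\). Because \(v_0 \colon Z \to \mathfrak{Y}_0\) is a closed immersion it is unramified, so \(\Omega^1_{Z/\mathfrak{Y}_0} = 0\); the conormal sequence for \(u_0 \colon Z \hookrightarrow \mathfrak{X}_0\) then shows that the map
\[
\mathcal{I}_Z/\mathcal{I}_Z^2 \longrightarrow \Omega^1_{\mathfrak{X}_0/\mathfrak{Y}_0}|_Z
\]
is surjective, where \(\mathcal{I}_Z\) is the ideal of \(Z\) in \(\mathfrak{X}_0\). Since \(f_0\) is smooth of relative dimension \(n\), the sheaf \(\Omega^1_{\mathfrak{X}_0/\mathfrak{Y}_0}\) is locally free of rank \(n\); choosing a basis near \(z\) and lifting through the surjection, I obtain functions \(x_1,\dots,x_n \in \mathcal{I}_{Z,z}\) whose differentials \(dx_i\) form a basis of \(\Omega^1_{\mathfrak{X}_0/\mathfrak{Y}_0}\) in a Zariski neighborhood of \(z\) (by Nakayama). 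Lifting the \(x_i\) to elements of \(B\) defines \(g \colon \mathfrak{X} \to \mathbb{A}^{n,\dagger}_{\mathfrak{Y}}\), \(t_i \mapsto x_i\); as this is a morphism between two formal schemes smooth of relative dimension \(n\) over \(\mathfrak{Y}\) inducing an isomorphism on relative differentials at \(z\), its completion \(\widehat g\) is étale near \(z\) by the differential criterion, and shrinking \(\mathfrak{X}\) makes \(\widehat g\) étale. By construction \(x_i|_Z = 0\), so the composite of \(v\) with the zero section embeds \(Z\) into the special fiber of \(\mathbb{A}^{n,\dagger}_{\mathfrak{Y}}\) compatibly with \(u\) and \(g\). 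Lemma~\ref{lemma:etaleinvarinace-tubular} then gives \((Z)_{\mathfrak{X}} \xrightarrow{\sim} (Z)_{\mathbb{A}^{n,\dagger}_{\mathfrak{Y}}}\), and Example~\ref{example:affine-space-product-tube} identifies the target with \((Z)_{\mathfrak{Y}} \times D(0;1^{-})^n\) over \((Z)_{\mathfrak{Y}}\).

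Finally I assemble the covering. The local construction is valid over an open \(\mathfrak{X}_\alpha \subseteq \mathfrak{X}\) meeting \(Z\) in \(Z_\alpha = Z \cap \mathfrak{X}_\alpha\); as the \(\mathfrak{X}_\alpha\) cover \(Z\), and as \(v_0\) is a closed immersion, I may write \(Z_\alpha = Z \cap W_\alpha\) with \(W_\alpha \subseteq \mathfrak{Y}\) open and set \(V_\alpha = (Z_\alpha)_{\mathfrak{Y}} = \mathrm{sp}^{-1}(Z_\alpha)\). By Proposition~\ref{proposition:defining-raynaud-generic-fiber}(1)--(2) these are admissible opens forming an admissible covering of \((Z)_{\mathfrak{Y}}\); since specialization commutes with \(f\) and \(f_0 \circ u_0 = v_0\), the preimage \(U_\alpha\) of \(V_\alpha\) in \((Z)_{\mathfrak{X}}\) equals \((Z_\alpha)_{\mathfrak{X}_\alpha}\), to which the previous paragraph applies and yields the desired \(V_\alpha\)-isomorphism \(U_\alpha \cong V_\alpha \times D(0;1^{-})^n\). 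The main obstacle is the simultaneous requirement on the \(x_i\)—that they cut out a relative coframe (so \(\widehat g\) is étale) and vanish on \(Z\) (so the tube sits over the zero section); this is exactly what the vanishing \(\Omega^1_{Z/\mathfrak{Y}_0}=0\), forced by \(v\) being a closed immersion, makes possible. The remaining verifications—étaleness of \(\widehat g\) and the gluing of the local product decompositions over the admissible covering—are routine given the lemmas already established.
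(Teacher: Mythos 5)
Your proposal is correct and follows the same overall Berthelot-style skeleton as the paper (reduce to finding an \'etale map to \(\mathbb{A}^{n,\dagger}_{\mathfrak{Y}}\) sending \(Z\) into the zero section, then invoke Lemma~\ref{lemma:etaleinvarinace-tubular} and Example~\ref{example:affine-space-product-tube}), but you handle the key technical step in a genuinely different and more elementary way. The paper constructs the \'etale factorization only at the level of the completions \(\widehat{\mathfrak{X}}\to\widehat{\mathbb{A}}{}^n\times\widehat{\mathfrak{Y}}\), where the coordinates a priori live in \(\widehat{A}\) rather than \(A\); it then invokes van der Put's Artin approximation theorem for weakly complete, finitely generated algebras to replace that map by one defined over the weak formal schemes, and — since the approximation is only known to agree with the original map on the special fiber — it must re-establish \'etaleness by proving flatness via the local criterion and the Fujiwara–Kato machinery (Zariski-type pairs, a Tor-vanishing computation using admissibility). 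You sidestep both of these ingredients by observing that the conditions on the coordinates \(x_i\) (vanishing on \(Z\), and \(dx_i\) trivializing \(\Omega^1_{\mathfrak{X}_0/\mathfrak{Y}_0}\) near a point of \(Z\)) are conditions modulo \(\varpi\), so the \(x_i\) can be chosen directly as lifts in the weakly complete algebra \(B\) itself; \'etaleness of \(\widehat{g}\) then follows from the differential criterion for a morphism of smooth formal \(\widehat{\mathfrak{Y}}\)-schemes, applied level by level modulo \(\varpi^m\). This buys a shorter, more self-contained argument; what it costs is the small verification, which you should make explicit, that \(t_i\mapsto x_i\) really extends from \(A[t_1,\ldots,t_n]\) to the weak completion \(A\langle t_1,\ldots,t_n\rangle^{\dagger}\to B\) — this is exactly weak completeness of \(B\), i.e.\ the universal property of the integral Monsky--Washnitzer algebra already used in~\ref{situation:many-pieces}, so it is available but currently implicit. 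Your assembly of the admissible covering at the end is also slightly more careful than the paper's one-line appeal to Zariski locality, and is compatible with Proposition~\ref{proposition:defining-raynaud-generic-fiber}.
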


\begin{proof}
The problem is Zariski local on \(Z\), so we are free to shrink.
On the level of formal schemes, one can find a factorization of the morphism
\(\widehat{f}\)
\begin{equation*}
\mathfrak{X} \xrightarrow{h} \widehat{\mathbb{A}}^{n}_V \times \mathfrak{Y} \xrightarrow{\mathrm{pr}_2} \mathfrak{Y},
\end{equation*}
where \(h\) is an étale morphism of formal schemes, and \(Z\) embeds in the zero
section of \(\widehat{\mathbb{A}}^{n}_V \times \mathfrak{Y}\). The second arrow
descends to weak formal schemes, but \(h\) does not necessarily descend.
However, by the Artin approximation theorem for weakly complete, finitely generated
algebras, see \cite[(2.4.2)]{van-der-put:monsky-washnitzer-cohomology}, we can
find an approximation \(g\) of \(h\), which agrees with \(h\) on special fibers,
and such that \(g\) descends to weak formal
schemes. If we could show \(g\) is étale, then we can apply
Lemma~\ref{lemma:etaleinvarinace-tubular} above and conclude the proof. Note
that \(g\) reduces to an étale morphism on the special fiber, so in order to
prove \(g\) itself is étale, we need prove that it is flat. This can be
deduced from a suitable version of local criterion of flatness. Let
me collect all needed information. By Fulton's theorem, weakly complete,
finitely generated algebras are Noetherian, hence they satisfy the Artin-Rees
lemma, i.e., of type (APf) defined
in~\cite[7.4.11 above]{fujiwara-kata:foundation-rigid-geometry}. Since
the completions of the affine weak formal schemes are formal spectra of complete
\(V\)-algebras, they are of \(\varpi\)-adic Zariski type,
see~\cite[Proposition 7.3.5]{fujiwara-kata:foundation-rigid-geometry}.
In particular, if
\(\mathfrak{X} = \mathrm{Spwf}(A)\), \(\mathfrak{Y} = \mathrm{Spwf}(B)\),
then \(\widehat{A}\) is
\(\varpi(\widehat{B}\langle t_1,\ldots t_n\rangle) \cdot \widehat{A}\)-Zariski,
i.e., \(\varpi{}\widehat{A}\)-Zariski.
Taking \(I = (\varpi)\),
we wish to apply~\cite[Proposition~8.3.8]{fujiwara-kata:foundation-rigid-geometry},
condition (c), so we need to verify that
\(\mathrm{Tor}^{\widehat{B}\langle{}t_1,\ldots,t_n\rangle}_{1}(\widehat{A},B_0[x_1,\ldots,x_n])\)
is zero, where \(B_0 = \widehat{B}/\varpi\). Since we have assumed our
weak formal schemes are admissible, \(\widehat{A}\) has no \(\varpi\)-torsion.
Thereby tensoring the exact sequence
\begin{equation*}
0 \to \widehat{B}\langle t_1,\ldots, t_n \rangle \xrightarrow{\cdot \varpi} \widehat{B}\langle t_1, \ldots, t_n \rangle \to B_0[t_1,\ldots,t_n]\to 0
\end{equation*}
with \(\widehat{A}\) preserves exactness. This checks the vanishing of the first
Tor group. Thereby we have verified all needed conditions to ensure the
flatness. This completes the proof.
\end{proof}

\section{The Monsky--Washnitzer site}
\label{sec:org925345b}

In this section we define the Monsky--Washnitzer site for a variety \(X\) over
\(k\) (relative to \(V\)). We also prove some basic functoriality of the topos
of this site that are needed in later sections.
\begin{definition}
\label{definition:widening}
Let \(X\) be a \(k\)-variety. A \emph{widening} for \(X\) is a pair \((P,z:Z \to X)\)
(which is sometimes referred to as \(P\) if no confusion is likely), where \(P\)
is an admissible weak formal scheme, \(Z\) a subscheme of \(P\) defined by a
coherent open ideal, and \(z\) a morphism of \(k\)-schemes. We say \(P\) is
\emph{affine} if \(z\) is an affine morphism; we say \(P\) is absolutely affine if
\(P\) is affine. To paraphrase, an \emph{absolutely affine widening} \(T\) of \(X\)
consists of
\begin{itemize}
\item a morphism of \(k\)-schemes \(\mathrm{Spec}(R) \to X\),
\item a weakly complete, finitely generated, flat \(V\)-algebra \(A\), and
\item a surjective ring homomorphism \(A \to R\).
\end{itemize}
A \emph{morphism} of widenings
\(u: (P_1, z_1:Z_1\to X) \to (P_2, z_2:Z_2 \to X)\)
is a commutative diagram
\begin{equation*}
\xymatrix{
 P_1  \ar[d]_{u}  & Z_1 \ar[l] \ar[d]^{u_0} \ar[r]^{z_1} & X \ar@{=}[d]\\
 P_2      & Z_2 \ar[l]\ar[r]_{z_2} & X
}.
\end{equation*}
When a widening is absolutely affine, we also use the ring-theoretic notation
\(A \to R \leftarrow \mathcal{O}_X\) to denote it, instead of the geometric notation.

An \emph{enlargement} is a widening \((P,Z,z)\) such that
\(Z\) agrees with \(P\otimes_V k\). Thus an absolutely
affine widening \(T = (A \to R \leftarrow \mathcal{O}_X)\) is an \emph{absolutely
affine enlargement} of \(X\) if \(R = A/\varpi{}A\).
\end{definition}

\begin{situation}
\label{situation:remark-terminology}
\textbf{Remark on terminologies.}
In Ogus's paper~\cite{ogus:convergent-topos}, a \emph{widening} is a pair
\((P,z:Z\to X)\), where \(P\) is a flat formal scheme over \(V\) with \(Z\) as a
subscheme of definition (i.e., \(P\) is \(\mathcal{I}_{Z}\)-adically complete).
A pair \((P,z:Z \to X)\) with \(P\) admissible and \(Z \subset P\otimes_V k\) is
known as a \emph{prewidening} (following
Shiho~\cite{shiho:crystalline-fundamental-group-2}). In the convergent
topos, a prewidening and its corresponding widening represent the same sheaf.
From this perspective, our widening seems to be better called a prewidening, as
weak formal schemes are always \(\varpi\)-adic. But it does not seem to
be very useful to further define weak formal schemes that are not
\(\varpi\)-adic, so we shall stick with the current terminology.
\end{situation}

One attempts to use widenings to define a site. But there is no reasonable
meaning of fiber products of widenings. This is caused by the presence of
\(\varpi\)-torsions in the fiber product of weakly complete, finitely generated
algebras.

\begin{instance}[No obvious fiber product for widenings]
\label{example:no-obvious-fiber-product-for-widening}
Consider the following diagram of widenings of \(X = \mathrm{Spec}(k)\).
(For the ease of thinking, I also put the ``rigid analytic'' counter part of the
picture on the left.)
\begin{equation*}
\xymatrix{
K\langle s \rangle^{\dagger}   & V \langle s \rangle^{\dagger} \ar[l] \ar[r]^{s,\varpi\mapsto 0} & k \ar@{=}[d] \\
K\langle t,s\rangle^{\dagger} \ar[u]^{t\mapsto \varpi} \ar[d]_{t\mapsto 0}  & V\langle t,s \rangle^{\dagger}
\ar[l] \ar[r]_{\varpi\mapsto 0}^{\quad t,s\mapsto 0}\ar[u]^{t\mapsto \varpi} \ar[d]_{t\mapsto 0} & k \\
K\langle s \rangle^{\dagger}  & V \langle s \rangle^{\dagger} \ar[l] \ar[r]_{s, \varpi\mapsto 0}& k \ar@{=}[u]
}
\end{equation*}
In this case, the completed tensor product of the left column is zero.
Geometrically, the two homomorphisms correspond to \(s \mapsto (s,\varpi)\)
and \(s \mapsto (s,0)\), two disjoint embeddings of the closed disk into the
2-dimensional polydisk. However, the fiber product of the special fiber is
nonzero. Note that the fiber product of the middle column equals \(V[s]/\varpi\),
which is \(\varpi\)-torsion. Trying to flatten it will result the zero ring.
\end{instance}

\begin{situation}
\label{situation:absolute-product-widening}
\textbf{Absolute product exists.}
Nevertheless, the notion of absolute product of widenings can be defined as
follows. If \(T_1 = (A_1,R_1)\) and \(T_2 = (A_2,R_2)\), then we define
\(T_1 \times T_2 = (A_1 \otimes^{w}_V A_2, R_1 \otimes_R R_2)\), where
\(\mathrm{Spec}(R)\) is an open affine scheme on which the morphisms
\begin{equation*}
\mathrm{Spec}(R_i) \to X
\end{equation*}
factor into.
\end{situation}
\begin{lemma}
\label{lemma:flatness-tensor-product}
Let \(R\) be a finite type \(k\)-algebra. Assume that we are given
\begin{itemize}
\item flat, \(\varpi\)-adic, \(\varpi\)-adically complete \(V\)-algebras \(A\),
\(A_1\) and \(A_2\), such that \(A/\varpi{}A = A_i/\varpi{}A_i = R\);
\item ring homomorphisms \(\varphi_i : A \to A_i\), such that \(\varphi_i\) modulo
\(\varpi\) agrees with the identity homomorphism \(\mathrm{Id}: R \to R\).
\end{itemize}
Then \(A_1 \widehat{\otimes}_A A_2\) is flat over \(V\).
\end{lemma}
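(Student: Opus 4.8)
The plan is to reduce everything to torsion-freeness and a single \(\mathrm{Tor}\) computation. Since \(V\) is a discrete valuation ring, a \(V\)-module is flat exactly when it is \(\varpi\)-torsion-free, so it suffices to prove that multiplication by \(\varpi\) is injective on \(B := A_1 \widehat{\otimes}_A A_2\). I would first handle the ordinary (uncompleted) tensor product \(M := A_1 \otimes_A A_2\), and only afterwards pass to the completion.

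The heart of the matter is to show that \(M\) is \(\varpi\)-torsion-free. Because \(A_1\) is \(V\)-flat, there is a short exact sequence of \(A\)-modules \(0 \to A_1 \xrightarrow{\varpi} A_1 \to A_1/\varpi A_1 \to 0\), and by hypothesis \(A_1/\varpi A_1 = R\). The crucial point is that, viewed as an \(A\)-module through \(\varphi_1\), this quotient \(R\) is nothing but \(A/\varpi A\): indeed \(\varphi_1\) reduces to \(\mathrm{Id}\) modulo \(\varpi\), so the \(A\)-action factors through the surjection \(A \to A/\varpi A = R\). Tensoring the sequence with \(A_2\) over \(A\) and reading off the long exact \(\mathrm{Tor}\)-sequence, the kernel of \(\varpi\) on \(M\) is a quotient of \(\mathrm{Tor}_1^A(R, A_2) = \mathrm{Tor}_1^A(A/\varpi A, A_2)\); so it suffices to show this group vanishes. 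Resolving \(A/\varpi A\) by \(0 \to A \xrightarrow{\varpi} A \to A/\varpi A \to 0\) (legitimate since \(A\) is \(V\)-flat) identifies it with the \(\varpi\)-torsion of \(A_2\), which is zero because \(A_2\) is \(V\)-flat. Hence \(\varpi\) is injective on \(M\). (Conceptually this is the statement that \(\varphi_2\) is a flat map, its reduction being the flat map \(\mathrm{Id}\) and \(\mathrm{Tor}_1\) vanishing; but the direct computation avoids invoking the local flatness criterion and any Noetherian hypothesis.)

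I expect this bookkeeping of the \(A\)-module structure to be the main conceptual obstacle, and it is exactly where the hypotheses bite: it is the identification \(A_i/\varpi A_i = R = A/\varpi A\) (equivalently \(\varphi_i \equiv \mathrm{Id}\) mod \(\varpi\)) that lets me rewrite \(\mathrm{Tor}_1^A(A_1/\varpi A_1, A_2)\) as \(\mathrm{Tor}_1^A(A/\varpi A, A_2)\). Without it the argument genuinely fails — this is precisely the mechanism behind Example~\ref{example:no-obvious-fiber-product-for-widening}, where the two reductions do not agree, the relevant \(\mathrm{Tor}\) is nonzero, and the naive tensor product is pure torsion. Everything else is formal.

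Finally I would pass from \(M\) to \(B\). A direct check gives \((A_1 \otimes_A A_2)/\varpi^n \cong A_1/\varpi^n \otimes_{A/\varpi^n} A_2/\varpi^n\) for all \(n\), so \(B = A_1 \widehat{\otimes}_A A_2 = \varprojlim_n (A_1 \otimes_A A_2)/\varpi^n\) is simply the \(\varpi\)-adic completion \(\widehat{M}\) of \(M\). It then remains to observe that \(\varpi\)-adic completion preserves \(\varpi\)-torsion-freeness: if \(x = (x_n) \in \widehat{M}\) satisfies \(\varpi x = 0\), then any lift \(\tilde{x}_n \in M\) of \(x_n\) has \(\varpi \tilde{x}_n \in \varpi^n M\), whence \(\tilde{x}_n \in \varpi^{n-1} M\) by torsion-freeness of \(M\), so \(x_{n-1} = 0\) for every \(n\), forcing \(x = 0\). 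Thus \(B\) is \(\varpi\)-torsion-free, hence flat over \(V\).
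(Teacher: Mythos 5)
Your proof is correct, and it takes a genuinely different route from the paper's. The paper reduces modulo $\varpi^n$, observes that $A_1\widehat{\otimes}_A A_2=\varprojlim_n B_n$ with $B_n=A_1^{(n)}\otimes_{A^{(n)}}A_2^{(n)}$, argues that a torsion element of the limit would force some $B_n$ to be non-flat over $V/\varpi^n$, and then establishes flatness of each $B_n$ by citing the local criterion of flatness from \emph{FGA Explained}. You instead work with the uncompleted tensor product $M=A_1\otimes_A A_2$, prove $\varpi$-torsion-freeness by an explicit computation of $\mathrm{Tor}_1^A(A_1/\varpi A_1,A_2)$ --- correctly isolating the one place the hypothesis $\varphi_i\equiv\mathrm{Id}\pmod{\varpi}$ is used, namely the identification of the $A$-module $A_1/\varpi A_1$ with $A/\varpi A$ --- and then pass to the completion by the elementary observation that $\varpi$-adic completion preserves $\varpi$-torsion-freeness. (Your identification $B=\widehat{M}$ agrees with the paper's description of the completed tensor product as $\varprojlim_n B_n$, since $B_n=M/\varpi^nM$.) What your approach buys is self-containedness: it avoids the external citation and any Noetherian or finiteness hypotheses, and it replaces the paper's somewhat terse torsion-over-$V/\varpi^n$ step (where one must remember that flat $V/\varpi^n$-modules do have elements killed by $\varpi^r$, just only the ones in $\varpi^{n-r}B_n$) with a transparent inverse-limit argument. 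What the paper's approach buys is brevity and a formulation that sits naturally alongside the other truncation-and-limit arguments in the text. Both are sound; yours is, if anything, the cleaner writeup.
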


\begin{proof}
Set \(A_i^{(n)} = A_i/\varpi^{n}A_i\), and similarly set
\(A^{(n)} = A/\varpi^{n}A\). Then the completed tensor product is the inverse
limit of \(B_n = A_1^{(n)} \otimes_{A^{(n)}} A_2^{(n)}\). Any \(\varpi\)-power
torsion of the completed tensor product \(B\) would then give rise elements in
\(b_n \in B_n\), for \(n\)-sufficiently large, that is killed by \(\varpi^{r}\).
But \(\varpi^{r} \in V/\varpi^{n}V\), and \(b_n\) becomes a torsion element. So
\(B_n\) would not be flat. Therefore, it thus suffices to prove
\(B_n = A_1^{(n)} \otimes_{A^{(n)}} A_2^{(n)}\) is flat over \(V/\varpi^{n}\). But
this follows from a suitable version of local criterion of flatness
\cite[p132, Lemma 5.21(3)]{fantechi-et-al:fga-explained}.
\end{proof}

\begin{corollary}
\label{corollary:fiber-product-enlargement}
Let \(T = (A,R)\), \(T_1 = (A_1, R_1)\) and \(T_2 = (A_2, R_2)\)
be three enlargements. Assume we are given morphisms \(T_1 \to T\) and
\(T_2 \to T\) that induces open immersions on special fibers. Then
\begin{equation*}
(A_1 \otimes^{w}_A A_2, R_1 \otimes_R R_2)
\end{equation*}
has a natural structure of an absolutely affine enlargement and represents the fiber product
\(T_1 \times_T T_2\) in the category of absolutely affine enlargements.
\end{corollary}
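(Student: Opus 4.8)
The plan is to verify three things in turn: that the pair $(A_1 \otimes^{w}_A A_2, R_1 \otimes_R R_2)$ really is an absolutely affine enlargement (that the special fibre of $A_1 \otimes^{w}_A A_2$ is $R_1 \otimes_R R_2$, and that $A_1 \otimes^{w}_A A_2$ is admissible, i.e.\ flat over $V$), and that it enjoys the universal property of the fibre product $T_1 \times_T T_2$. The identification of the special fibre and the universal property are formal; the flatness of $A_1 \otimes^{w}_A A_2$ is the crux, and it is here that Lemma~\ref{lemma:flatness-tensor-product} enters, the one extra ingredient beyond that lemma being that an open immersion is flat.

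First I would compute the special fibre. By construction (\ref{situation:weak-completion-tensor-product}) the algebra $A_1 \otimes^{w}_A A_2$ is the weak completion of $A_1 \otimes_A A_2$ inside its $\varpi$-adic completion $\widehat{A}_1 \widehat{\otimes}_{\widehat{A}} \widehat{A}_2$. Since $A_i/\varpi A_i = R_i$ and $A/\varpi A = R$, base change gives $(A_1 \otimes_A A_2)/\varpi = R_1 \otimes_R R_2$. As weak completion does not alter the reduction modulo any power of $\varpi$ (this is the computation underlying the proof of Lemma~\ref{lemma:completion-commutes-with-inverting}), we obtain $(A_1 \otimes^{w}_A A_2)/\varpi = R_1 \otimes_R R_2$; hence, once admissibility is known, the pair is an absolutely affine enlargement, its structure morphism to $X$ being the common composite $\mathrm{Spec}(R_1 \otimes_R R_2) \to \mathrm{Spec}(R) \to X$.

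The main obstacle is admissibility. Write $B = \widehat{A}$ and $B_i = \widehat{A}_i$, which are $\varpi$-adically complete, flat, Noetherian $V$-algebras with $B/\varpi = R$ and $B_i/\varpi = R_i$. Because $A_1 \otimes^{w}_A A_2$ is weakly complete, hence $\varpi$-adically separated, it embeds as a subalgebra of $\widehat{A}_1 \widehat{\otimes}_{\widehat A} \widehat{A}_2 = B_1 \widehat{\otimes}_B B_2$, so it suffices to show that $B_1 \widehat{\otimes}_B B_2$ is $\varpi$-torsion free. I would first prove that $B_2$ is flat over $B$: the reduction $B_2/\varpi = R_2$ is flat over $B/\varpi = R$ since $R \to R_2$ is an open immersion, while $\mathrm{Tor}^B_1(B/\varpi, B_2) = B_2[\varpi] = 0$, computed from $0 \to B \xrightarrow{\varpi} B \to B/\varpi \to 0$ together with the $\varpi$-torsion freeness of $B_2$; the local criterion of flatness for the $\varpi$-adically complete Noetherian ring $B$ (the module $B_2$ being $\varpi$-adically separated, hence ideally separated) then yields flatness. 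Consequently $B_2/\varpi^n$ is flat over $B/\varpi^n$ for every $n$, so
\[
C_n := (B_1 \widehat{\otimes}_B B_2)/\varpi^n = B_1/\varpi^n \otimes_{B/\varpi^n} B_2/\varpi^n
\]
is flat over $B_1/\varpi^n$, hence over $V/\varpi^n$. Running the argument of Lemma~\ref{lemma:flatness-tensor-product} --- a nonzero $\varpi$-power torsion element of $\lim_n C_n$ would produce, for large $n$, a $\varpi$-torsion element of the $V/\varpi^n$-flat module $C_n$ --- shows that $B_1 \widehat{\otimes}_B B_2 = \lim_n C_n$ has no $\varpi$-torsion. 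Therefore $A_1 \otimes^{w}_A A_2$, being a subalgebra, is also $\varpi$-torsion free, i.e.\ flat over $V$.

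Finally I would check the universal property. A morphism of enlargements $S = (C, C/\varpi) \to T_i$ is a (automatically continuous, for the $\varpi$-adic topology) $V$-algebra map $A_i \to C$, and giving $S \to T_1$ and $S \to T_2$ over $T$ amounts to a compatible pair of $A$-algebra maps $A_1 \to C$, $A_2 \to C$ into the weakly complete algebra $C$. Since $A_1 \otimes^{w}_A A_2$ is the weak completion of $A_1 \otimes_A A_2$ and weak completion is left adjoint to the inclusion of weakly complete algebras (cf.\ the universal property in Lemma~\ref{lemma:universal-property-rational-mw-algebra}), such a pair corresponds to a unique map $A_1 \otimes^{w}_A A_2 \to C$. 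Reducing modulo $\varpi$ and invoking the identification of the special fibre, the induced map $R_1 \otimes_R R_2 \to C/\varpi$ is exactly the one classifying $\mathrm{Spec}(C/\varpi) \to \mathrm{Spec}(R_1)\times_{\mathrm{Spec}(R)} \mathrm{Spec}(R_2)$, and compatibility over $X$ is automatic from the factorizations $\mathrm{Spec}(C/\varpi) \to \mathrm{Spec}(R_i) \to X$. Hence $(A_1 \otimes^{w}_A A_2, R_1 \otimes_R R_2)$ represents $T_1 \times_T T_2$, and the only point demanding real care is the flatness step, specifically the passage from flatness of the ordinary reductions $C_n$ to $\varpi$-torsion freeness of the completed, and then weakly completed, tensor product.
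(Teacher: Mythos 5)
Your proposal is correct and follows essentially the same route as the paper: everything rests on flatness of the completed tensor product $\widehat{A}_1\widehat{\otimes}_{\widehat{A}}\widehat{A}_2$, obtained by a local-criterion-of-flatness argument at the finite levels $V/\varpi^n$ (this is exactly Lemma~\ref{lemma:flatness-tensor-product}) together with the observation that $A_1\otimes^{w}_A A_2$ is a subring of the completed tensor product and hence inherits $\varpi$-torsion-freeness. The only divergence is minor: the paper first restricts all three enlargements to the common open $\mathrm{Spec}(R_1\otimes_R R_2)$ of the special fibers so that Lemma~\ref{lemma:flatness-tensor-product} applies verbatim (the reductions mod $\varpi$ become identities), whereas you feed the flatness of the open immersions directly into the local criterion to get $\widehat{A}_i$ flat over $\widehat{A}$; the identification of the special fibre and the verification of the universal property, which you spell out, are left implicit in the paper's proof.
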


\begin{proof}
Let \(\widehat{B}\) be the \(\varpi\)-adic completion of a weakly
complete, finitely generated algebra \(B\). Let \(S = R_1 \otimes_R R_2\). Then
\(T\), \(T_1\) and \(T_2\) naturally define enlargements with special fiber
\(\mathrm{Spec}(S)\), by restricting to the corresponding Zariski localization.
By Lemma~\ref{lemma:flatness-tensor-product}, the completed tensor product
\(\widehat{A}_1 \widehat{\otimes}_{\widehat{A}} \widehat{A}_2\) is flat over
\(V\). As the weakly completed tensor product is a subring of the completed
tensor product, the weakly completed tensor product is \(\varpi\)-torsion free,
i.e., flat, as well.
\end{proof}

\begin{definition}
\label{definition:site}
Let \(\text{Enl}(X/V)\) be the category whose objects are absolutely affine
enlargements \((A,R)\). By Corollary \ref{corollary:fiber-product-enlargement},
\(\text{Enl}(X/V)\) admits fiber products. We say a collection of morphisms of
enlargements \(\{T_i \to T\}_{i\in I}\) is a \emph{covering} if the special fibers
\(T_i \otimes_V k\) form an open covering of the special fiber of \(T\).
Endowing with this notion of covering \(\text{Enl}(X/V)\) becomes a category
with a pretopology. We use \((X/V)_{\text{MW}}\) to denote its sheaf topos, and
call it the \emph{Monsky--Washnitzer topos} of \(X/V\).
\end{definition}
\begin{instance}
\label{example:sheaves-on-mw}
We give a few examples of sheaves on the Monsky--Washnitzer topos of a
\(k\)-variety \(X\).

(i) The presheaf which assigns to each (absolutely affine)
enlargement \(T = (A \to R \leftarrow \mathcal{O}_X)\) the weakly complete,
finitely generated algebra \(A\) is a sheaf. It is denoted by
\(\mathcal{O}_{X/V}\).

(ii) The presheaf which assigns to \(T\) the dagger algebar
\(A[1/\varpi]\) is also a sheaf, which is denoted by
\(\mathcal{O}_{X/V}^{\text{an}}\). We define the \emph{Monsky--Washnitzer cohomology} of
\(X/K\) to be the sheaf cohomology of \(\mathcal{O}_{X/V}^{\text{an}}\):
\begin{equation*}
\mathrm{H}_{\mathrm{MW}}^{\bullet}(X/K) = \mathrm{H}^{\bullet}((X/V)_{\mathrm{MW}},\mathcal{O}_{X/V}^{\text{an}})
\end{equation*}
Note that since \((X/V)_{\text{MW}}\) is not noetherian, the Monsky--Washnitzer
cohomology is different from the sheaf cohomology of \(\mathcal{O}_{X/V}\)
tensored with \(K\). For smooth affine varieties over \(k\), we shall see below
that the present definition agrees with the classical one.

(iii) Let \(P\) be a widening. Then \(P\) defines a sheaf on the
Monsky--Washnitzer site which sends an absolutely affine enlargement \(T\) to the
set \(\mathrm{Hom}(T,P)\).
\end{instance}

The role of twisted coefficients in a Monsky--Washnitzer topos are played by the
so-called Monsky--Washnitzer isocrystals, as defined below.

\begin{definition}[Crystalline sheaves]
\label{definition:crystalline-sheaves}
A \emph{crystalline} sheaf \(\mathcal{F}\) of \(\mathcal{O}_{X/V}\)-modules (or
\(\mathcal{O}^{\text{an}}_{X/V}\)-modules) is an \(\mathcal{O}_{X/V}\)-module
(or an \(\mathcal{O}^{\text{an}}_{X/V}\)-module) in the topos
\((X/V)_{\textup{MW}}\) such that for any morphism \(g: T' \to T\) of
enlargements, the natural morphism
\begin{equation*}
g^* \mathcal{F}_{T} \to \mathcal{F}_{T'}
\end{equation*}
of Zariski sheaves is an isomorphism.

We say a sheaf \(\mathcal{F}\) of \(\mathcal{O}_{X/V}\)-modules
(resp.~\(\mathcal{O}^{\text{an}}_{X/V}\)-modules) is \emph{coherent}, if for each enlargement
\(T\), the Zariski sheaf \(\mathcal{F}_T\) is coherent
(resp.~isocoherent). Coherent crystalline \(\mathcal{O}_{X/V}\)-modules
are called \emph{Monsky--Washnitzer crystals}, coherent crystalline
\(\mathcal{O}^{\text{an}}_{X/V}\)-modules are called \emph{Monsky--Washnitzer
isocrystals}.
\end{definition}
\begin{instance}
\label{example:smooth-frame}
Let \(P\) be a weak formal scheme over \(V\) whose completion is smooth over
\(V\). Let \(X\) be a closed subvariety of \(P\otimes_V k\). Then
\((P,X,\mathrm{Id})\) is a widening.
\end{instance}

\begin{lemma}
\label{lemma:affine-frame-covering-terminal}
Let the notation be as in Example~\ref{example:smooth-frame}.
Let \((T,Z,z)\) be an absolutely affine enlargement,
then there is a morphism of widenings \(T \to P\).
\end{lemma}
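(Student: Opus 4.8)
The plan is to unwind the definition of a morphism of widenings, reduce the statement to an infinitesimal lifting problem, solve that problem after \(\varpi\)-adic completion using the smoothness of \(\widehat{P}\), and finally descend the formal solution to the overconvergent level by Artin approximation.

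First I would unwind the data. Since \((T,Z,z)\) is an \emph{enlargement}, the subscheme \(Z\) is the entire special fibre \(T\otimes_V k\), and since the structure morphism of the widening \(P\) from Example~\ref{example:smooth-frame} is \(\mathrm{Id}_X\), the square defining a morphism of widenings in Definition~\ref{definition:widening} forces \(u_0=z\). Thus a morphism of widenings \(T\to P\) is precisely a morphism of weak formal \(V\)-schemes \(u\colon T\to P\) whose reduction \(\bar u\colon Z=T\otimes_V k\to P\otimes_V k\) equals the composite \(Z\xrightarrow{z}X\hookrightarrow P\otimes_V k\). So it suffices to lift this last \(k\)-morphism \(\bar u_0\) to a morphism of weak formal schemes.

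Next I would produce a lift after \(\varpi\)-adic completion. Writing \(T=\mathrm{Spwf}(A)\) with \(A\) admissible, the completion is \(\widehat T=\mathrm{Spf}(\widehat A)\), and \(Z=\mathrm{Spec}(\widehat A/\varpi)\) sits inside it as the reduction. Each thickening \(\mathrm{Spec}(\widehat A/\varpi^{n})\hookrightarrow\mathrm{Spec}(\widehat A/\varpi^{n+1})\) is square-zero (for \(n\geq 1\)) with ideal \(\varpi^{n}\widehat A/\varpi^{n+1}\widehat A\cong R\), a quasi-coherent sheaf on \(Z\); since \(\widehat P\) is smooth over \(V\), the obstruction to extending a lift across such a thickening lives in an \(H^1\) of \(Z\) with coefficients in \(\bar u_0^{*}\mathcal T_{P\otimes_V k/k}\) twisted by that ideal. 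Because \(Z=\mathrm{Spec}(R)\) is \emph{affine}, this \(H^1\) vanishes, so lifts exist at every stage and assemble in the limit into a morphism of formal schemes \(\hat u\colon\widehat T\to\widehat P\) reducing to \(\bar u_0\).

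The hard part will be descending \(\hat u\) to a morphism of weak formal schemes, since this step is genuinely non-formal. Working locally on \(P\), where \(P=\mathrm{Spwf}(B)\) with \(\widehat B\) smooth over \(V\), the lift is a continuous \(V\)-homomorphism \(\widehat B\to\widehat A\) and one must see it carries \(B\) into \(A\). By weak completeness of \(A\) (Situation~\ref{situation:weakly-complete}) it is enough to arrange that the images of a finite set of weak generators lie in \(A\); so, presenting \(B=W_m/(f_1,\ldots,f_s)\), I would regard the formal lift as a solution \(\rho\in\widehat A^{\,m}\) of the system \(f_j(\rho)=0\) with the prescribed reduction modulo \(\varpi\), and then invoke the Artin approximation theorem for weakly complete, finitely generated algebras~\cite[(2.4.2)]{van-der-put:monsky-washnitzer-cohomology} to replace \(\rho\) by a genuine solution \(r\in A^{\,m}\) with \(f_j(r)=0\) and \(r\equiv\rho\pmod{\varpi}\). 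The assignment \(t_i\mapsto r_i\) then defines \(B\to A\) — here one uses that \(W_m\) represents \(A\mapsto A^{m}\) on weakly complete algebras, the integral form of Lemma~\ref{lemma:universal-property-rational-mw-algebra} — and this homomorphism lifts \(\bar u_0\) as required. For non-affine \(P\) one finally checks that these local descents are compatible: a descent of \(\hat u\) is unique if it exists, and the discrepancies between the local lifts are again controlled by the cohomology of a quasi-coherent sheaf on the affine scheme \(Z\), hence can be reconciled. I expect the Artin approximation step to be the crux, since the formal lift need not be overconvergent and there is no purely formal reason for it to descend.
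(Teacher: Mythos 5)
Your proposal follows essentially the same route as the paper: pass to the $\varpi$-adic completion $\widehat T$, use formal smoothness of $\widehat P$ to lift $z$ to a morphism $\widehat u\colon \widehat T\to\widehat P$, and then descend to the weak formal (overconvergent) level via the Artin approximation theorem for weakly complete, finitely generated algebras of~\cite[(2.4.2)]{van-der-put:monsky-washnitzer-cohomology}, arranging agreement on the special fibre. The extra detail you supply (the obstruction-theoretic reason the formal lift exists over the affine $Z$, and the explicit presentation of the descent as approximating a solution of $f_j(\rho)=0$) is consistent with, and merely fleshes out, the paper's argument.
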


\begin{proof}
Let \(\widehat{T}\) be the completion of \(T\). Then \(Z\) is a scheme of
definition of \(\widehat{T}\). Since \(\widehat{P}\) is formally smooth, there
exists a lift \(\widehat{u}: \widehat{T} \to \widehat{P}\) that reduces to
\(z: Z \to P\). By the Artin approximation theorem of weakly complete, finitely
generated algebras
\cite[(2.4.2)]{van-der-put:monsky-washnitzer-cohomology}
we get a morphism \(u: T \to P\) that approximates
\(\widehat{u}\). In particular we can arrange \(u\) to agree with
\(\widehat{u}\) on the special fiber.
\end{proof}

The following is a translation of the above result to abstract nonsense.

\begin{corollary}
\label{corollary:smooth-frame-cover-terminal-object}
Let the notation be as above. Let \(h_P\) be the sheaf represented by the widening
\(P\) in the Monsky--Washnitzer topos.
Then object \(h_P\) is a covering to the terminal object.
\end{corollary}

\begin{situation}
\label{situation:many-pieces}
For a \(k\)-variety \(X\) that is not a closed subscheme of a weak formal scheme
whose completion is smooth, we can use a variant of the above construction to
get a covering of the terminal object of the Monsky--Washnitzer topos, as
follows. Let \(\{U_i\}\) be a covering of \(X\) by affine open subschemes. Let
\(P_i\) be a weak affine space containing \(U_i\) as a closed subscheme of its
special fiber. Then \((P_i, U_i)\) is a widening for \(X/V\). Now view each
\((P_i,U_i)\) as a sheaf on the Monsky--Washnitzer site, then the collection of
sheaves \(\{(P_i,U_i)\}\) is a covering of the terminal object of the
Monksy-Washnitzer topos. The proof is precisely as above (but easier, as we can
just use the universal property of the integral Monsky--Washnitzer algebra, and
need not to resort to the Artin approximation).
\end{situation}
We close this section by proving the functoriality of Monsky--Washnitzer topoi.
Note that if \(f: X \to X'\) is a morphism of \(k\)-varieties. Then \(f\) in
general does not pull back enlargements. So we don't always have continuous
functors on the level of categories with pretopologies. Nevertheless \(f\) does
induce a morphism of topoi
\(f_{\textup{MW}}:(X/V)_{\textup{MW}}\to(X'/V)_{\textup{MW}}\) since we
have a naturally defined ``cocontinuous functor''. Let's review this notion first.

\begin{definition}
\label{definition-cocontinuous}
Let \(\mathcal{C}\) and \(\mathcal{D}\) be categories with pretopologies. Let
\(u : \mathcal{C} \to \mathcal{D}\) be a functor. The functor \(u\) is called
\emph{cocontinuous} if for every \(U \in \mathrm{Ob}(\mathcal{C})\) and every covering
\(\{V_j \to u(U)\}_{j \in J}\) of \(\mathcal{D}\) there exists a covering
\(\{U_i \to U\}_{i\in I}\) of \(\mathcal{C}\) such that the family of maps
\(\{u(U_i) \to u(U)\}_{i \in I}\) refines the covering
\(\{V_j \to u(U)\}_{j \in J}\).
\end{definition}

The following lemma tells us how the notion of cocontinuous functors are related
to morphisms of sheaf topoi. Recall that if \(u: \mathcal{C} \to \mathcal{D}\)
is a functor of categories, then
\begin{equation*}
u^p :
\textit{PSh}(\mathcal{D})
\longrightarrow
\textit{PSh}(\mathcal{C})
\end{equation*}
is the functor that associates to \(\mathcal{G}\) on \(\mathcal{D}\) the
presheaf \(u^p\mathcal{G} = \mathcal{G} \circ u\). This functor has both left
and right adjoints which we denote by \(u_{p}\) and \({}_{p}u\) respectively.
For their definitions, see \cite{stacks-project}, \href{http://stacks.math.columbia.edu/tag/00XF}{Tag \texttt{00XF}}.

\begin{lemma}
\label{lemma-cocontinuous-morphism-topoi}
Let \(\mathcal{C}\) and \(\mathcal{D}\) be categories with pretopologies. Let
\(u : \mathcal{C} \to \mathcal{D}\) be cocontinuous. Then the \(g_\ast = {}_{p}u\)
takes sheaves to sheaves. The pair \(g_\ast\) and \(g^{-1} = (u^p)^{\#}\) (\(\#\)
stands for the sheafification) define a morphism of topoi \(g\) from
\(\textit{Shv}(\mathcal{C})\) to \(\text{Shv}(\mathcal{D})\).
\end{lemma}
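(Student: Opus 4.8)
The plan is to establish the two assertions separately: first that $g_\ast = {}_pu$ carries sheaves to sheaves, and then that the pair $(g^{-1}, g_\ast)$ is a morphism of topoi. The second assertion is essentially formal once the first is available, so the genuine content sits in the sheaf-preservation statement, and that is precisely where the hypothesis that $u$ is cocontinuous enters.

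For sheaf-preservation I would work through the presheaf adjunction $(u^p, {}_pu)$ together with the Yoneda reformulation of the sheaf axiom. For a covering $\mathcal{V} = \{V_j \to V\}$ one forms the covering presheaf $h_{\mathcal{V}}$, the coequalizer of the two maps $\coprod_{j,k} h_{V_j \times_V V_k} \rightarrow \coprod_j h_{V_j}$, so that a presheaf $\mathcal{H}$ on $\mathcal{D}$ is a sheaf exactly when $\mathrm{Hom}(h_V, \mathcal{H}) \to \mathrm{Hom}(h_{\mathcal{V}}, \mathcal{H})$ is bijective for every covering. Taking $\mathcal{H} = {}_pu\mathcal{F}$ and invoking the adjunction $\mathrm{Hom}(-, {}_pu\mathcal{F}) = \mathrm{Hom}(u^p(-), \mathcal{F})$, the sheaf condition for ${}_pu\mathcal{F}$ turns into the statement that $\mathrm{Hom}(u^p h_V, \mathcal{F}) \to \mathrm{Hom}(u^p h_{\mathcal{V}}, \mathcal{F})$ is bijective. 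Since $\mathcal{F}$ is a sheaf, $\mathrm{Hom}(-, \mathcal{F})$ factors through sheafification, so it suffices to show that $u^p$ sends the map $h_{\mathcal{V}} \to h_V$ — which induces an isomorphism on associated sheaves because $\mathcal{V}$ is a covering — to a map of presheaves on $\mathcal{C}$ that again induces an isomorphism on associated sheaves.

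This last reduction is the one nonformal point, and I expect it to be the main obstacle. To see local surjectivity of $u^p(h_{\mathcal{V}} \to h_V)$, take $U \in \mathcal{C}$ and a section of $u^p h_V$ over $U$, that is, a morphism $u(U) \to V$; pulling the covering $\{V_j \to V\}$ back along it gives a covering $\{u(U) \times_V V_j \to u(U)\}$ of $u(U)$, and cocontinuity then supplies a covering $\{U_i \to U\}$ in $\mathcal{C}$ such that each $u(U_i) \to u(U)$ factors through some $u(U) \times_V V_j$; this factorization is exactly a local lift of the section to $u^p h_{\mathcal{V}}$. Local injectivity is checked in the same manner. When $\mathcal{D}$ lacks fiber products one replaces $h_{\mathcal{V}}$ by the covering sieve and the argument is unchanged; in the cases relevant to us such fiber products are in any event available by Corollary~\ref{corollary:fiber-product-enlargement}.

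Granting that ${}_pu$ preserves sheaves, the rest is formal. For adjointness, chain the universal property of sheafification with the presheaf adjunction: for $\mathcal{G} \in \textit{Shv}(\mathcal{D})$ and $\mathcal{F} \in \textit{Shv}(\mathcal{C})$ one has $\mathrm{Hom}((u^p\mathcal{G})^{\#}, \mathcal{F}) = \mathrm{Hom}(u^p\mathcal{G}, \mathcal{F}) = \mathrm{Hom}(\mathcal{G}, {}_pu\mathcal{F})$, the first equality holding because $\mathcal{F}$ is a sheaf, and both remaining Homs coinciding with sheaf Homs because $\mathcal{G}$ and ${}_pu\mathcal{F}$ are sheaves; hence $g^{-1} \dashv g_\ast$. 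For left exactness of $g^{-1} = (u^p(-))^{\#}$, note that $u^p$ is precomposition with $u$ and so commutes with all limits (computed objectwise on presheaves), while sheafification is exact; the composite is therefore left exact. A left-exact left adjoint $g^{-1}$ with right adjoint $g_\ast$ is by definition a morphism of topoi, which completes the argument.
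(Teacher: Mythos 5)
Your argument is correct, and it is essentially the proof the paper is implicitly relying on: the paper gives no argument of its own here but simply cites the Stacks Project (Tag 00XO), and your reduction of the sheaf condition for \({}_{p}u\mathcal{F}\) via the adjunction to the statement that \(u^p\) sends the covering presheaf inclusion \(h_{\mathcal{V}} \to h_V\) to a local isomorphism --- with cocontinuity supplying exactly the refinement needed for local surjectivity --- is the standard argument found there. The only point worth noting is that local injectivity is in fact immediate (the map \(h_{\mathcal{V}} \to h_V\) is already a monomorphism of presheaves, and \(u^p\) is evaluation along \(u\), so preserves this), so "checked in the same manner" slightly oversells the work required; everything else, including the formal adjunction and left-exactness of \((u^p(-))^{\#}\), is correct.
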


\begin{proof}
See \cite{stacks-project}, \href{http://stacks.math.columbia.edu/tag/00XO}{Tag \texttt{00XO}}.
\end{proof}

\begin{lemma}
\label{lemma:morphism-induces-morphism-mw-topoi}
Let \(f: X \to X'\) be a morphism of \(k\)-varieties. Then it induces a
canonical morphism of the Monsky--Washnitzer topoi:
\(f_{\textup{MW}}:(X/V)_{\textup{MW}}\to(X'/V)_{\textup{MW}}\).
\end{lemma}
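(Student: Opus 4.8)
The plan is to produce the morphism of topoi from a cocontinuous functor, using Lemma~\ref{lemma-cocontinuous-morphism-topoi}. The key observation is that although $f: X \to X'$ does not pull back enlargements in general (since the special fiber map $Z \to X$ cannot be directly composed with $f$ in a way that lands in $X'$ while keeping the widening structure), it does push them forward in the following sense. Given an absolutely affine enlargement $T = (A \to R \leftarrow \mathcal{O}_X)$ of $X/V$, the structure morphism $\operatorname{Spec}(R) \to X$ composed with $f: X \to X'$ yields a morphism $\operatorname{Spec}(R) \to X'$, and the same data $(A \to R)$ now constitutes an absolutely affine enlargement of $X'/V$. First I would define the functor
\[
u : \operatorname{Enl}(X/V) \longrightarrow \operatorname{Enl}(X'/V), \qquad (A \to R \leftarrow \mathcal{O}_X) \longmapsto (A \to R \leftarrow \mathcal{O}_{X'}),
\]
where the new augmentation is the composite $\operatorname{Spec}(R) \to X \xrightarrow{f} X'$, and verify that $u$ is well-defined on morphisms (a morphism of enlargements over $X$ is manifestly a morphism of enlargements over $X'$, since the square involving $X$ commutes and we simply postcompose the right-hand column with $f$).

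Second, I would check that $u$ is \emph{cocontinuous} in the sense of Definition~\ref{definition-cocontinuous}. This is the heart of the argument, but it is not hard given how coverings are defined: a covering in either site is a family $\{T_i \to T\}$ whose special fibers $\{T_i \otimes_V k\}$ form a Zariski open covering of $T \otimes_V k$ (see Definition~\ref{definition:site}). Crucially, the functor $u$ does not change the underlying weakly complete algebra $A$ nor the quotient $R$; it only alters the augmentation to $X'$. Hence $u(T)$ and $T$ have literally the same special fiber $\operatorname{Spec}(R \otimes_V k)$. Given a covering $\{S_j \to u(T)\}$ in $\operatorname{Enl}(X'/V)$, each $S_j = (A_j \to R_j)$ with $\{\operatorname{Spec}(R_j \otimes_V k)\}$ an open cover of $\operatorname{Spec}(R \otimes_V k)$. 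I would pull these opens back to enlargements over $X$: the open subscheme $\operatorname{Spec}(R_j) \hookrightarrow \operatorname{Spec}(R)$ (together with the restricted $A \to R \to R_j$, or rather the Zariski localization of $A$) defines an enlargement $T_j$ of $X/V$, since the augmentation $\operatorname{Spec}(R_j) \to \operatorname{Spec}(R) \to X$ is available on the $X$-side. Then $\{T_j \to T\}$ is a covering of $T$ in $\operatorname{Enl}(X/V)$, and $\{u(T_j) \to u(T)\}$ refines (indeed matches up with) $\{S_j \to u(T)\}$, because $u$ preserves the relevant special fibers.

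Third, once cocontinuity is established, Lemma~\ref{lemma-cocontinuous-morphism-topoi} directly furnishes a morphism of topoi $g : \textit{Shv}(\operatorname{Enl}(X/V)) \to \textit{Shv}(\operatorname{Enl}(X'/V))$, which we name $f_{\textup{MW}}$. Functoriality (that $(\mathrm{id})_{\textup{MW}} = \mathrm{id}$ and $(g \circ f)_{\textup{MW}} = g_{\textup{MW}} \circ f_{\textup{MW}}$) follows because the construction of $u$ is evidently functorial in $f$ on the nose, and the passage from cocontinuous functors to morphisms of topoi is itself functorial. I expect the main obstacle to be the careful bookkeeping in the cocontinuity check: one must confirm that pulling back an open cover of the special fiber to honest enlargements over $X$ respects the admissibility (flatness) and finite-generation hypotheses, and that the resulting $u$-images genuinely refine the given $X'$-covering. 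This is where one uses that the Zariski localization of a weakly complete finitely generated algebra is again of the same type, and that the special fibers are unchanged by $u$, so the covering condition transports without loss.
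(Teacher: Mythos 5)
Your overall strategy is exactly the paper's: define $u:\mathrm{Enl}(X/V)\to\mathrm{Enl}(X'/V)$ by keeping the data $(A\to R)$ and postcomposing the augmentation with $f$, verify cocontinuity, and invoke Lemma~\ref{lemma-cocontinuous-morphism-topoi}. The one place you diverge is the cocontinuity check, and there your argument has a gap. Given a covering $\{S_j\to u(T)\}$ in $\mathrm{Enl}(X'/V)$, you manufacture refining objects $T_j$ by Zariski-localizing $A$ over the opens $\mathrm{Spec}(R_j\otimes_V k)$ and assert that $\{u(T_j)\to u(T)\}$ ``refines (indeed matches up with)'' $\{S_j\to u(T)\}$. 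Refinement, per Definition~\ref{definition-cocontinuous}, requires a factorization of each $u(T_j)\to u(T)$ through some $S_j$, and you never produce one: a covering in the sense of Definition~\ref{definition:site} only constrains the special fibers, so $S_j=(A_j\to R_j)$ need not be a Zariski localization of $A$, and there is no canonical homomorphism from $A_j$ to the dagger localization of $A$ realizing such a factorization. The paper sidesteps this entirely with a simpler observation: the augmentation $v_j$ of $S_j$ factors as $V_j\hookrightarrow Z\xrightarrow{z}X\xrightarrow{f}X'$, so each $S_j$ is already, with the same underlying weak formal scheme and special fiber, an enlargement of $X/V$ equipped with a morphism to $T$; the family $\{S_j\to T\}$ is then a covering in $\mathrm{Enl}(X/V)$ which $u$ sends back to the given covering on the nose, so no refinement and no new objects are needed. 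Replacing your localized $T_j$ by the $S_j$ themselves closes the gap and recovers the paper's proof verbatim; the remainder of your write-up (well-definedness on morphisms, functoriality in $f$) is fine.
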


\begin{proof}
Let \(S = (\mathfrak{Z},Z,z)\) be an enlargement for \(X/V\). Then
\((\mathfrak{Z},Z,f\circ z)\) is an enlargement for \(X'/V\). This defines a
functor \(u: \mathrm{Enl}(X/V) \to \mathrm{Enl}(X'/V)\). We shall verify that
\(u\) is cocontinuous in the sense of
Definition~\ref{definition-cocontinuous}. Let
\(\{T_i = (\mathfrak{V}_i, V_i, v_i)\}\) be a collection of enlargements of
\(X'\) that covers \(u(S)\). Since we are using Zariski topology, this means
that \(\{\mathfrak{V}_i\}\) is a Zariski open covering for \(\mathfrak{Z}\) and
\(\{V_i\}\) is a Zariski open covering of \(Z\). The morphism \(v_i\) thus
factors as
\begin{equation*}
V_i \hookrightarrow Z \xrightarrow{z} X \xrightarrow{f} X'.
\end{equation*}
In other words, \(T_i\) are naturally enlargements for \(X/V\).
Therefore the defining property of a cocontinuous functor is satisfied for
\(u\), even without refining the covering. This completes the proof.
\end{proof}

\section{Universal enlargements}
\label{sec:org525a0c6}

In this section we define the crucial tool in the of the study of the
Monsky--Washnitzer topos, the so-called \emph{universal enlargement} construction.
In the context of convergent topos this is due to Ogus. In the rigid-analytic
context this is Berthelot's tubular neighborhood construction.
\begin{situation}
\textbf{Admissible weak formal blowup.} Let \(A\) be an admissible (i.e., flat) weakly
complete, finitely generated algebra. Let \(X = \mathrm{Spwf}(A)\). Let \(I\) be
an ideal (necessarily coherent) of \(A\). Denote \(A_n = A/\varpi^{n+1}A\), and
\(I_n = \frac{I}{\varpi^{n+1}A \cap I}\). Since \(A\) has \(\varpi\)-adic
topology, \(I\) is \emph{open} if and only if it contains some power of \(\varpi\).
Assume that \(I\) is open, then for each \(n\), define
\begin{equation*}
X_{I_{n}} = \mathrm{Bl}_{I_n}(\mathrm{Spec}(A_n)) = \text{Proj}_X\left({\textstyle \bigoplus_{j=0}^{\infty} I_n^{j}}\right).
\end{equation*}
There is a natural map \(X_{I_n} \to \mathrm{Spec}(A_n)\), and by passing to
limits, a morphism of formal schemes
\begin{equation*}
\pi: \widehat{X}_{I} \to \widehat{X}
\end{equation*}
where \(\widehat{X}_{I} = \text{colim}\;X_{I_n}\) is called the \emph{admissible formal blowup}
of \(\widehat{X} = \mathrm{Spf}(\widehat{A})\) along \(I\). If instead of
performing completion we perform the weak completion, we get a weak formal
scheme \(X_{I}\).

Now let \(X\) be an admissible weak formal scheme with a covering of admissible
affine weak formal schemes \(\{U_{i} = \mathrm{Spwf}(A_i)\}\). Let \(\mathcal{A}\) be
an open coherent sheaf of ideals on \(X\). Then \(\mathcal{A}|_{U_i}\) is an
open coherent \(\mathfrak{a}_i\) of \(A_i\). Then we can define the admissible
weak formal blowups \(U_{i,\mathfrak{a}_i}\). It can be shown admissible formal
blowups satisfy a certain universal property similar to that of the usual
blowup, so we can glue the formal schemes \(U_{i,\mathfrak{a}_i}\) together, and
obtain another weak formal scheme \(X_{\mathcal{A}}\). This is called the \emph{admissible weak
formal blowup} of \(X\) along \(\mathcal{A}\).
\end{situation}

\begin{theorem}
\label{theorem:flat-admissible-formal-blow-up}
Let \(X\) be an admissible weak formal \(\mathfrak{o}\)-scheme. Let \(\mathcal{A}\)
be a sheaf of coherent open ideal. Then \(X_{\mathcal{A}}\) is an admissible
weak formal scheme over \(V\).
\end{theorem}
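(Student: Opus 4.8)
The plan is to reduce the statement to the local, affine case, since being an admissible weak formal scheme is a local property. So we may assume $X = \mathrm{Spwf}(A)$ for an admissible (flat) weakly complete, finitely generated $V$-algebra $A$, and that $\mathcal{A}$ corresponds to an open coherent ideal $I \subset A$. The weak formal blowup $X_I$ is glued from the weak completions of the charts of the ordinary blowup $\mathrm{Bl}_I(\mathrm{Spec}(A))$, so it suffices to check that each such chart, after weak completion, is flat over $V$, i.e.\ has no $\varpi$-torsion. Thus the real content is to show that the standard affine charts of an admissible weak formal blowup are $\varpi$-torsion free.

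First I would recall the explicit description of the charts of a blowup. If $I = (g_0, g_1, \ldots, g_r)$, then $\mathrm{Bl}_I(\mathrm{Spec} A)$ is covered by the affine charts $\mathrm{Spec}(A[I/g_j])$, where $A[I/g_j]$ is the subring of the localization $A[1/g_j]$ generated over $A$ by the fractions $g_i/g_j$. The key algebraic fact is that $A[I/g_j]$ is the quotient of a polynomial ring $A[x_1, \ldots, x_r]$ by the ideal generated by the relations $g_j x_i - g_i$ (together with the $I$-torsion, but over the relevant piece one arranges that $g_j$ becomes a nonzerodivisor). The crucial point is that $A[I/g_j]$, by construction, is a subring of $A[1/g_j]$; since $A$ is flat over $V$, the localization $A[1/g_j]$ is flat over $V$, hence $\varpi$-torsion free, and therefore so is the subring $A[I/g_j]$.

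The main obstacle, and where the argument genuinely uses the hypotheses, is that the weak formal blowup $X_I$ is built not from $A[I/g_j]$ itself, but from its \emph{weak completion}, and weak completion need not preserve $\varpi$-torsion-freeness for an arbitrary $V$-algebra. So I would argue as follows: the weak completion $\bigl(A[I/g_j]\bigr)^{\dagger}$ sits inside the $\varpi$-adic completion $\widehat{A[I/g_j]}$, so it suffices to show the latter is flat over $V$. For this I would invoke that admissible formal blowups (in the sense of Bosch--Lütkebohmert, or as in \cite{fujiwara-kata:foundation-rigid-geometry}) of flat formal schemes are again flat; concretely, the $\varpi$-adic completion of $A[I/g_j]$ is the corresponding chart of the admissible formal blowup $\widehat{X}_I$ of the flat formal scheme $\widehat{X}$, and flatness of admissible formal blowups is exactly the classical theorem of Raynaud (see \cite[Theorem~8.2.\ldots]{fujiwara-kata:foundation-rigid-geometry} or Bosch). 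Since $\bigl(A[I/g_j]\bigr)^{\dagger} \hookrightarrow \widehat{A[I/g_j]}$ and the target has no $\varpi$-torsion, neither does the source.

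Finally I would address gluing: the local charts $\bigl(U_{i,\mathfrak{a}_i}\bigr)$ have each been shown to be admissible, and since admissibility (flatness over $V$) is checked on an affine cover, the glued weak formal scheme $X_{\mathcal{A}}$ is admissible. \textbf{I expect the subtle step to be the comparison between weak completion and ordinary $\varpi$-adic completion of the blowup charts}: one must know that the weak completion really does embed into the completion (this is the $\varpi$-adic separatedness built into the definition of weak completeness in~\ref{situation:weakly-complete}) and that the completion of a blowup chart of $A$ agrees with the corresponding chart of the formal blowup of $\widehat{A}$ — an observation of the same flavor as Lemma~\ref{lemma:completion-commutes-with-inverting}, reducing the claim modulo $\varpi^N$ to the ordinary blowup of $\mathrm{Spec}(A/\varpi^N)$.
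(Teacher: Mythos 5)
Your proposal is correct and follows essentially the same route as the paper: both arguments come down to invoking the classical flatness of admissible formal blowups of flat formal schemes (the paper cites \cite[(8.2/8)]{bosch:formal-rigid}, you cite Raynaud via Fujiwara--Kato) and then observing that the structure sheaf of the weak formal blowup embeds into that of its $\varpi$-adic completion, so $\varpi$-torsion-freeness is inherited by the subring. Your preliminary remarks about the charts $A[I/g_j]$ being subrings of $A[1/g_j]$ are not actually needed once you invoke the classical theorem for the completion, but they do no harm.
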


\begin{proof}
By \cite[(8.2/8)]{bosch:formal-rigid}, the completion of \(X\) is
\(\varpi\)-torsion free over \(V\). Therefore \(\mathcal{O}_{X}\), being a
subalgebra of \(\mathcal{O}_{\widehat{X}}\), is also \(\varpi\)-torsion free.
Since \(V\) is a valuation ring, being \(\varpi\)-torsion free is the same as
being flat.
\end{proof}

\begin{definition}
\label{definition:tube-of-closed-radii}
Let \(X\) be an admissible weak formal scheme. Let \(\mathcal{I}\) be a coherent open
ideal of \(\mathcal{O}_X\). In the sequel, let \(T_{m,\mathcal{I}}(X)\) be the
open formal subscheme of the admissible blowup
\(X_{\varpi+\mathcal{I}^{m}}\)
(which is the weak completion of the scheme
\(\text{Proj}_{X}\big(\bigoplus_{j=0}^{\infty} \{(\varpi)+\mathcal{I}^{m}\}^{j}\big)\))
defined by the nonvanishing locus of the
Cartier divisor \(\varpi\). Thus, the weak formal scheme \(T_{m,\mathcal{I}}(X)\) is
admissible, thanks to Theorem~\ref{theorem:flat-admissible-formal-blow-up}, and there exists a natural morphism of
weak formal schemes over \(\mathfrak{o}\):
\begin{equation*}
\tau_{m} : T_{m,\mathcal{I}}(X) \to X
\end{equation*}
such that the ideal
\((\varpi + \mathcal{I}^{m})\cdot \mathcal{O}_{T_{m,\mathcal{I}}(X)}\)
generated by the inverse image of \(\varpi\) and elements in \(\mathcal{I}^{m}\)
is the principal ideal \(\varpi \cdot \mathcal{O}_{T_{m,\mathcal{I}}(X)}\).
\end{definition}

\begin{instance}
\label{example:tube-of-a-point}
Let \(A = V\langle x \rangle^{\dagger}\).
Then \(A\) is an admissible weakly complete, finitely generated \(V\)-algebra.
Let \(X = \mathrm{Spwf}(A)\), and \(\mathcal{I} = (x)\). Then
\begin{equation*}
T_{n,\mathcal{I}}(X) = \mathrm{Spwf}\left(V\langle x,s \rangle^{\dagger}/(x^{n} - \varpi s) \right).
\end{equation*}
The generic fiber
\(T_{n,\mathcal{I}}(X)_{K}\) of \(T_{n,\mathcal{I}}(X)\) is the closed
overconvergent disk \(\mathrm{Sp}(K\langle x \rangle^{\dagger})\) in \(K\) of
radius \(|\varpi|^{1/n}\). When \(n \to \infty\), the
union of these closed (overconvergent) disks is the open disk of radius 1.
\end{instance}

\begin{lemma}
\label{lemma:tube-ind-system}
Let the notation be as in Definition~\ref{definition:tube-of-closed-radii}.
Suppose \(n_1 > n_2\) are non-negative integers.
Then there exists a unique commutative diagram
\begin{equation*}
\xymatrix{
T_{n_2,\mathcal{I}}(X) \ar[rd]_{\tau_{n_2}} \ar[rr]^{\tau_{n_1,n_2}} & & T_{n_1,\mathcal{I}}(X) \ar[ld]^{\tau_{n_1}} \\
& X &
}.
\end{equation*}
Thus \(\{T_{n,\mathcal{I}}(X)\}\) is naturally an ind-weak formal scheme.
\end{lemma}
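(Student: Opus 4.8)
The plan is to deduce the whole statement from a single universal property of the tube $T_{m,\mathcal{I}}(X)$ over $X$, which I would isolate first. Concretely, I claim that among admissible weak formal schemes $g\colon Y\to X$, the morphism $\tau_m\colon T_{m,\mathcal{I}}(X)\to X$ is terminal for the property that the pulled-back ideal satisfies $\mathcal{I}^m\mathcal{O}_Y\subseteq\varpi\mathcal{O}_Y$; equivalently, that $(\varpi+\mathcal{I}^m)\mathcal{O}_Y=\varpi\mathcal{O}_Y$. That is, for every admissible weak formal $X$-scheme $Y$ with $\mathcal{I}^m\mathcal{O}_Y\subseteq\varpi\mathcal{O}_Y$ there is a unique $X$-morphism $Y\to T_{m,\mathcal{I}}(X)$.

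To establish this I would run the standard blow-up argument inside Meredith's weak formal category. By the universal property of the admissible weak formal blow-up $X_{\varpi+\mathcal{I}^m}\to X$ (recorded in the situation preceding Theorem~\ref{theorem:flat-admissible-formal-blow-up}), any admissible $Y\to X$ for which $(\varpi+\mathcal{I}^m)\mathcal{O}_Y$ is invertible factors uniquely through $X_{\varpi+\mathcal{I}^m}$. If in addition $\mathcal{I}^m\mathcal{O}_Y\subseteq\varpi\mathcal{O}_Y$, then $(\varpi+\mathcal{I}^m)\mathcal{O}_Y=\varpi\mathcal{O}_Y$; since $Y$ is admissible, $\varpi$ is a non-zero-divisor on $\mathcal{O}_Y$, so this principal ideal is invertible and the factorization exists. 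Because $\varpi$ generates the pulled-back invertible ideal, the image of $Y$ lands in the open chart where $\varpi$ is a generator, which is exactly $T_{m,\mathcal{I}}(X)$ by Definition~\ref{definition:tube-of-closed-radii}. Conversely, $\tau_m$ itself has $\mathcal{I}^m\mathcal{O}_{T_{m,\mathcal{I}}(X)}\subseteq\varpi\mathcal{O}_{T_{m,\mathcal{I}}(X)}$ by that same definition, and $T_{m,\mathcal{I}}(X)$ is admissible by Theorem~\ref{theorem:flat-admissible-formal-blow-up}, so it is a legitimate test object; this would complete the universal property.

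Granting this, the lemma is almost formal. I would take $Y=T_{n_2,\mathcal{I}}(X)$, which is admissible and carries $\mathcal{I}^{n_2}\mathcal{O}_Y\subseteq\varpi\mathcal{O}_Y$. Since $n_1>n_2$ one has $\mathcal{I}^{n_1}=\mathcal{I}^{n_2}\cdot\mathcal{I}^{n_1-n_2}\subseteq\mathcal{I}^{n_2}$, hence $\mathcal{I}^{n_1}\mathcal{O}_Y\subseteq\varpi\mathcal{O}_Y$ as well. The universal property of $T_{n_1,\mathcal{I}}(X)$ then produces a unique $X$-morphism $\tau_{n_1,n_2}\colon T_{n_2,\mathcal{I}}(X)\to T_{n_1,\mathcal{I}}(X)$, which is the asserted commutative triangle; the uniqueness clause of the lemma is precisely the uniqueness in the universal property. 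For the ind-structure, given $n_1>n_2>n_3$ both $\tau_{n_1,n_2}\circ\tau_{n_2,n_3}$ and $\tau_{n_1,n_3}$ are $X$-morphisms $T_{n_3,\mathcal{I}}(X)\to T_{n_1,\mathcal{I}}(X)$, so they coincide by uniqueness; this cocycle compatibility is exactly what is needed to view $\{T_{n,\mathcal{I}}(X)\}$ as an ind-weak formal scheme.

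I expect the only genuine work to lie in the weak formal blow-up universal property: one must check that the admissible blow-up in Meredith's category behaves like its formal counterpart, in particular that factorizations of admissible $X$-schemes through it exist and are unique and that passing to the $\varpi$-chart is harmless. Everything after that—the containment $\mathcal{I}^{n_1}\subseteq\mathcal{I}^{n_2}$ and the transitivity of the transition maps—is immediate from uniqueness, so the hard part is really front-loaded into setting up the universal property cleanly.
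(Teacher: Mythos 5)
Your proposal is correct and follows essentially the same route as the paper: both arguments come down to the universal property of the admissible weak formal blowup along $(\varpi)+\mathcal{I}^{n_1}$ together with the observation that $\mathcal{I}^{n_1}\mathcal{O}_{T_{n_2,\mathcal{I}}(X)}\subseteq\varpi\mathcal{O}_{T_{n_2,\mathcal{I}}(X)}$ since $n_1>n_2$ (the paper verifies this by the explicit computation $f_1\cdots f_{n_1}=(f_1\cdots f_{n_2}/\varpi)\cdot f_{n_2+1}\cdots f_{n_1}\cdot\varpi$, which is exactly your containment). You merely package the blowup argument as a stated universal property of $T_{m,\mathcal{I}}(X)$ before applying it, and you correctly identify that the only real work is checking the blowup universal property in Meredith's category, which the paper also takes as given.
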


\begin{proof}
The arrows exist even before we pass to the weak formal completion, as shown by
the following.
By the universal property of blowup, it suffices to prove the ideal in
\(\mathcal{O}_{T_{n_2,\mathcal{I}}(X)}\) generated by
\(\tau_{n_2}^{-1} \left((\varpi) + I^{n_1}\right)\) is the principal ideal \((\varpi)\).
It is clear that \(\varpi\) is in the ideal generated by
\(\left((p) + I^{n_1}\right)\). Let \(f_{1} \cdot f_{n_1}\) be local sections opf
\(\mathcal{I}^{n_1}\). On (the uncompleted version of)
\(T_{n_2,\mathcal{I}}(X)\), the open piece of
a certain \(\text{Proj}\) construction obtained by inverting a degree 1 section
\(\varpi\) of \(\mathcal{O}(1)\), the element
\begin{equation*}
g = \frac{f_{i_1}\cdots f_{i_{n_2}}}{\varpi}
\end{equation*}
is a well-defined local \emph{function} on \(T_{n_2,\mathcal{I}}(X)\).
Since \(n_1 > n_2\), we can write
\begin{equation*}
f_{1} \cdots f_{n_1} = g \cdot f_{n_2+1} \cdots f_{n_1} \cdot \varpi
\end{equation*}
on an open formal subscheme of the un-completed version of
\(T_{n_2,\mathcal{I}}(X)\). Clearly the
expression falls in the ideal generated by \(\varpi\). We win.
\end{proof}

\begin{lemma}
\label{lemma:rigid-tube-inclusion-are-weierstrass-domains}
Let the notation be as in Definition~\ref{definition:tube-of-closed-radii}.
Assume that \(X\) is an affine weak formal scheme.
Suppose \(n_1 > n_2\) are non-negative integers.
Then the morphism on generic fibers
\begin{equation*}
\tau_{n_1,n_2,K}:T_{n_2,\mathcal{I}}(X)_{K} \to T_{n_1,\mathcal{I}}(X)_{K}
\end{equation*}
(induced by \(\tau_{n_1,n_2}\) defined in Lemma~\ref{lemma:tube-ind-system})
is an admissible open immersion of affinoid dagger spaces represented by a
Weierstraß subdomain.
\end{lemma}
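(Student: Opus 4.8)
The plan is to identify, for every $m$, the generic fibre $T_{m,\mathcal{I}}(X)_K$ with an explicit Weierstraß subdomain of $X_K$, and then to read off $\tau_{n_1,n_2,K}$ as a Weierstraß subdomain from the transitivity of such domains. Write $X=\mathrm{Spwf}(A)$ and $\mathcal{I}=(f_1,\ldots,f_r)$. I would first unwind Definition~\ref{definition:tube-of-closed-radii} exactly as in Example~\ref{example:tube-of-a-point}: on the chart of the blow-up of $(\varpi)+\mathcal{I}^m$ where $\varpi$ generates the transform of this ideal, each blow-up coordinate is $h_\alpha/\varpi$ with $h_\alpha=f_{i_1}\cdots f_{i_m}$ a monomial generator of $\mathcal{I}^m$, and $h_\alpha/\varpi$ already lies in $A[1/\varpi]$. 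Hence, after inverting $\varpi$, the tube is the Weierstraß subdomain of $X_K=\mathrm{Sp}(A[1/\varpi])$ defined by the functions $h_\alpha/\varpi$. Since $|h_\alpha|=\prod_j|f_{i_j}|\le(\max_i|f_i|)^m$, the mixed conditions $|h_\alpha/\varpi|\le 1$ are all implied by the pure-power conditions $|f_i^m/\varpi|\le 1$, so one obtains
\begin{equation*}
T_{m,\mathcal{I}}(X)_K=\bigl\{x\in X_K:\ |f_i(x)|\le|\varpi|^{1/m},\ i=1,\ldots,r\bigr\},
\end{equation*}
a Weierstraß subdomain of $X_K$ cut out by the elements $f_i^m/\varpi\in A[1/\varpi]$.

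Granting this, the lemma is almost formal. Because $|\varpi|<1$ we have $|\varpi|^{1/n_1}>|\varpi|^{1/n_2}$ for $n_1>n_2$, so $T_{n_2,\mathcal{I}}(X)_K\subseteq T_{n_1,\mathcal{I}}(X)_K$ and
\begin{equation*}
T_{n_2,\mathcal{I}}(X)_K=\bigl\{x\in T_{n_1,\mathcal{I}}(X)_K:\ |f_i(x)^{n_2}/\varpi|\le 1,\ i=1,\ldots,r\bigr\}.
\end{equation*}
The functions $f_i^{n_2}/\varpi\in A[1/\varpi]$ restrict to elements of $\mathcal{O}(T_{n_1,\mathcal{I}}(X)_K)$, and a Weierstraß subdomain of a Weierstraß subdomain cut out by restrictions of functions from the ambient space is again a Weierstraß subdomain, immediately from the definition (Example~\ref{example:weierstrass-domain}). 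This identifies $\tau_{n_1,n_2,K}$ with the inclusion of the Weierstraß subdomain of $T_{n_1,\mathcal{I}}(X)_K$ defined by $|f_i^{n_2}/\varpi|\le 1$, which in particular is an admissible open immersion of affinoid dagger spaces ($X_K$ being affinoid, all these subdomains are affinoid).

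The one step with genuine content is the identification in the first paragraph at the level of dagger structure sheaves rather than merely of point sets, and I expect the main obstacle to lie here. The subtlety is that $T_{m,\mathcal{I}}(X)_K$ is presented through the overconvergent blow-up algebra, and one must check that this presentation coincides with the Weierstraß algebra $A[1/\varpi]\langle u_1,\ldots,u_r\rangle^{\dagger}/(u_i-f_i^m/\varpi)$. Rather than manipulating the dagger generators and blow-up relations directly, I would settle this by completion: the natural map $T_{m,\mathcal{I}}(X)_K\to X_K$ lands set-theoretically in the Weierstraß domain above, hence factors through it by the universal property of subdomains, and by Große-Klönne's criterion \cite[Theorem~2.19(4)]{grosse-klonne:rigid-analytic-spaces-with-overconvergent-structure-sheaf} it suffices to see that the induced map of completions is an isomorphism. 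By Lemma~\ref{lemma:completing-tubes-get-tubes} (together with its closed-tube analogue, which follows from Lemma~\ref{lemma:completion-vs-presentation}) both completions are the rigid analytic tube of radius $|\varpi|^{1/m}$, for which the identification with the corresponding rigid Weierstraß domain is part of Berthelot's theory of tubular neighbourhoods \cite[\S1.1]{berthelot:rigid-cohomology-compact-support}. Transporting these isomorphisms back through the completion functor yields the first-paragraph identification and completes the argument.
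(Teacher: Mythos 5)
Your proof is correct and follows essentially the same route as the paper: both identify $T_{m,\mathcal{I}}(X)_K$ as the Weierstraß subdomain of $X_K$ cut out by the functions $g/\varpi$ for $g$ generating $\mathcal{I}^m$, and then conclude by the transitivity observation that a Weierstraß subdomain of a Weierstraß subdomain (cut out by restrictions of ambient functions) is again Weierstraß. Your third paragraph merely supplies, via completion and Gro\ss e-Kl\"onne's criterion, a careful justification of the structure-sheaf identification that the paper asserts directly from the definition of the blow-up chart.
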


\begin{proof}
Clearly, if \(U_1 \subset U_2\) are two affinoid subdomains of an affinoid dagger space
\(U\), and if \(U_i\) are all Weierstraß domains of \(U\), then \(U_1\) is a
Weierstraß domain of \(U_2\). Therefore, it suffices to prove that the morphism
\begin{equation*}
\tau_{n,K} : T_{n,\mathcal{I}}(X)_{K} \to X_{K}
\end{equation*}
is an open immersion of a Weierstraß subdomain. But then according to the
definition, the former is obtained by forcing the affinoid functions \(g/\varpi\) on
\(X_{K}\) to be \(\leq 1\), where \(g \in\mathcal{I}^{n}\),
hence is indeed a Weierstraß domain. We win.
\end{proof}

\begin{lemma}
\label{lemma:density-inclusion}
Let hypotheses and the notation be as in Lemma~\ref{lemma:rigid-tube-inclusion-are-weierstrass-domains}.
Then the morphism \(\tau_{n_1,n_2, K}^{\ast}\) on affinoid algebras has
dense image (with respect to the affinoid topology).
\end{lemma}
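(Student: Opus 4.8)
The plan is to deduce this immediately from the Weierstraß domain structure already established, together with the general density property recorded in Example~\ref{example:weierstrass-domain}(2). First I would recall, via Lemma~\ref{lemma:rigid-tube-inclusion-are-weierstrass-domains}, that since \(n_1 > n_2\) the morphism \(\tau_{n_1,n_2,K}\) realizes \(T_{n_2,\mathcal{I}}(X)_K\) as a Weierstraß subdomain of \(T_{n_1,\mathcal{I}}(X)_K\). Writing \(A' = \Gamma(T_{n_1,\mathcal{I}}(X)_K,\mathcal{O})\) and \(B' = \Gamma(T_{n_2,\mathcal{I}}(X)_K,\mathcal{O})\), the homomorphism \(\tau_{n_1,n_2,K}^\ast\colon A' \to B'\) is then exactly the restriction map to a Weierstraß subdomain, so that \(B' = A'\langle t_1,\ldots,t_s\rangle^\dagger/(t_i - f_i)\) for suitable \(f_i \in A'\).

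Next I would invoke Example~\ref{example:weierstrass-domain}(2) directly: the image of \(A'\) (equivalently, of the subalgebra \(A'[t_1,\ldots,t_s]/(t_i - f_i) \cong A'\)) is dense in the Banach completion \(\widehat{B'}\). Thus the image of \(\tau_{n_1,n_2,K}^\ast\) is dense in \(\widehat{B'}\).

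Finally I would upgrade density in \(\widehat{B'}\) to density in \(B'\) for the affinoid topology. The key observation is that, by the very definition of the affinoid topology (Situation~\ref{situation:affinoid-vs-dagger-algebras}, Lemma~\ref{lemma:completion-vs-presentation}), the inclusion \(B' \hookrightarrow \widehat{B'}\) is a topological embedding with completion \(\widehat{B'}\); in other words \(B'\) carries the subspace topology. Hence for any \(b \in B'\) a sequence in the image of \(A'\) converging to \(b\) in \(\widehat{B'}\) automatically converges to \(b\) within \(B'\), which gives the desired density in \(B'\).

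I expect the only delicate point --- hardly an obstacle --- to be this last step: one must distinguish density in the (complete) Banach algebra \(\widehat{B'}\) from density in the incomplete dagger algebra \(B'\), the bridge being that \(B'\) sits in \(\widehat{B'}\) as a topological subspace. No new analytic input is needed beyond the previous two lemmas and the density observation of Example~\ref{example:weierstrass-domain}.
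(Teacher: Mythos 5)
Your proposal is correct and follows essentially the same route as the paper: reduce via Lemma~\ref{lemma:rigid-tube-inclusion-are-weierstrass-domains} to the general fact that restriction to a Weierstra\ss{} subdomain has dense image, which the paper proves by the same truncated-power-series approximation recorded in Example~\ref{example:weierstrass-domain}(2). Your extra step upgrading density in \(\widehat{B'}\) to density in \(B'\) (using that \(B'\) carries the subspace topology of its completion) is a point the paper leaves implicit, and it is a worthwhile clarification, not a deviation.
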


\begin{proof}
This is a corollary of Lemma~\ref{lemma:rigid-tube-inclusion-are-weierstrass-domains}, because that whenever a morphism
\(\varphi: U' \to U\) is an open immersion of affinoid dagger spaces that is a
Weierstraß domain, \(\varphi^{\ast}\) always has dense image: if \(B\) is a
dagger algebra and \(h \in B\), then we can always approximate any element
in \(B\langle \zeta \rangle^{\dagger}/(\zeta - h)\) by elements of the form
\(\sum_{j=1}^{N} b_j h^{j}\), which lies in \(B\).
\end{proof}

\begin{lemma}
\label{lemma:union-generic-fiber-tubular-neighborhood}
Let the notation be as in Definition~\ref{definition:tube-of-closed-radii}.
We have
\begin{equation*}
\bigcup_{m=1}^{\infty} T_{m,\mathcal{I}}(X)_K = (Z)_{X}
\end{equation*}
where \(Z\) is the vanishing scheme of \(\mathcal{I}\).
The above covering is also an admissible covering.
\end{lemma}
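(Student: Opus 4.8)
The plan is to reduce to the affine case and then match the two families of Weierstraß domains by a direct comparison of radii. Since both $T_{m,\mathcal{I}}(-)$ and the tubular neighborhood $(Z)_{-}$ are compatible with restriction to Zariski open subschemes of $X$ (the former by the local nature of the admissible blowup construction, the latter because $\mathrm{sp}$ restricts well to opens, cf.~Proposition~\ref{proposition:defining-raynaud-generic-fiber}), and since admissibility of a covering is a local question, I would first assume $X = \mathrm{Spwf}(R)$ is affine with $\Gamma(X,\mathcal{I})$ generated by $f_1,\ldots,f_r$.

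In this affine situation, Lemma~\ref{lemma:rigid-tube-inclusion-are-weierstrass-domains} identifies $T_{m,\mathcal{I}}(X)_K$ with the Weierstraß subdomain of $X_K$ carved out by the conditions $|g/\varpi| \le 1$ for $g \in \mathcal{I}^{m}$. Expanding $\mathcal{I}^{m}$ in terms of the generators $f_i$, the strongest such condition comes from the monomial $f_j^{m}$ at the index $j$ maximizing $|f_j(x)|$, since $|f_{i_1}(x)\cdots f_{i_m}(x)| = |f_{i_1}(x)|\cdots|f_{i_m}(x)|$ is maximized by $(\max_i |f_i(x)|)^m$; hence
\begin{equation*}
T_{m,\mathcal{I}}(X)_K = \{x \in X_K : \max_i |f_i(x)| \le |\varpi|^{1/m}\}.
\end{equation*}
On the other hand, Lemma~\ref{lemma:equation-and-tube} gives $(Z)_X = \{x \in X_K : \max_i |f_i(x)| < 1\}$. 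Since $|\varpi| < 1$, the radii $|\varpi|^{1/m}$ increase to $1$ as $m \to \infty$, so a point $x$ satisfies $\max_i |f_i(x)| < 1$ if and only if $\max_i |f_i(x)| \le |\varpi|^{1/m}$ for some $m$. This yields the set-theoretic equality $\bigcup_m T_{m,\mathcal{I}}(X)_K = (Z)_X$, and in fact exhibits the $T_{m,\mathcal{I}}(X)_K$ as precisely the Weierstraß domains appearing in the nested union~\eqref{eq:tube-union-weierstrass} for the cofinal sequence $\varepsilon_m = |\varpi|^{1/m}$.

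For admissibility I would use that the $T_{m,\mathcal{I}}(X)_K$ form an increasing chain of Weierstraß domains (Lemmas~\ref{lemma:tube-ind-system} and~\ref{lemma:rigid-tube-inclusion-are-weierstrass-domains}) whose union is $(Z)_X$. It then suffices to show every affinoid subdomain $V \subset (Z)_X$ lies in some $T_{m,\mathcal{I}}(X)_K$: on the quasi-compact $V$ the supremum seminorm of each $f_i$ is attained by the maximum modulus principle, so $c = \max_i |f_i|_{\sup,V} < 1$, and any $m$ with $|\varpi|^{1/m} \ge c$ gives $V \subseteq T_{m,\mathcal{I}}(X)_K$. This is exactly the criterion for the nested union to be an admissible covering; alternatively, one may simply invoke Lemma~\ref{lemma:equation-and-tube}, where $(Z)_X$ was already exhibited as an admissible nested union of Weierstraß domains of this very shape. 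The set-theoretic equality is a one-line radius comparison, and the only point requiring care is the admissibility, where the quasi-compactness of affinoid subdomains (through the maximum modulus principle) is what forces the chain $\{T_{m,\mathcal{I}}(X)_K\}$ to be cofinal among all Weierstraß subdomains of $(Z)_X$.
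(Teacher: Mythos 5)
Your proposal is correct and follows essentially the same route as the paper's (much terser) proof: reduce to the affine case, identify $T_{m,\mathcal{I}}(X)_K$ as the locus $|g|\leq|\varpi|$ for $g\in\mathcal{I}^{m}$, equivalently $\max_i|f_i|\leq|\varpi|^{1/m}$, and compare with the description of $(Z)_X$ as $\max_i|f_i|<1$ from Lemma~\ref{lemma:equation-and-tube}. You merely make explicit the radius comparison and the admissibility argument (via the maximum modulus principle on affinoid subdomains) that the paper leaves as ``now clear.''
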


\begin{proof}
This is a local problem. Then we can write down a presentation of
\(\mathcal{I}\), and use the explicit description of tubular neighborhood as
well as the generic fibers of \(T_{n,\mathcal{I}}\). In fact, after
inverting \(\varpi\), the \(m\)th universal enlargement becomes the domain
defined, in \(X_K\), by \(|f| \leq |\varpi|\), \(f\) runs through elements in
\(\mathcal{I}^{m}\), and \((Z)_X\) is defined by \(|g| < 1\), for \(g \in
\mathcal{I}\). The assertion is now clear.
\end{proof}
\begin{situation}
\label{situation:colim-topos}
Let the notation be as in Definition~\ref{definition:tube-of-closed-radii}. Let \(Z\) be the vanishing scheme of
\(Z\).
Write \(T_n = T_{n,\mathcal{I}}(X)\). Recall by definition we have natural
morphisms \(\gamma_n: T_n \to P\). We have set \(Z_n = \gamma_n^{-1}(Z)\), and
we have \((T_n \otimes_V k)^{\text{red}} = Z_n^{\text{red}}\). So the ambient
topological space of  \(T_n\)  is no different from that of \(Z_n\), which maps
to the ambient space of \(Z\) in a continuous fashion. So we get a morphism of
sheaf topoi: \(\gamma_n:T_n^{\sim} \to Z_{\text{Zar}}^{\sim}\). This is not a
morphism of ringed topoi though.

We define a site \(\vec{T}\) as follows.
The objects are the open subsets of some \(T_n\).
If \(U_n\) is open in \(T_n\) and \(U_m\) is open in \(T_m\), with \(n \leq m\),
then a morphism \(U_n \to U_m\) is a commutative diagram
\begin{equation*}
\xymatrix{
  U_n \ar[r] \ar[d] & U_m\ar[d] \\
  T_n \ar[r]        & T_m
}.
\end{equation*}
If \(n > m\) there is no morphism from \(U_n\) to \(U_m\). The coverings of
\(U_n\) are usual Zariski coverings.

We can similarly define a site \(\vec{T}_K\) for the generic fibers \(T_{n,K}\).
The specialization morphisms give rise to a morphism of sites
\(\vec{T}_K \to \vec{T}\) which is denoted by \(\vec{\mathrm{sp}}\).
Since \(T_{n,K}\) form an admissible nested covering of \((Z)_X\), there is a
natural full and faithful morphism \((Z)^{\sim}_X \to \vec{T}{}^{\sim}_K\) of
sheaf topoi, given by restrictions. Putting everything together, we get a commutative diagram of topoi:
\begin{equation}
\label{eq:theorem-b-diagram}
\xymatrix{
  \vec{T}{}^{\sim}_K \ar[d]_{\vec{\text{sp}}} & \ar[l]_{\text{res}} (Z)_{X}^{\sim} \ar[d]^{\text{sp}}\ar[r] & X_K \ar[d]^{\text{sp}} \\
  \vec{T}^{\sim}     \ar[r]^{\gamma}    & Z_{\text{Zar}}^{\sim} \ar[r]                   & X_{\text{Zar}}^{\sim}
}
\end{equation}
where \(\gamma_{\ast}(F_n)\) is the inverse limit of the sheaves
\(\gamma_{n\ast}(F_n)\).
If \((F_n)\) is a collection of coherent sheaves on \(\vec{T}_{K}\), then
\(\mathrm{R}^{i}\mathrm{sp}_{\ast}(F_n)=0\), since cohomology of coherent
sheaves on an affinoid dagger space is
zero~\cite[Proposition~3.1]{grosse-klonne:rigid-analytic-spaces-with-overconvergent-structure-sheaf}
for all \(i > 0\).
By definition, \(R^i\mathrm{res}_{\ast}(F)\) is zero for any sheaf on \((Z)_X\).
\end{situation}

\begin{definition}
\label{definition:admissible}
Let the notation be as in~\ref{situation:colim-topos}.
Let \(\mathcal{O}_{\vec{T}_K}=(\mathcal{O}_{T_{n,K}})\) be the sheaf on
\(\vec{T}_K\) defined by \(\mathrm{res}_{\ast}(\mathcal{O}_{(Z)_X})\). We say a
sheaf \((F_n)\) of \(\mathcal{O}_{\vec{T}_K}\)-modules is \emph{good} if
\begin{enumerate}
\item \(F_n\) is coherent over \(\mathcal{O}_{T_{n,K}}\),
\item Zariski locally on \(Z\), the transition maps
\(F_n \otimes \mathcal{O}_{T_{n,K}} \to F_{n-1}\)
are isomorphism.
\end{enumerate}
Then good sheaves are essential images of coherent
\(\mathcal{O}_{(Z)_X}\)-modules under the functor
\(res_{\ast}\). We say a sheaf on \(\vec{T}\) is good if it is an essential
image of a good sheaf on \(\vec{T}_K\) under \(\mathrm{sp}\).
\end{definition}
We next need an incarnation of Kiehl's Theorem B for dagger spaces.
Note that Theorem B for dagger spaces is significantly harder than the original.
The simple-minded method of reducing to completion does not work, as one does
not have control of the radii of the fringe algebras appeared in the process.
Luckily, the theorem is made available by
F.~Bambozzi~\cite{bambozzi:theorem-b-dagger-quasi-stein} for
quasi-Stein dagger spaces that are closed subspaces of products of dagger disks and
open disks.

\begin{lemma}
\label{lemma:good-sheaf-gamma-acyclic}
Let \((F_n)\) be a good sheaf on \(\vec{T}\). Then
\begin{enumerate}
\item \(R\gamma_{\ast}(F_n) = \gamma_{\ast}(F_n)\).
\item For any coherent sheaf \(\mathcal{F}\) on the dagger space \((Z)_X\), we have
\(R\mathrm{sp}_{\ast}(\mathcal{F}) = \mathrm{sp}(\mathcal{F})\).
\end{enumerate}
\end{lemma}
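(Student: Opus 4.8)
The plan is to prove part (2) first---this is where all the genuine analysis sits---and then extract part (1) formally from the Leray spectral sequence attached to the commutative square of topoi in~\eqref{eq:theorem-b-diagram}.

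For part (2) I would observe that the vanishing \(R^i\mathrm{sp}_\ast\mathcal{F}=0\) (\(i>0\)) is local on \(Z_{\mathrm{Zar}}\), so since the opens \(D(f)\cap Z\) form a basis it suffices to show \(H^i(\mathrm{sp}^{-1}(U),\mathcal{F})=0\) for \(i>0\) when \(U=D(f)\cap Z\) and the ambient weak formal scheme is affine, say \(\mathrm{Spwf}(A)\). By Lemma~\ref{lemma:equation-and-tube} and Lemma~\ref{lemma:union-generic-fiber-tubular-neighborhood} the space \(\mathrm{sp}^{-1}(U)\) is the corresponding open tube, which is quasi-Stein, exhausted by a nested family of Weierstraß domains (Lemma~\ref{lemma:rigid-tube-inclusion-are-weierstrass-domains}) whose transition maps have dense image by Lemma~\ref{lemma:density-inclusion}---exactly the Mittag--Leffler input a Theorem~B argument needs. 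To actually invoke Bambozzi's theorem~\cite{bambozzi:theorem-b-dagger-quasi-stein} I must present this tube in the shape his result demands, namely as a closed subspace of a product of dagger disks and open disks.

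The hard part will be precisely this geometric identification. Writing the defining ideal as \((f_1,\dots,f_r)\) and choosing a closed immersion \(\iota\colon\mathrm{Sp}(A\otimes K)\hookrightarrow\mathbb{B}^m\) of the affinoid dagger space into a dagger polydisk \(\mathbb{B}^m=\mathrm{Sp}(K\langle t_1,\dots,t_m\rangle^\dagger)\) (possible because \(A\otimes K\) is a quotient of a Monsky--Washnitzer algebra), I would use the graph map
\[
x\longmapsto\bigl(\iota(x),\,f_1(x),\dots,f_r(x)\bigr)
\]
to identify the tube \(\{x:|f_i(x)|<1\}\) with the closed analytic subspace of \(\mathbb{B}^m\times D(0;1^-)^r\) cut out by the relations defining \(\iota\) together with the equations \(z_i=f_i\); on the tube each \(|f_i|<1\), so the last \(r\) coordinates indeed land in the open disk \(D(0;1^-)\) (compare Example~\ref{example:affine-space-product-tube}). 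Since the target is a product of dagger disks and open disks and the tube is quasi-Stein, Bambozzi's Theorem~B yields \(H^i(\mathrm{sp}^{-1}(U),\mathcal{F})=0\) for all \(i>0\); sheafifying over the basis of such \(U\) gives \(R^i\mathrm{sp}_\ast\mathcal{F}=0\), which is part (2).

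For part (1), recall that by definition a good sheaf \((F_n)\) on \(\vec{T}\) is of the form \((F_n)=\vec{\mathrm{sp}}_\ast\mathrm{res}_\ast\mathcal{F}\) for a coherent \(\mathcal{O}_{(Z)_X}\)-module \(\mathcal{F}\). Commutativity of the left square of~\eqref{eq:theorem-b-diagram} gives \(\mathrm{sp}=\gamma\circ\vec{\mathrm{sp}}\circ\mathrm{res}\), hence \(R\mathrm{sp}_\ast=R\gamma_\ast\circ R\vec{\mathrm{sp}}_\ast\circ R\mathrm{res}_\ast\). Now \(R\mathrm{res}_\ast\mathcal{F}=\mathrm{res}_\ast\mathcal{F}\) since \(R^i\mathrm{res}_\ast\) vanishes on every sheaf (\ref{situation:colim-topos}), and \(\mathrm{res}_\ast\mathcal{F}\) is coherent on \(\vec{T}_K\); then \(R\vec{\mathrm{sp}}_\ast(\mathrm{res}_\ast\mathcal{F})=\vec{\mathrm{sp}}_\ast(\mathrm{res}_\ast\mathcal{F})=(F_n)\), because \(R^i\vec{\mathrm{sp}}_\ast\) annihilates coherent sheaves (affinoid dagger spaces carry no higher coherent cohomology, \ref{situation:colim-topos}). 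Combining these, \(R\gamma_\ast(F_n)=R\mathrm{sp}_\ast\mathcal{F}\), and by part (2) this equals \(\mathrm{sp}_\ast\mathcal{F}=\gamma_\ast\vec{\mathrm{sp}}_\ast\mathrm{res}_\ast\mathcal{F}=\gamma_\ast(F_n)\), which is part (1). Thus the whole lemma rests on the single nontrivial point---Theorem~B for these particular quasi-Stein tubes, where the naive reduction-to-completion is unavailable---while the rest is bookkeeping with the diagram~\eqref{eq:theorem-b-diagram}.
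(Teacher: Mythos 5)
Your proposal is correct and follows essentially the same route as the paper: the analytic content is concentrated in part (2), handled by exhibiting the affine-local tube as a closed subspace of a product of a dagger polydisk and open unit disks (via the functions $f_i$ cutting out $Z$) so that Bambozzi's Theorem~B applies, and part (1) is then formal bookkeeping with the commutative square~\eqref{eq:theorem-b-diagram}, the exactness of $\mathrm{res}_\ast$, and the vanishing of higher coherent cohomology on affinoid dagger spaces. The only cosmetic difference is the order (you prove (2) first, the paper first reduces (1) to (2)); the substance is identical.
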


\begin{proof}
Since the problem is local, we are reduced to the situation where \(Z\) and
\(X\) are affine.

We first show that the first assertion follows from the second. Each \(F_n\) is
of the form \(G_n \otimes K\) for some coherent sheaf. Since \(T_n\) has Zariski
topology, we have \(\mathrm{H}^{e}(T_n, F_n) = \mathrm{H}^{e}(T_n,G_n)\otimes K\)
The latter vanishes for \(e > 0\). This shows that the functor
\(\vec{\text{sp}}\) is exact on good sheaves, or
\(\vec{\text{sp}}_{\ast} = R\vec{\text{sp}}_{\ast}\). Since
\(R\gamma_{\ast} \circ R\vec{\text{sp}}_{\ast} \circ R\text{res}_{\ast} = R \text{sp}_{\ast}\)
the vanishing of \(R^{i}\text{sp}\), \(R^{j}\vec{\text{sp}}\) and
\(R^{\ell}\text{res}_{\ast}\) (\(i,j,\ell>0\)) imply the vanishing of \(R^{s}\gamma_{\ast}\)
for all \(s > 0\) by the spectral sequence of composition of functors.

Now we turn to the proof of (2). With the hypothesis that \(Z\) and \(X\) are
affine, \((Z)_X\) is a closed subspace of a quasi-Stein
space satisfying the hypotheses
of~\cite[Corollary~4.22]{bambozzi:theorem-b-dagger-quasi-stein}.
In fact, \(X_K\) is a closed subspace of a dagger closed disk defined by a
collection of function, \((Z)_X\) is the closed subspace of
\(X_K \times D(0;1^{-})^r\) defined by the equations of \(t_i = f_i\), where
\(f_1,\ldots,f_r\) are a collection of generators of the ideal defining \(Z\).
Thereby by loc.~cit.~the cohomology groups
\(\mathrm{H}^{i}((Z)_X,\mathcal{F})\) are zero, i.e., ``Theorem B holds''.
The exactness of \(\mathrm{res}_{\ast}\), the vanishing of coherent cohomology
of affinod dagger spaces  (whence the vanishing of \(R^{i}\mathrm{sp}_{\ast}\)
for the specialization morphisms \(\mathrm{sp}: T_{n,K}\to T_{n}\)), and the
commutativity of the diagram~\eqref{eq:theorem-b-diagram} together imply the
lemma.
\end{proof}

\begin{definition}
\label{definition:analyti-cohomology}
Given a pair \((X,Z)\) as in~\ref{situation:colim-topos}, we define the
\emph{analytic cohomology} of \(Z\) with respect to \(X\) to be the de~Rham
cohomology \(\mathrm{H}_{\text{dR}}^{\bullet}((Z)_X)\). To be precise, on any
dagger space \(Y\) there is a complex of coherent sheaves
\(\Omega_{Y}^{\bullet}\) defined as usual by using exterior powers of the \emph{weakly
completed} 1-differentials on \(Y\), and the de~Rham cohomology of \(Y\) is
then the hypercohomology of this complex.
(We have avoided the cumbersome dagger notation, as henceforth we
consider only weakly completed and completer differentials. We shall put hats on
completed ones.)
\end{definition}
\begin{definition}
\label{definition:universal-enlargement}
Let \(X\) now be a variety over
\(k\). Let \((P,Z,z)\) be a widening of \(X\). Then the construction
in~\ref{situation:colim-topos} gives rise to a collection of enlargements
\begin{equation*}
(T_{n,I_Z}(X),Z_n,z_n : Z_n \to Z \to X).
\end{equation*}
The ind-object \(\{(T_{n,Z}(X),Z_n,z_n : Z_n \to Z \to X)\}\) in the
Monsky--Washnitzer site is called the universal enlargements associated to the
widening \(P\).
\end{definition}

\begin{situation}
\label{situation:same-sheaf}
It is not hard to see that the colimit of
\(\{(T_{n,Z}(X),Z_n,z_n : Z_n \to Z \to X)\}\) (in the Monsky--Washnitzer topos)
surjects onto the sheaf represented by the widening \(P\). To check that is an
isomorphism, it suffices to show the transition maps are injective as sheaves.
Checking locally, this is implied by the density result of the Weierstraß
domains. Thereby the colimit of
\(\{(T_{n,Z}(X),Z_n,z_n : Z_n \to Z \to X)\}\) (in the Monsky--Washnitzer topos)
is isomorphic to the sheaf represented by \(P\).
\end{situation}

\section{Computing Monsky--Washnitzer cohomology}
\label{sec:org0ac363b}
\noindent
\textbf{Overview.}
In this section we prove that we can use analytic cohomology to compute
Monsky--Washnitzer cohomology. To begin with, we draw a big diagram of topoi
indicating what will be going on. In the sequel, let \(X\) be a variety over
\(k\). Let \(P\) be a smooth weak formal scheme containing \(X\) as a
subvariety. Therefore \((P,X,\mathrm{Id})\) is a widening, which also defines a
sheaf on the Monsky--Washnitzer site. We use \((X/V)_{\text{MW}}|_P\) to denote
the relative site consisting of enlargements of \((X/V)\) together with a
specified arrow (of sheaves) to the widening \(P\).
\begin{equation*}
\xymatrix{
  (X/V)_{\text{MW}}|^{\sim}_P \ar[d]_{j_P} \ar[r]^{\varphi_{\ast}} & \vec{P}^{\sim}   \ar[d]^{\gamma}& \vec{P}_K^{\sim} \ar[l]_{\vec{\text{sp}}}\\
  (X/V)^{\sim}_{\text{MW}}  \ar[r]_{u_{X/V}}                       &  X^{\sim} &  \ar[l]^{\text{sp}} (X)_P^{\sim} \ar[u]_{\text{res}}
}
\end{equation*}
We need the whole section to explain the details of these functors, but let us
at this point have an overview of the entire process.
The left diagram is about analytic cohomology.
The analytic cohomology is computed by
\(R\Gamma(X,R\mathrm{sp}_{\ast}\Omega_{(X)_P}^{\bullet})\).
By the ``Theorem B'' of Bambozzi--Kiehl,
Lemma~\ref{lemma:good-sheaf-gamma-acyclic}, the specialization
functor is acyclic with respect to coherent sheaves, so we can suppress the
derived symbol and use the following detour to compute the analytic cohomology:
\begin{equation*}
R\Gamma(X,\mathrm{sp}_{\ast}\Omega_{(X)_P}^{\bullet}) = R\Gamma(X, (\gamma_{\ast} \circ \vec{\mathrm{sp}}_\ast \circ \mathrm{res}_{\ast})\Omega^{\bullet}_{(X)_P}).
\end{equation*}
We have also explained that the all three functors \(\vec{\text{sp}}\),
\(\mathrm{res}\), and \(\gamma\) are of vanishing higher direct images in
Lemma~\ref{lemma:good-sheaf-gamma-acyclic}.

On the left square, the functor \(\varphi_{\ast}\) is not part of a morphism of
topoi. But it is exact. On the Monsky--Washnitzer topos, we can
construct, using differential forms, a complex of sheaves which we call
\(\mathrm{dR}_P\). It turns out
\(Ru_{X/V,\ast}\mathrm{dR}_P^{\bullet} = \mathrm{sp}_{\ast}\Omega_{(X)_P}^{\bullet}\).
So the analytic cohomology can also be computed by
\begin{equation*}
\mathrm{R}\Gamma(X,Ru_{X/V,\ast} \mathrm{dR}_P^{\bullet}).
\end{equation*}
It turns out \(Rj_{P,\ast} = j_{P,\ast}\) for the above complex,
and there is a so-called ``Poincaré lemma'', i.e., there is a quasi-isomorphism
\begin{equation*}
\mathcal{O}_{X/V}^{\text{an}} \to \mathrm{dR}_{P}^{\bullet}
\end{equation*}
Thereby we see the above complex can be replaced by
\begin{equation*}
R\Gamma(X,Ru_{X/V,\ast} \mathcal{O}_{X/V}^{\text{an}}),
\end{equation*}
that is, \emph{the Monsky--Washnitzer cohomology agrees with the analytic cohomology}.

Let us carry out the above scheme of proof.
\begin{situation}
\label{situation:notation-frame}
In this section, let \(P\) be a quasi-compact smooth weak formal scheme over
\(V\). Let \(X\) be a closed subvariety of \(P \otimes_V k\). Then
\((P,X,\mathrm{Id}_X)\) is a widening for \(X/V\). When there is no ambiguity we
shall also use \(P\) to denote the widening \((P,X,\mathrm{Id}_X)\). This is
less accurate but more economical.

Let \(\{P_n\}\) be the universal enlargement associated to the widening \(P\).
See Definition~\ref{definition:universal-enlargement}. Let \(\vec{P}\) be
the topos defined in~\ref{situation:colim-topos}.
\end{situation}

\begin{situation}
\label{situation:relative-topos}
\textbf{Definitions.}
(i) The relative site \((X/V)_{\text{MW}}|_P\) is the site whose objects are arrows
\(T \to P\) of widenings, in which \(T\) is an enlargement of \((X/V)\). The
notion of coverings is defined in the obvious fashion. Let
\((X/V)_{\text{MW}}|_P^{\sim}\) be its sheaf topos. There is a natural morphism of
topoi \(j_P : (X/V)_{\text{MW}}|_P^{\sim} \to (X/V)_{\text{MW}}^{\sim}\).

(ii) We want to define a morphism of topoi \(\varphi_P\) from
\((X/V)_{\text{MW}}|_P^{\sim}\) to \(P_{\text{Zar}}^{\sim}\).
Let \(\mathcal{F}\) be a Zariski sheaf on \(P\), \(\mathcal{G}\) a sheaf on
\((X/V)_{\text{MW}}|_P\). Let \(U\) be an open weak formal subscheme of \(P\).
Then \((U,U\cap X, U\cap X \to X)\) is a widening which represents an object in the
relative topos that is denoted by \(h_U\). Let \(\{U_n\}\) be its associated
universal enlargement of \(U\), so \(h_U = \text{colim}\, h_{U_n}\).
Let \(g: T \to P\) be a morphism of widenings, in which \(T\) is an enlargement.
So \(g\) is an object in the relative site.
Define
\begin{itemize}
\item \(\varphi_P^{-1}(\mathcal{F})(g) = g^{-1}(\mathcal{F})(T)\), and
\item \(\varphi_{P,\ast}(\mathcal{G})(U) = \mathrm{Hom}(h_U, \mathcal{G}) = \lim_n
  \mathcal{G}(U_n)\).
\end{itemize}
If \(\mathcal{F}\) is a coherent sheaf on \(P\), define
\[
\varphi_P^{\ast}\mathcal{F} =
\varphi_P^{-1}\mathcal{F}\otimes_{\varphi_P^{-1}\mathcal{O}_P}
j_{P}^{-1}\mathcal{O}_{X/V}^{\text{an}}.
\]
Clearly \(\varphi_{P\ast}\varphi_P^{\ast}\mathcal{F}=\mathcal{F}\otimes_{V}K\).

(iii) Let \(\mathcal{G} \in (X/V)_{\text{MW}}|_P^{\sim}\).
We define the functor, which is not a part of a morphism of topoi, \(\varphi_{\ast}\)
to be the functor sending \(\mathcal{G}\) to \((\mathcal{G}|_{P_n})_n\), that
is, if \(U_n\) is an open of \(P_n\), then \((U_n, U_n \cap (P_n \otimes_V k))\) is
naturally an enlargement of \((X/V)\) lying over \(P\), and
\(\mathcal{G}|_{P_n}(U_n)\) is the
value of \(\mathcal{G}\) at \((U_n, (U_n \cap P_n \otimes_V k))\).
\end{situation}

\begin{lemma}
\label{lemma:exact-phi}
The functor \(\varphi_{\ast}\) is exact.
\end{lemma}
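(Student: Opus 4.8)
The plan is to show that $\varphi_{\ast}$ commutes with finite limits and colimits at the level of sections, so that exactness can be checked section-by-section. Recall from~\ref{situation:relative-topos}(iii) that $\varphi_{\ast}$ sends a sheaf $\mathcal{G}$ on $(X/V)_{\text{MW}}|_P$ to the object $(\mathcal{G}|_{P_n})_n$ of $\vec{P}^{\sim}$, where for an open $U_n \subseteq P_n$ the pair $(U_n, U_n \cap (P_n \otimes_V k))$ is an enlargement lying over $P$, and $(\mathcal{G}|_{P_n})(U_n) = \mathcal{G}(U_n, U_n \cap (P_n\otimes_V k))$. The essential point is that this is literally \emph{evaluation} of $\mathcal{G}$ at a specified family of objects of the relative site: the functor reads off the values of $\mathcal{G}$ at the enlargements coming from opens of the universal enlargements $P_n$.

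First I would observe that exactness of $\varphi_{\ast}$ is exactness in the abelian categories of abelian-group-valued sheaves. A sequence of sheaves on $\vec{P}$ is exact if and only if it is exact stalkwise, or equivalently (since $\vec{P}$ carries the Zariski topology on the noetherian spaces $P_n$) it can be tested on the values over sufficiently small opens $U_n$. Thus it suffices to prove that for each such $U_n$, the functor $\mathcal{G} \mapsto \mathcal{G}(U_n, U_n \cap (P_n\otimes_V k))$ is exact on sheaves on the relative site $(X/V)_{\text{MW}}|_P$. Now the key structural fact is that the object $(U_n, U_n\cap(P_n\otimes_V k)) \to P$ is itself an object $h_{U_n}$ of the relative site: evaluating a sheaf $\mathcal{G}$ at this object is the same as $\operatorname{Hom}(h_{U_n}, \mathcal{G})$, i.e.\ taking global sections of $\mathcal{G}$ restricted to the localized site over $h_{U_n}$.

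Next I would use the standard fact that evaluation of an abelian sheaf at a single object of a site is \emph{left exact} automatically (being a limit), so the only thing requiring proof is right exactness, i.e.\ that evaluation at $(U_n, U_n\cap(P_n\otimes_V k))$ preserves surjections. This is where the special geometry enters: the objects $(U_n, U_n\cap(P_n\otimes_V k))$ are enlargements, and a surjection of sheaves $\mathcal{G}' \twoheadrightarrow \mathcal{G}''$ is an epimorphism locally for the Zariski pretopology on enlargements. I would argue that because the universal enlargements $P_n$ already resolve the widening $P$ (as in~\ref{situation:same-sheaf}, the colimit of the $P_n$ \emph{is} the sheaf represented by $P$), every covering of $(U_n, U_n\cap(P_n\otimes_V k))$ in the relative site is refined by Zariski opens of $U_n$ itself; hence the local epimorphism condition is already witnessed on sections over opens of $U_n$, with no further refinement into other enlargements needed. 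Consequently a surjection of sheaves stays surjective after evaluation at these particular objects.

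The main obstacle, and the crux of the argument, will be this last point: verifying that evaluation at $(U_n, U_n\cap(P_n\otimes_V k))$ sends epimorphisms to epimorphisms. In a general topos, evaluation at an object is only left exact, so right exactness is genuinely a special feature of these enlargements. The content is that the objects arising from opens of the $P_n$ are, in a suitable sense, \emph{local} (their only coverings come from honest Zariski opens of themselves, which they already see as sections), so that the sheaf condition does not force any nontrivial descent that could obstruct lifting sections. Once this locality is established — using the cofinality of Zariski refinements established in the universal enlargement construction of~\ref{situation:colim-topos} and~\ref{situation:same-sheaf} — exactness of $\varphi_{\ast}$ follows immediately, since we will have checked both left and right exactness termwise over the opens $U_n$, and exactness in $\vec{P}^{\sim}$ is tested termwise.
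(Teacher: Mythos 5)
Your key observation --- that coverings of the objects \((U_n, U_n\cap(P_n\otimes_V k))\) in the Monsky--Washnitzer site are, by the very definition of the pretopology in Definition~\ref{definition:site}, nothing but Zariski open coverings of \(U_n\) --- is exactly the point on which the paper's proof turns. But you deploy it incorrectly. You reduce the lemma to the claim that the evaluation functor \(\mathcal{G}\mapsto\mathcal{G}(U_n,U_n\cap(P_n\otimes_V k))\) is exact for each object \(U_n\) of \(\vec{P}\), and in particular preserves epimorphisms. That claim is false in general, and it is also not what is needed. An epimorphism of abelian sheaves is only a \emph{local} epimorphism; even granting that every covering of \(U_n\) in the relative site is refined by Zariski opens of \(U_n\), this only tells you that a section of the target lifts after passing to a Zariski covering of \(U_n\), not that it lifts over \(U_n\) itself. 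The obstruction is the usual one: whenever \(\mathrm{H}^1(U_n,\mathcal{K})\neq 0\) for the kernel \(\mathcal{K}\) (and the \(U_n\) range over arbitrary opens of the \(P_n\), so such examples exist), sections do not lift globally. Your sentence asserting that ``the local epimorphism condition is already witnessed on sections over opens of \(U_n\), \dots\ consequently a surjection of sheaves stays surjective after evaluation'' conflates Zariski-local surjectivity with surjectivity on sections; the appeal to~\ref{situation:same-sheaf} does not close this gap, since the resolution of \(P\) by universal enlargements is irrelevant to lifting sections over a fixed \(U_n\).

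The repair is to notice that you never need exactness of evaluation at each object: the target category is the category of sheaves on \(\vec{P}\), where exactness is itself a local condition. Concretely (this is the paper's argument): given an exact sequence \(\mathcal{F}^{\bullet}\) of abelian sheaves on the relative site and a cocycle in \(\mathcal{F}^{\bullet}(U_n)\), exactness furnishes a covering of the enlargement \(U_n\) in the Monsky--Washnitzer site over which the cocycle becomes a coboundary; since that covering is a Zariski open covering of \(U_n\), this is precisely local exactness of the complex \(\varphi_{\ast}(\mathcal{F}^{\bullet})\) of Zariski sheaves on \(\vec{P}\), i.e.\ exactness in \(\vec{P}^{\sim}\). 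With that adjustment your argument collapses to the paper's one-step proof; as written, the reduction to termwise exactness of the sections functor is a genuine gap.
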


\begin{proof}
This follows from the definition of covering in the Monsky--Washnitzer site. In
fact, if we are given an exact sequence of abelian sheaves
\(\mathcal{F}^{\bullet}\), and a cocycle of the complex
\(\mathcal{F}^{\bullet}(U_n)\), then there exists an open covering of the
enlargement \(U_n\) such that the restriction of this cocycle becomes coboundary
we restricting to the smaller opens. Since covering are defined using Zariski
open covering, this exactness implies the exactness of the restriction complex
\(\varphi_{\ast}(\mathcal{F}^{\bullet})\).
\end{proof}
Since \(\varphi_{\ast}\) is not part of a morphism of topoi, we need the
following lemma, whose proof is identical
to~\cite[Lemma~0.4.1]{ogus:convergent-topos}, that connects the
cohomology of a sheaf \(\mathcal{F}\) on \((X/V)_{\text{MW}}|_P\) and the
cohomology of \(\varphi_{\ast}\mathcal{F}\) on the site \(\vec{P}\).

\begin{lemma}
\label{lemma:ogus-flasque}
The functor \(\varphi_{\ast}\) takes injective abelian sheaves to flasque abelian
sheaves. Thereby for any abelian sheaf \(\mathcal{F}\), we have
\(\mathrm{H}^i((X/V)_{\text{MW}}|_P,\mathcal{F})=\mathrm{H}^{i}(\vec{P},\varphi_{\ast}\mathcal{F})\).
\end{lemma}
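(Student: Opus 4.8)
The plan is to deduce the cohomology comparison from three ingredients: exactness of $\varphi_{\ast}$ (Lemma~\ref{lemma:exact-phi}), compatibility of $\varphi_{\ast}$ with global sections, and the flasqueness claim itself. Granting these, I would run the standard resolution argument: choose an injective resolution $\mathcal{F}\to\mathcal{I}^{\bullet}$ in the abelian sheaves on $(X/V)_{\text{MW}}|_P$. Since $\varphi_{\ast}$ is exact, $\varphi_{\ast}\mathcal{F}\to\varphi_{\ast}\mathcal{I}^{\bullet}$ is again a resolution, and by the flasqueness statement it is a resolution by $\Gamma(\vec{P},-)$-acyclic sheaves. Hence $\mathrm{H}^{i}(\vec{P},\varphi_{\ast}\mathcal{F})$ is computed by $\Gamma(\vec{P},\varphi_{\ast}\mathcal{I}^{\bullet})$, which by the global-sections compatibility equals $\Gamma((X/V)_{\text{MW}}|_P,\mathcal{I}^{\bullet})$, whose cohomology is $\mathrm{H}^{i}((X/V)_{\text{MW}}|_P,\mathcal{F})$ by definition. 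So the entire content is concentrated in the flasqueness assertion, together with a bookkeeping check on global sections.

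The global-sections compatibility is the identity $\Gamma(\vec{P},\varphi_{\ast}\mathcal{G})=\Gamma((X/V)_{\text{MW}}|_P,\mathcal{G})$. To see it, I would use that by~\ref{situation:same-sheaf} the terminal object of the relative topos is the representable sheaf $h_P=\mathrm{colim}_n h_{P_n}$, so that $\Gamma((X/V)_{\text{MW}}|_P,\mathcal{G})=\mathrm{Hom}(h_P,\mathcal{G})=\lim_n\mathcal{G}(P_n)$; on the other side, by the definition of $\varphi_{\ast}$ in~\ref{situation:relative-topos} and of global sections on $\vec{P}$ one has $\Gamma(\vec{P},\varphi_{\ast}\mathcal{G})=\lim_n\Gamma(P_n,\mathcal{G}|_{P_n})=\lim_n\mathcal{G}(P_n)$. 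The two sides agree.

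The heart of the matter, and the step I expect to be the main obstacle, is showing that $\varphi_{\ast}\mathcal{I}$ is flasque when $\mathcal{I}$ is injective. Here I would call a sheaf $(F_n)$ on $\vec{P}$ \emph{flasque} if each $F_n$ is a flasque Zariski sheaf on the space $P_n$ and the restriction map $F(W)\to F(U)$ along every morphism $U\to W$ of $\vec{P}$ is surjective; in particular, for $n\le m$, the transition maps $F(P_m)\to F(P_n)$ are surjective. Both properties follow from injectivity by a single device. For an inclusion of opens $U\subseteq W$ inside one $P_n$ the map of representable sheaves $h_U\to h_W$ is a monomorphism, and for $n\le m$ the transition $h_{P_n}\to h_{P_m}$ is a monomorphism by the injectivity of transition maps established in~\ref{situation:same-sheaf} (which itself rests on the density of Weierstraß domains). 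The free-abelian-sheaf functor carries these into monomorphisms $\mathbb{Z}h_U\to\mathbb{Z}h_W$ and $\mathbb{Z}h_{P_n}\to\mathbb{Z}h_{P_m}$, and since $\mathcal{I}$ is injective the functor $\mathrm{Hom}(-,\mathcal{I})$ is exact; using the identification $\mathrm{Hom}(\mathbb{Z}h_U,\mathcal{I})=\mathcal{I}(U)$ this turns the monomorphisms into the surjections $\mathcal{I}(W)\to\mathcal{I}(U)$ and $\mathcal{I}(P_m)\to\mathcal{I}(P_n)$, which are exactly the two flasqueness conditions.

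Finally I would verify that such flasque sheaves are $\Gamma(\vec{P},-)$-acyclic by dévissage to the levels. Because $\Gamma(\vec{P},(F_n))=\lim_n\Gamma(P_n,F_n)$, the cohomology on $\vec{P}$ fits into a Milnor-type exact sequence
\[
0\to R^{1}\!\lim_n \mathrm{H}^{q-1}(P_n,F_n)\to \mathrm{H}^{q}(\vec{P},(F_n))\to \lim_n\mathrm{H}^{q}(P_n,F_n)\to 0 .
\]
Levelwise flasqueness gives $\mathrm{H}^{q}(P_n,F_n)=0$ for $q>0$, and surjectivity of the transition maps on global sections gives the Mittag--Leffler condition, hence $R^{1}\lim_n\mathrm{H}^{0}(P_n,F_n)=0$; together these force $\mathrm{H}^{q}(\vec{P},(F_n))=0$ for $q>0$. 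The delicate interplay to watch is precisely that neither levelwise flasqueness nor surjectivity of transitions suffices alone: it is the monomorphy of $h_{P_n}\to h_{P_m}$ from~\ref{situation:same-sheaf} that supplies the surjectivity killing the $R^{1}\lim$, making the whole argument go through, exactly as in~\cite[Lemma~0.4.1]{ogus:convergent-topos}.
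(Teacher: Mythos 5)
Your proposal is correct and follows exactly the strategy the paper intends: the paper gives no argument of its own but defers to Ogus's Lemma~0.4.1, and your write-up (injectivity plus monomorphy of $h_{P_n}\to h_{P_m}$ from~\ref{situation:same-sheaf} giving levelwise flasqueness and surjective transition maps, then the $R^{1}\lim$ sequence for the tower $\vec{P}$) is a faithful reconstruction of that argument. No gaps.
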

\begin{situation}
\label{situation:how-to-compute-jp-direct-image}
Next we study the functor \(j_{P,\ast}\) and its derived functors.
In this paragraph we first discuss how to compute the direct image of \(j_P\).
Let \((T,i: Z \to T,z:Z \to X)\) be an absolutely affine enlargement. Let
\(\mathcal{G} =\varphi_{P}^{\ast} \mathrm{sp}_{\ast}\mathcal{F}\) for some coherent sheaf
\(\mathcal{F}\) on the weak formal scheme \(P\). Then by definition
\(j_{P\ast}(\mathcal{G}) = \mathcal{G}(P \times T)\). Recall, \(P\times T\) is
the widening whose weak formal scheme is \(P \times_V T\), which is again a weak
formal scheme; the central part is \(Z \times_{z,X} X = Z\). So we view \(Z\) as
a closed subscheme of \(P \times_V T\) via the product embedding \(z \times i\).

However, since \(P \times_V T\) is not an enlargement, we cannot ``evaluate'' a
sheaf on it. What we could do is to take the Hom group
\begin{equation*}
\mathrm{Hom}(P \times T, \mathcal{G})
\end{equation*}
of sheaves, i.e., we regard \(P \times_V T\) as a sheaf. On the other hand, we
know \(P \times_V T\) is the colimit of its universal enlargement, i.e., as
sheaves we have
\begin{equation*}
P \times T = \text{colim}_{n} (P \times T)_n.
\end{equation*}
As \((P \times T)_n\) is an enlargement lying over \(P\), we then conclude that
\begin{equation*}
\mathrm{Hom}(P \times T, \mathcal{G}) = \lim \mathcal{G}(P \times T)_n = \lim \mathcal{F}(P\times T)_{n,K} = \mathcal{F}((Z)_{P\times_V T}).
\end{equation*}
From this we conclude that the direct image functor \(j_{P\ast}\) is computed by
means of the \emph{value of the sheaf on the tubular neighborhood}.
\end{situation}

\begin{lemma}
\label{lemma:acyclic-jp-and-u}
Let \(\mathcal{F}\) be a coherent \(\mathcal{O}_{P}\)-sheaf.
Then \(\mathcal{G} = \varphi_P^{\ast}\mathcal{F}\) is \(j_{P}\)-acyclic, and
\(j_{P\ast}(\mathcal{G})\) is \(u_{X/V}\)-acyclic.
Cf.~\cite[0.4.3]{ogus:convergent-topos}.
\end{lemma}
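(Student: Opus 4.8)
The plan is to prove the two acyclicity statements in turn, deducing the second from the first. The three tools I will lean on are: the description of $j_{P\ast}$ by values on tubular neighborhoods from~\ref{situation:how-to-compute-jp-direct-image}; the flasqueness comparison of Lemma~\ref{lemma:ogus-flasque}, which converts cohomology on a relative Monsky--Washnitzer site into cohomology of the ind-system $\vec{P}$ of universal enlargements; and the vanishing of coherent cohomology on quasi-Stein tubes (``Theorem B'') packaged in Lemma~\ref{lemma:good-sheaf-gamma-acyclic}.

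First I would treat the $j_P$-acyclicity of $\mathcal{G}=\varphi_P^{\ast}\mathcal{F}$. Since the relative topos is the localization (slice) of $(X/V)_{\text{MW}}^{\sim}$ at the sheaf $h_P$, the theory of localization of topoi computes the higher direct image, as a sheaf, by its values on representables: writing $Q=P\times T$ for the product widening attached to an absolutely affine enlargement $T=(A,R)$, with central scheme $Z=\mathrm{Spec}(R)$,
\[
(R^q j_{P\ast}\mathcal{G})(T)=H^q\bigl((X/V)_{\text{MW}}|_{Q},\,\mathcal{G}\bigr).
\]
Applying the analogue of Lemma~\ref{lemma:ogus-flasque} for $Q$ rewrites this as $H^q(\vec{Q},\varphi_{\ast}\mathcal{G})$, where $\varphi_{\ast}\mathcal{G}$ is the good sheaf (Definition~\ref{definition:admissible}) attached to the coherent sheaf $\mathcal{F}$ on the tube $(Z)_{Q}$. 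Because the higher direct images of $\vec{\mathrm{sp}}$, $\mathrm{res}$ and $\gamma$ all vanish on good/coherent sheaves (Lemma~\ref{lemma:good-sheaf-gamma-acyclic} together with~\ref{situation:colim-topos}), the associated Grothendieck spectral sequences collapse and identify this group with $H^q((Z)_{Q},\mathcal{F})$, which is zero for $q>0$ by Theorem B. As $T$ runs over a generating family, this gives $R^q j_{P\ast}\mathcal{G}=0$.

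Next, for the $u_{X/V}$-acyclicity of $j_{P\ast}\mathcal{G}$: the sheaf $R^q u_{X/V,\ast}(j_{P\ast}\mathcal{G})$ is the sheafification of $U\mapsto H^q((U/V)_{\text{MW}},j_{P\ast}\mathcal{G}|_U)$, so I may work locally and assume $X$ affine, reducing to the vanishing of $H^q((X/V)_{\text{MW}},j_{P\ast}\mathcal{G})$ for $q>0$. By the first part and the Leray spectral sequence for $j_P$ this equals $H^q((X/V)_{\text{MW}}|_P,\mathcal{G})$; by Lemma~\ref{lemma:ogus-flasque} this is $H^q(\vec{P},\varphi_{\ast}\mathcal{G})$, and exactly as above the collapse of the spectral sequences identifies it with $H^q((X)_P,\mathcal{F})$, which vanishes by Theorem B since $(X)_P$ is the quasi-Stein tube of the affine $X$.

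The main obstacle, in both parts, is honestly reducing to the situation in which Bambozzi's Theorem B applies. Two points need care. First, one must check that the localization formula for $R^q j_{P\ast}$ and the comparison of Lemma~\ref{lemma:ogus-flasque} genuinely persist for the product widening $Q=P\times T$; this requires knowing $Q$ is again an admissible widening with the expected universal enlargements, which rests on the flatness of the weakly completed tensor product (Corollary~\ref{corollary:fiber-product-enlargement}). Second, and more seriously, the hypotheses of Theorem B demand that the relevant tube be a closed subspace of a product of dagger disks with open disks, which is transparent only when the ambient weak formal scheme is \emph{affine}; since $P$ is merely quasi-compact, I expect to reduce to that case by covering $P$ by affine opens and running a \v{C}ech argument, exploiting that $Z$ (respectively $X$) is affine so that the \v{C}ech complex for the induced admissible cover computes the cohomology while the positive-degree contributions are killed piece by piece. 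Organizing these reductions cleanly, rather than the formal collapse of the spectral sequences, is where the real work lies.
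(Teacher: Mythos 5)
Your proposal is correct in outline and reaches the conclusion through the same circle of ideas (the flasqueness comparison of Lemma~\ref{lemma:ogus-flasque} plus Bambozzi's Theorem~B), but it takes a genuinely different route on the first assertion. The paper does not compute the global cohomology of the tube $(Z)_{P\times T}$ at all: it identifies the Zariski sheaf $R^{m}j_{P\ast}\mathcal{G}_{T}$ with $R^{m}\rho_{\ast}\mathcal{F}_{K}$ for the projection $\rho:(Z)_{P\times T}\to T_{K}$, and then invokes the \emph{weak fibration theorem} (Lemma~\ref{lemma:weak-fibration-theorem}) to see that $\rho$ is locally on $T_K$ isomorphic to $D(0;1^{-})^{N}\times T_{K}\to T_{K}$, so that only Theorem~B for a relative open polydisk over an affinoid is needed. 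Because this is a statement about a sheaf on $T_K$, it is local from the start and never requires $P$ (hence $P\times T$) to be affine, nor any \v{C}ech patching. Your version instead evaluates the presheaf $T\mapsto H^{q}((X/V)_{\text{MW}}|_{P\times T},\mathcal{G})$ and applies Theorem~B to the whole tube $(Z)_{P\times T}$; this is valid when $P$ is affine, but for quasi-compact $P$ you must glue quasi-Stein pieces, and the positive-degree \v{C}ech contributions do not die ``piece by piece'' for free --- one has to control a $\varprojlim^{1}$/Mittag--Leffler issue for the inverse system of sections over the nested Weierstra{\ss} domains (this is exactly the content of the density statement, Lemma~\ref{lemma:density-inclusion}, in the quasi-Stein formalism). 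You correctly flag this as the real work, but be aware that the fibration theorem is the tool the paper uses to avoid it entirely, and your proposal never invokes it. For the second assertion your argument is essentially the paper's, rephrased through global sections and an explicit localization on $X$: the paper instead uses the derived-functor identity $Ru_{\ast}Rj_{P\ast}=R\gamma_{\ast}\circ\varphi_{\ast}$ together with the already sheaf-theoretic statement $R^{i}\gamma_{\ast}(\varphi_{\ast}\mathcal{G})=0$ of Lemma~\ref{lemma:good-sheaf-gamma-acyclic}, which again spares the affine reduction. Both routes buy the same theorem; the paper's is shorter and matches Ogus's argument, while yours is more explicit but front-loads the gluing difficulties onto Theorem~B.
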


\begin{proof}
We have a commutative diagram
\begin{equation*}
\xymatrix{
  (X/V)_{\text{MW}}|_{P}^{\sim} \ar[r]^{\varphi_{\ast}}  \ar@/^2pc/[rr]_{\varphi_P} \ar[d]_{j_P} & \vec{P}{}^{\sim}  \ar[d]_{\gamma} \ar[r]^{\tau} & P^{\sim} \\
  (X/V)_{\text{MW}}^{\sim} \ar[r]_u  & X^{\sim}  \ar[ru] &
}
\end{equation*}
of morphisms of topoi (except \(\varphi_{\ast})\). Let \(\mathcal{F}\) be a
coherent sheaf on \(P\). Then using definition one checks that
\(\tau^{\ast}(\mathcal{F} \otimes K) = \varphi_{\ast}\mathcal{G}\).
By ``Theorem B'', Lemma~\ref{lemma:good-sheaf-gamma-acyclic},
\(R^{i}\gamma_{\ast}(\tau^{\ast}(\mathcal{F} \otimes K)) = 0\)  for \(i > 0\).
Therefore we have
\begin{align*}
  Ru_{\ast} Rj_{P\ast} \mathcal{G} &= R\gamma_{\ast}R\varphi_{\ast}\mathcal{G} \\
                                   &= \gamma_{\ast}\varphi_{\ast}\mathcal{G} \\
                                   &= u_{\ast}j_{P\ast} \mathcal{G}.
\end{align*}
The acyclicity of \(j_{P}\mathcal{G}\) with respect to \(u_{X/V}\) then follows
immediately from the spectral sequence of composition of functors and the
acyclicity of \(\mathcal{G}\) with respect to \(j_{P}\).

Let us now prove the vanishing of \(R^{m}j_{P\ast}\mathcal{G}\) for \(m > 0\).
To this end, it suffices to prove that for any absolutely affine enlargement
\((T,Z,z)\), the Zariski sheaf \(R^{m}j_{P\ast}\mathcal{G}_{T}\) is zero.
Consider the projection
\begin{equation*}
\rho = \mathrm{pr}_2|_{(Z)_{P\times T}}:  (Z)_{P\times T} \to T_K.
\end{equation*}
Then the Zariski sheaf \(R^{m}j_{P\ast}\mathcal{G}_{T}\) equals
\begin{equation*}
R^{m}\rho_{\ast}\mathcal{F}_K.
\end{equation*}
This can be seen by chasing a diagram of product widenings.
The above sheaf vanishes for \(m > 0\) since the projection is locally isomorphic to
\(D(0;1^{-})^N \times T_K \to T_K\) as dagger spaces, and the open polydisk has
no cohomology by applying ``Theorem B'',
Lemma~\ref{lemma:good-sheaf-gamma-acyclic}.
\end{proof}
\begin{situation}
\label{situation:de-rham-complex}
Let \(\Omega^{i}_{P}\) be
\(\varphi_{P}^{\ast}\Omega_{P/V}^{i,\dagger}\)
(see~\ref{situation:relative-topos}),
i.e., the ring-theoretic pullback of the weakly completed de~Rham complex
of the weak formal scheme \(P\).

On the weak formal scheme \(P\), the exterior differentials
\(d: \Omega_{P/V}^{i,\dagger}\otimes K \to \Omega_{P/V}^{i+1,\dagger} \otimes K\)
are only \(K\)-linear, not \(\mathcal{O}_{P}\otimes K\)-linear.
Therefore the sheaves \(\Omega_P^{i}\) do not form a complex in an obvious
fashion. Our first mission is to define natural exterior differentials
\(d: \Omega_{P}^{i} \to \Omega_{P}^{i+1}\), and get a natural
complex \(\Omega_{P}^{\bullet}\) on the Monsky--Washnitzer site.

To this end, we need to define the sheaf relative differential forms for
a morphism of dagger spaces. This process occupies
\ref{definition:dagger-module} --- \ref{lemma:relative-differential-product}.
In this course, although not strictly needed, we use the notion of bornological
spaces to make the treatment conceptual, but in the end bornology does not
appear in our result. Bambozzi's
thesis~\cite{bambozzi:generalization-affinoid-varieties},  which contains a
complete index for terminologies and definitions, is our reference for
bornological spaces and bornological algebras.
\end{situation}

Let \(A\) be an affinoid dagger \(K\)-algebra. In the sequel, we fix a
presentation \(A = W_{m}/I\) where \(I \subset T_n(\rho_0)\).

\begin{definition}
\label{definition:dagger-module}
A \emph{dagger} \(A\)-module is a bornological \(A\)-module \(M\) such that there
exists a strict epimorphism \(A^{n} \to M\) for some finite cardinal \(n\).
\end{definition}

Here is an Artin-Rees type lemma for dagger modules asserting that any finitely
generated \(A\)-module has a canonical bornological module that is compatible
with submodules.

\begin{lemma}
\label{lemma:artin-rees-for-dagger}
If \(A\) is a \(K\)-dagger algebra, then each finitely generated \(A\)-module
admits a unique bornology making it into a dagger \(A\)-module. Moreover, with
respect to this canonical bornology, the category of finitely generated
\(A\)-modules is equivalent to the category of dagger \(A\)-modules.
\end{lemma}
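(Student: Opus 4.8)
The plan is to isolate \emph{automatic boundedness} of abstract $A$-linear maps as the one real engine, and then derive existence, uniqueness, and the equivalence of categories from it by purely formal arguments, reserving the genuinely analytic input for a single closedness (Artin--Rees) statement. First I would fix the candidate canonical bornology: given a finitely generated $A$-module $M$, choose a surjection of abstract $A$-modules $p\colon A^{n}\to M$ and equip $M$ with the quotient (final) bornology, where $A^{n}$ carries the product of the canonical bornology on $A$. By construction $p$ is a strict epimorphism, so $(M,\mathcal{B}_{p})$ is a candidate dagger $A$-module; the entire lemma then rests on showing this bornology is intrinsic.

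\emph{Key claim (automatic boundedness).} Every abstract $A$-linear map $f\colon M\to N$ of finitely generated $A$-modules is bounded for the quotient bornologies. To see this I would choose presentations $p\colon A^{n}\to M$ and $q\colon A^{m}\to N$; since $A^{n}$ is free and $q$ is surjective on underlying modules, lift $f\circ p$ to an $A$-linear map $\tilde{f}\colon A^{n}\to A^{m}$, necessarily given by an $m\times n$ matrix over $A$. As $A$ is a bornological algebra and both $A^{n},A^{m}$ carry product bornologies, $\tilde{f}$ is bounded; hence $f\circ p=q\circ\tilde{f}$ is bounded, and because $p$ is a strict epimorphism the universal property of the quotient bornology forces $f$ itself to be bounded.

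All the formal consequences now follow. Applying the Key claim to the identity map of $M$ through two different presentations shows that $\mathcal{B}_{p}$ is independent of $p$, so it genuinely deserves the name \emph{canonical bornology}. Uniqueness is then immediate: if $\mathcal{B}$ is any bornology making $M$ a dagger module, then by definition some $A^{n}\to(M,\mathcal{B})$ is a strict epimorphism, so $\mathcal{B}$ is a quotient bornology and hence equals the canonical one. For the equivalence of categories, the functor $M\mapsto(M,\text{canonical})$ is faithful trivially and full because, by the Key claim, every $A$-linear map is automatically bounded, so the two Hom-sets coincide; it is essentially surjective since every dagger $A$-module is by definition a finite strict quotient of some $A^{n}$, hence finitely generated and carrying precisely its canonical bornology, with the forgetful functor serving as quasi-inverse.

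The hard part will be the analytic input needed merely to know that the quotient construction yields a \emph{bona fide} (complete) dagger module and is compatible with submodules. Concretely I must show that the kernel $N=\ker(p)\subset A^{n}$ is a closed subobject, equivalently that the subspace bornology on any finitely generated submodule agrees with its own canonical bornology; this closedness is exactly the Artin--Rees content, and it is what guarantees $A^{n}\to M$ is a strict epimorphism in the category of complete bornological $A$-modules. Here the naive strategy of passing to the affinoid completion $\widehat{A}$ fails, since one loses control of the fringe radii --- the same obstruction already flagged in connection with Theorem~B. Instead I would use that $A$ is Noetherian (Fulton's theorem, invoked already in the proof of Lemma~\ref{lemma:weak-fibration-theorem}), so that $N$ is again finitely generated, and then appeal to the bornological open mapping and closed-submodule results in Bambozzi's framework~\cite{bambozzi:generalization-affinoid-varieties} to conclude that $N$ is closed and the inclusion is a strict monomorphism, which settles both existence and the compatibility with submodules.
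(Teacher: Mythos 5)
Your proposal is correct in substance, but it is worth noting that the paper does not actually give an argument here: its entire proof is the citation ``See Corollaries 6.15, 6.16 of F.~Bambozzi's thesis.'' What you have done is reconstruct the standard skeleton of that result (the bornological analogue of the classical fact for finite Banach modules over affinoid algebras): equip $M$ with the quotient bornology from a presentation $A^{n}\to M$; prove automatic boundedness of abstract $A$-linear maps by lifting through free presentations to a matrix over $A$ and invoking the universal property of the quotient bornology; and then deduce independence of the presentation, uniqueness of the bornology, and fullness, faithfulness, and essential surjectivity of the forgetful functor as purely formal corollaries. That skeleton is sound, and your identification of the one genuinely analytic ingredient --- that finitely generated submodules of $A^{n}$ are closed (equivalently, carry their canonical bornology as the subspace bornology), so that the quotient is a bona fide complete bornological module and $p$ is strict --- is exactly where the Artin--Rees/Noetherian content lives; you still defer this to Bambozzi, which is the same ultimate source the paper relies on. So the two ``proofs'' are compatible rather than divergent: the paper outsources everything, while you expose the formal mechanism and isolate the analytic crux. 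The one thing to keep honest is that your closedness step is not optional bookkeeping but the entire nontrivial content of the lemma in the bornological setting (without it the quotient bornology on $M$ need not be separated or complete, and ``strict epimorphism'' would fail), so if this lemma were to be proved rather than cited, that is the part that would need to be written out from the Noetherianity of $A$ and the open-mapping/strictness results in Bambozzi's framework.
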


\begin{proof}
See Corollaries 6.15, 6.16 of F.~Bambozzi's
thesis~\cite{bambozzi:generalization-affinoid-varieties}.
\end{proof}

\begin{situation}
\label{situation:relative-dagger-algebra}
Let \(B\) be a dagger \(A\)-algebra. This means that \(B\) is of the form
\(W_{m+n}/\{IW_{m+n} + J\}\). One particular example of \(B\) is
\(A\langle t_1,\ldots,t_n\rangle^{\dagger} = W_{m+n}/IW_{m+n}\).
Let us arrange such that \(I, J\) are all
convergent on the polydisk of radius \(\rho_0\). Thus we can present \(B\) as a
filtered colimit \(B = \mathrm{colim}(B_\rho)\) with injective transition maps.
Here \(B_{\rho} = \mathfrak{T}_{m+n}(\rho)/\{I\mathfrak{T}_{m+n}(\rho)+J\}\).
\(B\) is complete as a bornological \(k\)-space, as it is a colimit of complete
Banach spaces; but it may not be complete as a ``multiplicatively convex bornological
algebra'' in the sense of Bambozzi (which is defined as the direct limit of
Banach \(k\)-algebras with monomorphic transition maps), as it is could contain
torsion elements.

Let \(B \otimes_A B\) be the completed tensor product obtained as the colimit of
the completed tensor product \(B_{\rho} \otimes_{A_{\rho}} B_{\rho}\) with the
colimit bornology. Then we see \(B\), \(B \widehat{\otimes}_A B\) are all dagger algebras
over \(k\) (by writing down a presentation of \(B\)).
\end{situation}

\begin{situation}
\label{situation:bounded-differentials}
\textbf{Construction of \(\Omega_{B/A}^{1}\).}
Let \(M\) be a dagger \(B\)-module. A \emph{bounded} \(A\)-linear derivation is a
derivation \(D: B \to M\) that is also bounded with respect to the bornology of
\(B\) inherited from the presentation and the canonical bornology of \(M\).

Let \(M\) be a bornological \(B\)-module. Consider the algebra \(B \ast M\)
whose ambient \(B\)-module is
\begin{equation*}
B \oplus M
\end{equation*}
with multiplication \((b + m)(b' + m') = bb' + bm' + b'm\). Then \(B \oplus M\)
is a bornological \(B\)-module with the product bornology. The multiplication of
\(B \ast M\) is bounded, since the action of \(B\) on \(M\) is bounded.
The group of bounded \(A\)-derivations is identified with the subset of bounded
\(A\)-algebra homomorphism from \(B\) into \(B \ast M\) that reduces to the
identity modulo \(M\).

The multiplication defines a bounded map \(\mu: B \otimes_A B \to B\) of
bornological \(A\)-algebras. The maps \(B \to B \otimes_A B\) defined by
\(d_0(b) = b \otimes 1\) and \(d_1(b) = 1 \otimes b\) are bounded. Define
\(I = \mathrm{Ker}(\mu)\). Let \(B^{(i)} = (B \otimes_A B)/I^{i+1}\).
Clearly the bounded homomorphisms \(d_0\) and \(d_1\) are liftings of the
identity \(B = B^{(0)}\). Thus \(\mathrm{d} = d_1 - d_0\) is a derivation valued
in the ideal \(I\). Since \(I\) is finitely generated, \(\mathrm{d}\) is
necessarily bounded.

Given a bounded \(A\)-linear derivation \(D: B \to M\), where \(M\) is a
bornological \(B\)-algebra. We get two bounded maps \(B \to B \ast M\) given by
\(\mathrm{Id}_B + 0\) and \(\mathrm{Id}_B + D\). This gives a bounded
homomorphism, by the universal product of the completed tensor product
\begin{equation*}
B \widehat{\otimes}_A B \to B \ast M
\end{equation*}
Note that \(I\) is mapped to \(M\) since the composition
\begin{equation*}
B \widehat{\otimes}_A B \to B \ast M  \to B
\end{equation*}
is none other than the multiplication map.
Since \(M^2 = 0\), \(I^2\) is sent to zero, and we obtain a homomorphism of
\(B\)-modules \(I/I^2 \to M\). This proves that \(\mathrm{d}: B \to I/I^2\)
is the universal bounded derivation. Since we do not use usual Kähler
differentials for dagger algebras, we will denote \(I/I^2\) by
\(\Omega_{B/A}^{1}\) without ambiguity.
\end{situation}

\begin{lemma}
\label{lemma:differential-relative-disk}
Let \(A\) be a dagger \(k\)-algebra.
Let \(B = A \langle t_1, \ldots,t_n \rangle^{\dagger}\).
Then \(\Omega_{B/A}^{1} = \bigoplus_{i=1}^{n} B \cdot \mathrm{d}t_i\).
\end{lemma}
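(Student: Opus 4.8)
The plan is to compute $\Omega^1_{B/A} = I/I^2$ (with $I = \ker\mu$ the kernel ideal from~\ref{situation:bounded-differentials}) directly, by making the completed tensor product explicit and recognizing $I/I^2$ as the conormal module of a zero section. First I would identify the completed tensor product itself. Since $B = A\langle t_1,\ldots,t_n\rangle^{\dagger}$, the construction of $\widehat{\otimes}$ in~\ref{situation:relative-dagger-algebra}, carried out radius by radius on $B_\rho \widehat{\otimes}_{A_\rho} B_\rho$, yields a bornological isomorphism $B\widehat{\otimes}_A B \cong B\langle t_1',\ldots,t_n'\rangle^{\dagger}$ under $t_i \mapsto t_i\otimes 1$ and $t_i'\mapsto 1\otimes t_i$. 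I then perform the triangular change of coordinates $\tau_i := 1\otimes t_i - t_i\otimes 1$. Because each $t_i$ is power bounded in $B$, the elements $\tau_i + t_i$ are power bounded, so by the universal property of the Monsky--Washnitzer algebra (Lemma~\ref{lemma:universal-property-rational-mw-algebra}) the assignment $t_i'\mapsto \tau_i + t_i$ defines an automorphism of bornological algebras, giving $B\widehat{\otimes}_A B \cong B\langle \tau_1,\ldots,\tau_n\rangle^{\dagger}$. Under this identification $\mu$ becomes the evaluation $\tau_i\mapsto 0$, so $I = \ker\mu = (\tau_1,\ldots,\tau_n)$ and $(B\widehat{\otimes}_A B)/I \cong B$.

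The heart of the argument is then that $I/I^2$ is free on the classes of the $\tau_i$. For this I would observe that $\tau_1,\ldots,\tau_n$ is a regular sequence in $C := B\langle \tau_1,\ldots,\tau_n\rangle^{\dagger}$: successive quotients give $C/(\tau_1,\ldots,\tau_j) \cong B\langle \tau_{j+1},\ldots,\tau_n\rangle^{\dagger}$, in which the next coordinate $\tau_{j+1}$ is a nonzerodivisor. Since $C$ is Noetherian (Fulton's theorem) and $I$ is generated by a regular sequence, the standard Koszul computation shows $I/I^2$ is a free $C/I = B$-module with basis the images of $\tau_1,\ldots,\tau_n$. Finally, from $\mathrm{d}t_i = d_1(t_i) - d_0(t_i) = 1\otimes t_i - t_i\otimes 1 = \tau_i \bmod I^2$ we conclude $\Omega^1_{B/A} = I/I^2 = \bigoplus_{i=1}^{n} B\cdot\mathrm{d}t_i$.

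It remains to see this is an isomorphism of dagger $B$-modules, not merely of abstract modules. But $I/I^2$ is a finitely generated $B$-module (a quotient of the finitely generated ideal $I$), so by Lemma~\ref{lemma:artin-rees-for-dagger} it carries a unique dagger bornology, which must coincide with the canonical bornology of the finite free module $\bigoplus_i B\,\mathrm{d}t_i$; hence the algebraic isomorphism is automatically bornological.

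I expect the main obstacle to lie in the first step's bornological bookkeeping: justifying that $B\widehat{\otimes}_A B$ really is $B\langle\tau\rangle^{\dagger}$ as a bornological algebra, that the substitution is an isomorphism there, and that $I/I^2$ may legitimately be computed by ordinary commutative algebra (that is, that the conormal module is insensitive to the bornology). As an independent sanity check one can instead verify the universal property directly: the formal partials $\partial_{t_i}$ are norm-contracting on each $\mathfrak{T}_{m+n}(\rho)$ (since $|i|\le 1$ $p$-adically and $\rho\ge 1$), hence bounded, so $f\mapsto \sum_i \partial_{t_i}(f)\,\mathrm{d}t_i$ is a bounded $A$-derivation into $\bigoplus_i B\,\mathrm{d}t_i$, and any bounded $A$-derivation $D$ agrees with $\sum_i \partial_{t_i}(\cdot)\,D(t_i)$ on the dense subalgebra $A[t_1,\ldots,t_n]$, hence everywhere by boundedness.
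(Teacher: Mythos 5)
Your argument is correct, but it follows a genuinely different route from the paper's. The paper's proof never touches the tensor-product presentation of \(\Omega^1_{B/A}\): it works radius by radius, writing \(B_\rho = A_\rho\langle t/\rho\rangle\), invoking the standard fact that the completed K\"ahler differentials \(\Omega^1_{B_\rho/A_\rho}\) of these Banach algebras are free on the \(\mathrm{d}t_i\), and then observing that this compatible system of Banach modules is exactly the canonical bornology on the finitely generated \(B\)-module \(\Omega^1_{B/A}\) (via Lemma~\ref{lemma:artin-rees-for-dagger}). You instead compute \(I/I^2\) head-on: identify \(B\widehat{\otimes}_A B\) with \(B\langle\tau_1,\ldots,\tau_n\rangle^{\dagger}\) via \(\tau_i = 1\otimes t_i - t_i\otimes 1\), recognize \(I=\ker\mu\) as the ideal \((\tau_1,\ldots,\tau_n)\), and conclude by the regular-sequence/conormal computation in the Noetherian ring \(C\). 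Both are sound. The paper's route is shorter because it imports the affinoid (Banach) result wholesale and only has to match bornologies; your route is self-contained within the dagger category and makes the diagonal ideal explicit, at the cost of having to justify the identification of the completed tensor product and the surjectivity of \((\tau_1,\ldots,\tau_n)\twoheadrightarrow\ker\mu\) (both of which do hold, level by level in \(\rho\), since filtered colimits along injections preserve kernels). Your closing ``sanity check'' via \(\sum_i\partial_{t_i}(\cdot)\,\mathrm{d}t_i\) and density of \(A[t]\) in each \(B_\rho\) is in fact closest in spirit to what the paper does, since boundedness on each Banach level \(B_\rho\) is what lets one pass from the dense polynomial subalgebra to all of \(B\). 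The one point worth flagging is that your step ``\(I/I^2\) may be computed by ordinary commutative algebra'' is doing real work: it is legitimate here precisely because \(I\) is finitely generated in a Noetherian dagger algebra, so Lemma~\ref{lemma:artin-rees-for-dagger} forces the bornology on \(I/I^2\) to be the canonical one, exactly as you say.
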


\begin{proof}
For any \(1 < \rho < \rho_0\), define
\(B_{\rho} = A_{\rho}\langle t/\rho\rangle\).
The maps \(d_{i,\rho} : B_{\rho}\to B_{\rho}^{(1)}\) are defined and when
\(\rho < \rho'\), we have \(d_{i,\rho'}|_{B_{\rho}} = d_{i,\rho}\). This means
that the \(A_{\rho}\)-modules \(\Omega_{B_{\rho}/A_{\rho}}^1\), the module of
completed Kähler differential for \(B_{\rho}/A_{\rho}\) form a bornology
of \(\Omega_{B/A}^{1}\). Clearly this agrees with the canonical bornology as
\(\Omega_{B/A}^{1}\) is finitely generated over \(B\) by definition. The
assertion now follows from the standard result for completed differentials, as
the \(\mathrm{d}t_i\) form a basis all \(\Omega^1_{B_{\rho}/A_{\rho}}\)
\end{proof}

The module completed differentials clearly sheafifies (as it is defined by
universal property). For a dagger space \(X\) over \(\mathrm{Sp}(A)\), we can
thus define \(\Omega_{X/A}^{1}\). One can then develop the theory of smooth
and étale morphism of dagger spaces. But for our purpose we only need to know
the following two simple facts.

\begin{lemma}
\label{lemma:relative-open-disk}
Let \(A\) be an affinoid dagger \(K\)-algebra.
Let \(X = \bigcup_{0 <\eta < 1}\mathrm{Sp}(A\langle t_1/\eta, \ldots, t_n/\eta \rangle^{\dagger})\).
Then \(\Omega^{1}_{X/A}\) is the free \(\mathcal{O}_{X}\)-module generated by
\(\mathrm{d}t_i\).
\end{lemma}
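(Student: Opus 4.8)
The plan is to reduce everything to the closed-disk computation of Lemma~\ref{lemma:differential-relative-disk}, using the admissible covering of $X$ by closed dagger polydisks together with the fact that relative differentials commute with admissible open immersions. First I would record the geometric description: by construction $X$ is the open unit polydisk over $\mathrm{Sp}(A)$, presented as the nested admissible union of the affinoid closed dagger polydisks
\[
X_\eta = \mathrm{Sp}(B_\eta), \qquad B_\eta = A\langle t_1/\eta,\ldots,t_n/\eta\rangle^{\dagger},
\]
taken over $0 < \eta < 1$ with $\eta \in |K^{\times}|\otimes\mathbb{Q}$. For $\eta' \leq \eta$ the inclusion $X_{\eta'}\hookrightarrow X_\eta$ is an open immersion realizing $X_{\eta'}$ as a Weierstraß subdomain (cf.~Example~\ref{example:affine-space-product-tube}), and these form an admissible covering. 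Thus a coherent $\mathcal{O}_X$-module is the same datum as a compatible system of coherent $\mathcal{O}_{X_\eta}$-modules.

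Next I would compute the relative differentials on each piece. Since the weakly completed relative differentials are defined by the universal property of the universal bounded derivation (see~\ref{situation:bounded-differentials}), the formation of $\Omega^1$ is compatible with the open immersions $X_{\eta'}\hookrightarrow X_\eta$; in particular the restriction of $\Omega^1_{X/A}$ to $X_\eta$ is canonically $\Omega^1_{X_\eta/A}$. Applying Lemma~\ref{lemma:differential-relative-disk} to $B_\eta$---rescaling the coordinates to $s_i = t_i/\eta$, which only alters the proposed basis by the unit $\eta^{-1}$ since $\mathrm{d}s_i = \eta^{-1}\mathrm{d}t_i$---gives
\[
\Omega^1_{X_\eta/A} = \bigoplus_{i=1}^{n} \mathcal{O}_{X_\eta}\,\mathrm{d}t_i,
\]
free on $\mathrm{d}t_1,\ldots,\mathrm{d}t_n$.

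Finally I would glue. The chosen generators $\mathrm{d}t_i$ are manifestly compatible under restriction (the restriction of $\mathrm{d}t_i$ from $X_\eta$ to $X_{\eta'}$ is again $\mathrm{d}t_i$), so the free-module structures on the pieces assemble into a free $\mathcal{O}_X$-module structure on $\Omega^1_{X/A}$ with basis $\mathrm{d}t_1,\ldots,\mathrm{d}t_n$, proving the lemma. The only genuine point to verify---and the step I expect to require the most care---is the compatibility of $\Omega^1$ with the admissible open immersions $X_{\eta'}\hookrightarrow X_\eta$, i.e.\ that forming the universal bounded derivation commutes with passage to a Weierstraß subdomain. This holds because these immersions are \'etale and $\Omega^1$ is characterized by a universal property; everything else is the formal bookkeeping of gluing a coherent sheaf along an admissible nested cover.
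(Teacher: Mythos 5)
Your proposal is correct and follows essentially the same route as the paper's (one-line) proof: cover $X$ by the closed dagger polydisks $\mathrm{Sp}(A\langle t/\eta\rangle^{\dagger})$, apply Lemma~\ref{lemma:differential-relative-disk} on each piece, and observe that the bases $\mathrm{d}t_i$ are compatible under restriction. The additional care you take with the rescaling $s_i = t_i/\eta$ and with the compatibility of the universal bounded derivation under passage to Weierstraß subdomains is exactly the bookkeeping the paper leaves implicit.
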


\begin{proof}
The differential forms \(\mathrm{d}t_i\) are a compatible system of
bases of the sheaf of relative differentials on the admissible covering
\(\mathrm{Sp}(A\langle t/\eta\rangle^{\dagger})\).
\end{proof}

\begin{lemma}
\label{lemma:relative-differential-product}
Let \(A\) be an affinoid dagger algebra.
Let \(X\) be a dagger space.
Form the product \(X_A = X \times \mathrm{Sp}(A)\).
Then \(\Omega^{1}_{X_A/A} = \mathrm{pr}_1^{\ast}\Omega^1_{A/K}\).
\end{lemma}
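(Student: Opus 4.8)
The statement is the base-change compatibility of the module of differentials: the plan is to show that the first projection $\mathrm{pr}_1\colon X_A \to X$ induces an isomorphism $\Omega^{1}_{X_A/A}\cong \mathrm{pr}_1^{\ast}\Omega^{1}_{X/K}$. Since $\Omega^1$ is defined by a universal property and sheafifies (as observed after Lemma~\ref{lemma:differential-relative-disk}), and since the product $X_A$ is formed locally over $X$, the assertion is local on $X$; I would first reduce to the case where $X=\mathrm{Sp}(B)$ is affinoid with $B$ a dagger $K$-algebra. Then $X_A=\mathrm{Sp}(C)$ with $C=B\widehat{\otimes}_K A$ the dagger (weakly completed) tensor product, and $\mathrm{pr}_1^{\ast}\Omega^{1}_{X/K}$ is the coherent sheaf attached to the finitely generated $C$-module $C\otimes_B\Omega^{1}_{B/K}$. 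Thus it suffices to produce a natural, $C$-linear, bornological isomorphism $\Omega^{1}_{C/A}\cong C\otimes_B\Omega^{1}_{B/K}$.

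To compute $\Omega^{1}_{C/A}$, I would descend to the fringe level in the style of Lemma~\ref{lemma:differential-relative-disk}. Following~\ref{situation:relative-dagger-algebra}, write $B=\mathrm{colim}_{\rho}B_{\rho}$ and $A=\mathrm{colim}_{\rho}A_{\rho}$ as filtered colimits of $K$-affinoid Banach algebras with injective transition maps, and set $C_{\rho}=B_{\rho}\widehat{\otimes}_K A_{\rho}$, so that $C=\mathrm{colim}_{\rho}C_{\rho}$. At each finite level, $C_{\rho}$ is the base change of $B_{\rho}$ along the flat map $K\to A_{\rho}$ (flat since $K$ is a field), and the classical base-change formula for the completed differentials of affinoid Banach algebras yields a natural isomorphism $\Omega^{1}_{C_{\rho}/A_{\rho}}\cong C_{\rho}\otimes_{B_{\rho}}\Omega^{1}_{B_{\rho}/K}$. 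Concretely, this comes from the identification of multiplication kernels: one has $C_{\rho}\widehat{\otimes}_{A_{\rho}}C_{\rho}\cong (B_{\rho}\widehat{\otimes}_K B_{\rho})\widehat{\otimes}_K A_{\rho}$, under which the multiplication of $C_{\rho}$ corresponds to $\mu_{B_{\rho}}\widehat{\otimes}\,\mathrm{id}_{A_{\rho}}$, whence the kernels satisfy $I_{C_{\rho}}\cong I_{B_{\rho}}\widehat{\otimes}_K A_{\rho}$ and therefore $I_{C_{\rho}}/I_{C_{\rho}}^{2}\cong (I_{B_{\rho}}/I_{B_{\rho}}^{2})\widehat{\otimes}_K A_{\rho}=\Omega^{1}_{B_{\rho}/K}\widehat{\otimes}_K A_{\rho}$, the last module being $C_{\rho}\otimes_{B_{\rho}}\Omega^{1}_{B_{\rho}/K}$ as it is finite over $B_{\rho}$. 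The point requiring care here is the compatibility of these levelwise isomorphisms with the (injective) transition maps as $\rho\to 1^{+}$, which I expect to be the main technical obstacle; it follows by naturality of the conormal presentation but must be checked to license passage to the colimit.

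Finally, I would pass to the filtered colimit. As in the proof of Lemma~\ref{lemma:differential-relative-disk}, the canonical dagger bornology on $\Omega^{1}_{B/K}$ (resp.\ on $\Omega^{1}_{C/A}$) is the one induced by the finitely generated modules $\Omega^{1}_{B_{\rho}/K}$ (resp.\ $\Omega^{1}_{C_{\rho}/A_{\rho}}$), and filtered colimits commute with tensor products, so
\[
\Omega^{1}_{C/A} = \mathrm{colim}_{\rho}\,\Omega^{1}_{C_{\rho}/A_{\rho}} \cong \mathrm{colim}_{\rho}\,(C_{\rho}\otimes_{B_{\rho}}\Omega^{1}_{B_{\rho}/K}) \cong C\otimes_{B}\Omega^{1}_{B/K}.
\]
Both sides are finitely generated $C$-modules, so by Lemma~\ref{lemma:artin-rees-for-dagger} they carry a \emph{unique} dagger bornology; consequently the above isomorphism is automatically bornological, and the bornological bookkeeping that would otherwise be delicate is entirely absorbed into that uniqueness statement. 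The isomorphism is $C$-linear and natural in the presentation, hence it glues over an affinoid admissible covering of $X$ to the desired isomorphism of sheaves $\Omega^{1}_{X_A/A}\cong \mathrm{pr}_1^{\ast}\Omega^{1}_{X/K}$, completing the proof.
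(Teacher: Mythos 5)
Your proof is correct, but it takes a genuinely different route from the paper's. The paper argues directly from the universal property: after reducing to \(X = \mathrm{Sp}(B)\) affinoid, it observes that a bounded \(A\)-linear derivation of \(C = B\widehat{\otimes}_K A\) into a bornological \(C\)-module \(M\) restricts to a bounded \(K\)-linear derivation of \(B\) into \(M\), hence corresponds to a bounded \(B\)-linear map \(\Omega^1_{B/K}\to M\) and finally to a \(C\)-linear map \(C\otimes_B\Omega^1_{B/K}\to M\); representability then gives the isomorphism in one stroke. You instead compute the conormal module \(I/I^2\) at the fringe level, using the identification \(C_\rho\widehat{\otimes}_{A_\rho}C_\rho\cong(B_\rho\widehat{\otimes}_K B_\rho)\widehat{\otimes}_K A_\rho\) and classical affinoid base change, and then pass to the filtered colimit, absorbing the bornological bookkeeping into the uniqueness statement of Lemma~\ref{lemma:artin-rees-for-dagger}. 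Your approach is longer but makes explicit the one point the paper's proof leaves implicit, namely why a bounded \(K\)-derivation of \(B\) extends uniquely across the completed tensor product: the levelwise splitting of the multiplication map \(\mu_{B_\rho}\) by \(b\mapsto b\otimes 1\) is what legitimizes your kernel identification \(I_{C_\rho}\cong I_{B_\rho}\widehat{\otimes}_K A_\rho\), and the transition-map compatibility you flag as the main obstacle is indeed settled by naturality of the conormal presentation. Both arguments silently correct the same typo in the statement (\(\mathrm{pr}_1^{\ast}\Omega^1_{A/K}\) should read \(\mathrm{pr}_1^{\ast}\Omega^1_{X/K}\), as the paper's own proof and its later use in~\ref{situation:construction-differential} confirm).
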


\begin{proof}
We can assume \(X = \mathrm{Sp}(B)\) is affinoid.
Let \(M\) be a bornological
\(A \widehat{\otimes}_K B\)-module. Then it could be viewed as a bornological
\(B\)-module in a natural way. Any \(A\)-linear bounded derivation from the
tensor product into \(M\) gives rise to a bounded \(K\)-linear derivation from
\(B\) into \(M\). Thereby we get a bounded \(B\)-linear homomorphism
\(\Omega_{B/K}^{1} \to M\) that lifts to a bounded \(A\)-linear homomorphism
\(A \otimes_K \Omega_{B/A}^1 \to M\), in a unique fashion.
\end{proof}
\begin{situation}
\label{situation:relative-de-rham-complex}
\textbf{Definition.}
Let \(A\) be an affinoid dagger algebra. Let \(f: X \to \mathrm{Sp}(A)\) be a
morphism of dagger spaces. Then the universal \(A\)-linear bounded derivation
\(\mathrm{d}: \mathcal{O}_{X} \to \Omega_{X/A}^{1}\) prolongs to a complex
\(\Omega_{X/A}^{\bullet}\), whose arrows \(\mathrm{d}\) are \(A\)-linear, and
\(\Omega_{X/A}^{i} = \bigwedge^{i}\Omega_{X/A}^{i}\). This complex is called the
\emph{relative (bounded) de~Rham complex}.
\end{situation}

\begin{situation}
\label{situation:construction-differential}
\textbf{Construction.}
Let \((T,i:Z \to T,z: Z \to X)\) be an absolutely affine enlargement.
Let \(T_K = \mathrm{Sp}(A)\) be the generic fiber of \(T\). Since \(T\) is an
enlargement, we have \((Z)_T = T_K\). We form the product
\begin{equation*}
(T \times_V P, Z \xrightarrow{i,\mathrm{z}} T \times_V P, z: Z \to X)
\end{equation*}
of widenings. Then according
to~\ref{situation:how-to-compute-jp-direct-image},
\(j_{P\ast}\Omega_{P}^{i}(T) = \mathrm{pr}_2^{\ast}\Omega_{P_K}^{i}((Z)_{T \times_V P})\)
where
\begin{equation*}
\mathrm{pr}_2: T_K \times P_K \to P_K
\end{equation*}
is the projection to the second factor. Since \((Z)_{T\times_V P}\) is an open
subspace of \(T_K \times P_K\), we have, by
Lemma~\ref{lemma:relative-differential-product}, that
\begin{equation*}
j_{P\ast}\Omega_{P}^{i}(T) = \Omega_{(Z)_{T\times_V P}/T_K}^{i}((Z)_{T\times_{V}P}).
\end{equation*}
We then can use the exterior differentials,
Definition~\ref{situation:relative-de-rham-complex}, to define
\begin{equation*}
\mathrm{d}: j_{P\ast}\Omega_P^{i}(T) \to j_{P\ast}\Omega_P^{i+1}(T).
\end{equation*}
Thereby we get a complex whose entries are \(j_{P\ast}\Omega_{P}^{i}\), with
differentials defined above. We denote this complex by
\(\mathrm{dR}_{P}^{\bullet}\).
\end{situation}

\begin{lemma}
\label{lemma:poincarelemma}
The natural map
\begin{equation*}
\mathcal{O}_{X/V}^{\textup{an}} \to \mathrm{dR}^{\bullet}_{P}
\end{equation*}
is a quasi-isomorphism of sheaves on the Monsky--Washnitzer topos.
\end{lemma}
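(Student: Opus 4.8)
The plan is to prove the statement objectwise: since the objects of the site $\mathrm{Enl}(X/V)$ are exactly the absolutely affine enlargements (Definition~\ref{definition:site}) and sheafification is exact, a morphism of complexes of sheaves that induces an isomorphism on the cohomology \emph{presheaves} is automatically a quasi-isomorphism. So it suffices to fix an absolutely affine enlargement $T=(A\to R)$ and show that the induced map on sections over $T$ is a quasi-isomorphism of complexes of $K$-vector spaces. I would first identify both sides. By Example~\ref{example:sheaves-on-mw}(ii), the value $\mathcal{O}_{X/V}^{\mathrm{an}}(T)$ is $A[1/\varpi]=\mathcal{O}(T_K)$. Writing $Y=(Z)_{T\times_V P}$ with projection $\rho\colon Y\to T_K$, the computation of $j_{P\ast}$ recalled in~\ref{situation:construction-differential} (which rests on Lemma~\ref{lemma:relative-differential-product}) gives $\mathrm{dR}_P^i(T)=j_{P\ast}\Omega_P^i(T)=\Gamma(Y,\Omega^i_{Y/T_K})$, with the relative de~Rham differential. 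Thus on $T$ the map is the augmentation
\[
\mathcal{O}(T_K)\longrightarrow \Gamma\bigl(Y,\Omega^\bullet_{Y/T_K}\bigr)
\]
of the complex of global sections of the relative de~Rham complex of the tube over $T_K$.

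The key geometric input will be the weak fibration theorem, Lemma~\ref{lemma:weak-fibration-theorem}, applied to $\mathrm{pr}_1\colon T\times_V P\to T$: as $P$ is smooth of some relative dimension $n$ and $T$ is an enlargement (so $(Z)_T=T_K$), there is an admissible covering $\{V_\alpha\}$ of $T_K$ over which $\rho$ becomes the trivial projection $V_\alpha\times D(0;1^-)^n\to V_\alpha$. On each such piece $\Omega^\bullet_{Y/T_K}$ is the de~Rham complex of the open polydisk with coefficients in $\mathcal{O}(V_\alpha)$, and the relative Poincaré lemma for the open unit polydisk yields $R\rho_\ast\Omega^\bullet_{Y/T_K}|_{V_\alpha}\simeq\mathcal{O}_{V_\alpha}$; these glue to $R\rho_\ast\Omega^\bullet_{Y/T_K}\simeq\mathcal{O}_{T_K}$. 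I would then pass to global sections on $T_K$. Because $Y$ is quasi-Stein and each $\Omega^i_{Y/T_K}$ is coherent, ``Theorem B'' (as in the proof of Lemma~\ref{lemma:good-sheaf-gamma-acyclic}, the tube $Y$ being a closed subspace of a product of a dagger closed polydisk and open disks) shows the $\Omega^i_{Y/T_K}$ are $\Gamma(Y,-)$-acyclic, so the complex of global sections computes the hypercohomology:
\[
\Gamma\bigl(Y,\Omega^\bullet_{Y/T_K}\bigr)\simeq R\Gamma\bigl(Y,\Omega^\bullet_{Y/T_K}\bigr)\simeq R\Gamma\bigl(T_K,R\rho_\ast\Omega^\bullet_{Y/T_K}\bigr)\simeq R\Gamma(T_K,\mathcal{O}_{T_K}).
\]
Since $T_K$ is an affinoid dagger space, coherent cohomology vanishes in positive degrees, so the right-hand side is $\mathcal{O}(T_K)$ concentrated in degree $0$; tracing the identifications shows the augmentation is an isomorphism onto $\mathrm{H}^0$ and kills all higher cohomology. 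This is precisely the section-wise quasi-isomorphism, so the proof will be complete.

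The main obstacle is the relative Poincaré lemma for the open unit polydisk, i.e.\ exactness of the de~Rham complex of $D(0;1^-)^n$ in positive degrees. The delicate point is the convergence of primitives: formally integrating a monomial $t^i$ to $t^{i+1}/(i+1)$ can enlarge coefficients $p$-adically, and one must use the estimate $|i+1|^{-1}\le i+1$ to check that the primitive of a function convergent on every closed subdisk stays convergent on every closed subdisk. Here it is convenient that the open disk is partially proper, so by Example~\ref{example:affine-space-product-tube} its dagger and rigid structures coincide and no separate bookkeeping of overconvergence is needed; the relative statement over the affinoid base $V_\alpha$ then follows by running the same estimate with coefficients in $\mathcal{O}(V_\alpha)$.
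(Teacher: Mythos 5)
Your proposal is correct and follows essentially the same route as the paper: check the map on sections over an enlargement \(T\), use the weak fibration theorem to trivialize the tube over \(T_K\) as a product with the open unit polydisk, and conclude by \(p\)-adic integration with the radius-shrinking estimate \(|i+1|^{-1}\le i+1\). The only difference is one of packaging: you spell out the local-to-global step (Theorem B on the quasi-Stein tube together with the Leray spectral sequence for \(\rho\)) that the paper compresses into the assertion that, by the weak fibration theorem and induction on relative dimension, it suffices to treat \(\mathrm{Sp}(A)\times D(0;1^{-})\), where it then verifies surjectivity of \(\partial/\partial t\) directly on the pro-system \(\{R_\eta\}\).
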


\begin{proof}
By the weak fibration theorem, Lemma~\ref{lemma:weak-fibration-theorem}, and induction on relative dimension,
it suffices to prove that the relative
de~Rham cohomology of \(\mathrm{Sp}(A) \times D(0;1^{-})\) is
quasi-isomorphic to \(A\). This is a two term complex
\begin{equation*}
\lim\nolimits_{0<\eta<1} \mathrm{colim}_{1<\rho<r}\; A_{\rho}\langle t/\eta \rangle^{\dagger} \xrightarrow{\partial/\partial t}
\lim\nolimits_{0<\eta<1} \mathrm{colim}_{1<\rho<r}\; A_{\rho}\langle t/\eta \rangle^{\dagger}.
\end{equation*}
Set \(R_{\eta} = \mathrm{colim}_{1<\rho<r}\; A_{\rho}\langle t/\eta \rangle^{\dagger}\).
Pick an element \(f \in R_{\eta}\).
Then there exists \(\rho\), such that \(f \in A_{\rho}\langle t/\eta \rangle\)
(without dagger), standard integration gives rise to an element in
\(g \in A_{\rho}\langle t/\eta'\rangle\), where \(\eta'\) is a number slightly
bigger than \(\eta\), such that \(\partial g/\partial t=f\).
This means that the morphism \(\partial/\partial_t\) on pro-object
\(\{R_{\eta}\}\) is surjective. The kernel equals \(A\) since we can embed the
map in the ring of formal power series \(A[\![t]\!]\) and check the kernel
there.
\end{proof}

\begin{lemma}
\label{lemma:zariski-image-de-rham-complex}
We have an isomorphism of complexes of Zariski sheaves
\begin{equation*}
u_{X/V,\ast}(\mathrm{dR}_{P}^{\bullet}) \cong \mathrm{sp}_{\ast}\Omega_{(Z)_{P}}^{\bullet}.
\end{equation*}
\end{lemma}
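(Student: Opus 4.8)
The plan is to verify the stated isomorphism first at the level of individual terms, as an isomorphism of Zariski sheaves on $X$, and then to check that it is compatible with the de~Rham differentials. The assertion is local on $X$, so throughout I would assume $X$ and $P$ affine, and (since here $Z=X$) I would write $(X)_P$ for the tube. Because each $j_{P\ast}\Omega_P^i$ is $u_{X/V}$-acyclic by Lemma~\ref{lemma:acyclic-jp-and-u}, applying $u_{X/V,\ast}$ term by term to the complex $\mathrm{dR}_P^{\bullet}$ already computes the correct object, so it suffices to treat one term at a time together with the induced differential.

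For the term-wise identification I would use the equality $u_{X/V,\ast}j_{P\ast}\Omega_P^i=\gamma_{\ast}\varphi_{\ast}\Omega_P^i$ furnished by Lemma~\ref{lemma:acyclic-jp-and-u}. Unwinding the definition of $\varphi_P^{\ast}$ in~\ref{situation:relative-topos}, the sheaf $\varphi_{\ast}\Omega_P^i$ on $\vec P$ takes, on an open $U_n\subseteq P_n$, the value $\Omega_P^i(U_n)=\Omega_{P_K}^i(U_{n,K})$, namely the restriction of the absolute differentials of $P_K$ to the Weierstra\ss{} domain $U_{n,K}$. Since $\Omega_{(X)_P}^i$ is the restriction of $\Omega_{P_K}^i$ to the open tube $(X)_P$ and each $U_{n,K}$ lies in $(X)_P$, this is precisely the value of $\vec{\mathrm{sp}}_{\ast}\mathrm{res}_{\ast}\Omega_{(X)_P}^i$; hence $\varphi_{\ast}\Omega_P^i=\vec{\mathrm{sp}}_{\ast}\mathrm{res}_{\ast}\Omega_{(X)_P}^i$ as sheaves on $\vec P$. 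Applying $\gamma_{\ast}$ and invoking the factorization $\mathrm{sp}_{\ast}=\gamma_{\ast}\vec{\mathrm{sp}}_{\ast}\mathrm{res}_{\ast}$ on good (resp.\ coherent) sheaves from~\ref{situation:colim-topos} and Lemma~\ref{lemma:good-sheaf-gamma-acyclic}, I obtain $u_{X/V,\ast}j_{P\ast}\Omega_P^i=\gamma_{\ast}\varphi_{\ast}\Omega_P^i=\mathrm{sp}_{\ast}\Omega_{(X)_P}^i$, the desired term-wise isomorphism.

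It remains to match the differentials, which I expect to be the main obstacle. By construction (Situation~\ref{situation:construction-differential}) the differential of $\mathrm{dR}_P^{\bullet}$ on an enlargement $T$ is the relative de~Rham differential of $(Z)_{T\times_V P}$ over $T_K$, under the identification $j_{P\ast}\Omega_P^i(T)=\Omega^i_{(Z)_{T\times_V P}/T_K}((Z)_{T\times_V P})$. The decisive input is Lemma~\ref{lemma:relative-differential-product}: the relative de~Rham complex $\Omega^{\bullet}_{(Z)_{T\times_V P}/T_K}$ is the pullback along the projection to $P_K$ of the absolute de~Rham complex of $P_K$, restricted to the open tube. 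Consequently the directions coming from the base $T_K$ contribute nothing, and the differential is simply the pullback of the absolute differential $\mathrm{d}_{P_K}$. Since the term-wise identification above presents the sections precisely as forms pulled back from $P_K$, this differential is carried to $\mathrm{d}_{P_K}$ restricted to $(X)_P$; as $(X)_P$ is open in $P_K$, this is exactly the differential of $\Omega^{\bullet}_{(X)_P}$. Compatibility with $\gamma_{\ast}$, i.e.\ with the inverse limit over $n$, is automatic because the restriction maps among the $U_{n,K}$ commute with $\mathrm{d}$. Thus $u_{X/V,\ast}\mathrm{dR}_P^{\bullet}\cong\mathrm{sp}_{\ast}\Omega^{\bullet}_{(X)_P}$ as complexes, the only genuinely delicate point being the verification, via Lemma~\ref{lemma:relative-differential-product}, that the varying relative differential over $T_K$ induces the absolute differential on the tube.
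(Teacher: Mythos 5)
Your proposal is correct and follows essentially the same route as the paper: identify the two complexes term by term (the paper does this by observing that $u_{X/V,\ast}$ and $j_{P\ast}$ preserve global sections, you by unwinding the factorization $u_{X/V,\ast}j_{P\ast}=\gamma_{\ast}\varphi_{\ast}$ over the universal enlargements — the same computation), and then match the differentials via Lemma~\ref{lemma:relative-differential-product}, which identifies the relative de~Rham differential of $(Z)_{T\times_V P}$ over $T_K$ with the pullback of the absolute differential on $P_K$. The only cosmetic difference is that the paper packages the second step as a functorial chain map $\Gamma(X,\mathrm{sp}_{\ast}\Omega^{\bullet}_{(X)_P})\to\Gamma(X,u_{\ast}\mathrm{dR}_P)$ that is checked to be an isomorphism on entries, whereas you verify directly that your term-wise identification intertwines the differentials; these are the same argument.
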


\begin{proof}
Since \(u_{X/V,\ast}\) and \(j_{P\ast}\) preserves global sections, we have
\begin{equation*}
\Gamma(U, u_{X/V,\ast}j_{P\ast}\Omega_{P}^{i}) = \Omega^{i}_{(Z)_P}((Z)_P) =
\Gamma(U, \mathrm{sp}_{\ast}\Omega^{i}_{(Z)_P})
\end{equation*}
for any Zariski open subset of \(X\). Thus, the entries of the two complexes are
equal. It remains to identify differentials. Let \((T,Z,z)\) be an enlargement.
The morphism \(T \times P \to P\) is visualized by the morphism
Consider the morphism
\begin{equation*}
\xymatrix{
  (Z)_{P\times T} \ar[r] \ar[d] & (X)_{P} \ar[d] \\
  P_K \times T_K \ar[r]       & P_K
}
\end{equation*}
of dagger spaces. By Lemma~\ref{lemma:relative-differential-product}, the
global sections of the relative de~Rham complex on \((Z)_{P\times T}\) and
the global section of the absolute de~Rham complex on \((X)_P\) has a
natural chain map, and is functorial in \(T\). Thus we get a natural morphism
of chain complexes
\begin{equation*}
\Gamma(X,\mathrm{sp}_{\ast}\Omega_{(X)_P}^{\bullet}) \to \Gamma(X, u_{\ast}\mathrm{dR}_P).
\end{equation*}
which is isomorphism on entries. The lemma follows since we can replace \(X\) by
its Zariski opens.
\end{proof}

\begin{corollary}
\label{corollary:comparison}
Let \(X\) be a \(k\)-variety. Let \(P\) be an admissible weak formal scheme over
\(V\) whose completion is smooth. Assume that \(P \otimes_V k\) contains \(X\)
as a closed subscheme. Then the analytic cohomology of \(X\) with respect to
\(P\) is isomorphic to the Monsky--Washnitzer cohomology of \(X\).
\end{corollary}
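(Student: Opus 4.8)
The plan is to deduce the corollary by assembling the lemmas of this section into the chain of identifications sketched in the overview; there is essentially no new computation, only careful application of Leray spectral sequences and the acyclicity results proved above. By~\ref{situation:notation-frame} the widening is $(P,X,\mathrm{Id})$, so the vanishing scheme is $Z=X$ and the tube is $(X)_P$. By Definition~\ref{definition:analyti-cohomology} the analytic cohomology of $X$ with respect to $P$ is the hypercohomology $R\Gamma((X)_P,\Omega^\bullet_{(X)_P})$ of the weakly completed de~Rham complex on the dagger space $(X)_P$. Since the specialization map is a morphism of topoi $\mathrm{sp}\colon (X)_P^\sim \to X^\sim_{\text{Zar}}$ and $R\Gamma((X)_P,-) = R\Gamma(X,-)\circ R\mathrm{sp}_\ast$, I would first write
\[
R\Gamma((X)_P,\Omega^\bullet_{(X)_P}) = R\Gamma\bigl(X, R\mathrm{sp}_\ast\Omega^\bullet_{(X)_P}\bigr).
\]
Each entry $\Omega^i_{(X)_P}$ is coherent, so Lemma~\ref{lemma:good-sheaf-gamma-acyclic}(2) gives $R\mathrm{sp}_\ast\Omega^i_{(X)_P} = \mathrm{sp}_\ast\Omega^i_{(X)_P}$; as a bounded-below complex of $\mathrm{sp}_\ast$-acyclic sheaves computes its derived pushforward, this yields $R\mathrm{sp}_\ast\Omega^\bullet_{(X)_P} = \mathrm{sp}_\ast\Omega^\bullet_{(X)_P}$.

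Next I would transport this to the Monsky--Washnitzer topos through the complex $\mathrm{dR}_P^\bullet$ of~\ref{situation:construction-differential}, whose entries are $j_{P\ast}\Omega_P^i = j_{P\ast}\varphi_P^\ast\Omega^{i,\dagger}_{P/V}$. Lemma~\ref{lemma:zariski-image-de-rham-complex} identifies $u_{X/V,\ast}(\mathrm{dR}_P^\bullet)$ with $\mathrm{sp}_\ast\Omega^\bullet_{(X)_P}$ as complexes of Zariski sheaves on $X$. To make this derived, I note $\Omega^{i,\dagger}_{P/V}$ is coherent on $P$, so Lemma~\ref{lemma:acyclic-jp-and-u} shows each $j_{P\ast}\Omega_P^i$ is $u_{X/V}$-acyclic; hence $Ru_{X/V,\ast}\mathrm{dR}_P^\bullet = u_{X/V,\ast}\mathrm{dR}_P^\bullet = \mathrm{sp}_\ast\Omega^\bullet_{(X)_P}$. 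Combining with the first paragraph and the Leray spectral sequence for $u_{X/V}$ gives
\[
R\Gamma((X)_P,\Omega^\bullet_{(X)_P}) = R\Gamma\bigl(X, Ru_{X/V,\ast}\mathrm{dR}_P^\bullet\bigr) = R\Gamma\bigl((X/V)_{\text{MW}}, \mathrm{dR}_P^\bullet\bigr).
\]

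Finally, the Poincaré lemma, Lemma~\ref{lemma:poincarelemma}, furnishes a quasi-isomorphism $\mathcal{O}_{X/V}^{\text{an}}\to \mathrm{dR}_P^\bullet$ of complexes on $(X/V)_{\text{MW}}$, whence
\[
R\Gamma\bigl((X/V)_{\text{MW}},\mathrm{dR}_P^\bullet\bigr) = R\Gamma\bigl((X/V)_{\text{MW}},\mathcal{O}_{X/V}^{\text{an}}\bigr) = \mathrm{H}^\bullet_{\mathrm{MW}}(X/K),
\]
the last equality being Example~\ref{example:sheaves-on-mw}(ii). Reading the chain backwards identifies the analytic cohomology with the Monsky--Washnitzer cohomology, as claimed.

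The genuinely hard content is already packaged in the lemmas: the $\mathrm{sp}$-acyclicity of coherent sheaves rests on the dagger-space Theorem~B of Bambozzi (Lemma~\ref{lemma:good-sheaf-gamma-acyclic}), and the vanishing of $R^m j_{P\ast}$ reduces to the acyclicity of the open polydisk (Lemma~\ref{lemma:acyclic-jp-and-u}). At the level of the corollary the only point requiring care is the passage from \emph{termwise} acyclicity to acyclicity of the entire de~Rham complex, i.e.\ that a complex of $\mathrm{sp}_\ast$- (resp.\ $u_{X/V}$-) acyclic sheaves computes the derived pushforward with no residual hypercohomology contributions. This is standard homological algebra once the termwise vanishing is in hand, so I expect the main effort to lie in keeping the several Leray and acyclicity steps correctly aligned rather than in any single hard estimate.
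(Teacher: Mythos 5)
Your proposal is correct and follows essentially the same route as the paper's proof: termwise $\mathrm{sp}_\ast$-acyclicity via Lemma~\ref{lemma:good-sheaf-gamma-acyclic}, the identification of Lemma~\ref{lemma:zariski-image-de-rham-complex}, the $u_{X/V}$-acyclicity of Lemma~\ref{lemma:acyclic-jp-and-u}, and the Poincar\'e lemma, assembled by the Leray spectral sequence exactly as in the overview of \S\ref{sec:org0ac363b}. Your version merely spells out the standard homological-algebra steps (passing from termwise acyclicity to the hypercohomology identification) that the paper leaves implicit.
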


\begin{proof}
By Lemma~\ref{lemma:acyclic-jp-and-u},
\(u_{X/V,\ast}\) is acyclic with respect to sheaves
\(j_{P\ast}\Omega_P^{i}\), thereby
\begin{equation*}
Ru_{X/V,\ast} \mathrm{dR}_{P} = u_{X/V,\ast}\mathrm{dR}_{P}.
\end{equation*}
By the Poincaré lemma~\ref{lemma:poincarelemma},
the Zariski cohomology of the left hand side equals the Monsky--Washnitzer
cohomology; by Lemma~\ref{lemma:good-sheaf-gamma-acyclic}  and
Lemma~\ref{lemma:zariski-image-de-rham-complex} the Zariski cohomology of
the right hand side  equals the analytic cohomology.
\end{proof}

\bibliographystyle{alpha}
\bibliography{/home/dzhang/personal/OK/newbib.bib}
\end{document}